\newcommand{\algostep}[1]{{\footnotesize\textup{#1:}}\xspace}
\definecolor{Gray}{gray}{0.9}
\newcommand{\ds}{\displaystyle}
\newcommand{\until}[1]{\{1,\dots, #1\}}
\newcommand{\subscr}[2]{#1_{\textup{#2}}}
\newcommand{\supscr}[2]{#1^{\textup{#2}}} \newcommand{\setdef}[2]{\{#1
  \; | \; #2\}} 
\newcommand{\bigsetdef}[2]{\big\{#1 \; | \; #2\big\}}
\newcommand{\Bigsetdef}[2]{\Big\{#1 \; \big| \; #2\Big\}}
\newcommand{\map}[3]{#1: #2 \rightarrow #3}
\newcommand{\mcA}{\mathcal{A}}
\newcommand{\mcE}{\mathcal{E}}
\newcommand{\mcH}{\mathcal{H}}
\newcommand{\Bt}{B^\top}
\newcommand{\norm}[2]{\left\|#1\right\|_{#2}}
\newcommand{\pactive}{\subscr{p}{active}}
\renewcommand{\pactive}{\subscr{p}{sd}}
\newcommand{\wpactive}{\subscr{\widehat{p}}{sd}}
\newcommand\oprocendsymbol{\hbox{$\triangle$}}
\newcommand\oprocend{\relax\ifmmode\else\unskip\hfill\fi\oprocendsymbol}
\newcommand{\zeropi}{[0,\pi)}
\newcommand{\subscript}[2]{$#1 _ #2$}
\DeclareSymbolFont{bbold}{U}{bbold}{m}{n}
\DeclareSymbolFontAlphabet{\mathbbold}{bbold}
\newcommand{\vect}[1]{\mathbbold{#1}}
\newcommand{\vectorzeros}[1][]{\vect{0}_{#1}}
\newtheorem{problem}{Problem}
\newcommand{\scirc}{\raise1pt\hbox{$\,\scriptstyle\circ\,$}}
\newcommand{\real}{\mathbb{R}}
\newcommand{\Scircle}{\mathbb{T}^1}
\newcommand{\complex}{\mathbb{C}}
\newcommand{\integer}{\mathbb{Z}}
\newcommand{\torus}{\mathbb{T}} 
 \newcommand{\closure}{\mathrm{closure}}
 \newcommand{\imagunit}{\mathrm{i}}
\newcommand{\prjcyc}{\mathcal{P}_{L_{\min}}}
\DeclareMathOperator{\diag}{diag}
\DeclareMathOperator{\Ker}{\mathrm{Ker}}
\DeclareMathOperator{\Img}{\mathrm{Img}}
\definecolor{Gray}{gray}{0.9}
\newcommand{\zeropiovertwo}{\ensuremath{[0,\tfrac{\pi}{2})}}
\newcommand{\myclearpage}{\clearpage}
\renewcommand{\myclearpage}{}
\title{Flow and Elastic Networks on the $n$-torus: \\ Geometry, Analysis,
  and Computation\thanks{This work was supported in part by the Solar
    Energy Technologies Office of the U.S.~Department of Energy under
    Contract No.~DE-EE0000-1583 and by the Defense Threat Reduction Agency
    under Contract No.~HDTRA1-19-1-0017. Figures 2--9 and 12
    are licensed under Creative Commons Attribution-ShareAlike 4.0
    International License (CC BY-SA 4.0) and are reproduced 
    from~\cite{FB:20}. }}
\author{Saber Jafarpour\thanks{Center for Control, Dynamical Systems, and
    Computation and Department of Mechanical Engineering, University of
    California, Santa Barbara (\email{saber, eyhuang, kevinsmith,
      bullo@ucsb.edu}).}  \and Elizabeth Y.\ Huang$^{\dagger}$ \and Kevin
  D.\ Smith$^{\dagger}$ \and Francesco Bullo$^{\dagger}$.}
\begin{document}

\maketitle
\tableofcontents

\begin{abstract} 
  Networks with phase-valued nodal variables are central in modeling
  several important societal and physical systems, including power grids,
  biological systems, and coupled oscillator networks. One of the
  distinctive features of phase-valued networks is the existence of
  multiple operating conditions corresponding to critical points of an
  energy function or feasible flows of a balance equation. For
  {\color{black}networks} with phase-valued states, it is not yet fully
  understood how many operating conditions exist, how to characterize them,
  and how to compute them efficiently. A deeper understanding of feasible
  operating conditions, including their dependence upon network structures,
  may lead to more reliable and efficient network systems.
        
  This paper introduces flow and elastic network problems on the $n$-torus
  and provides a rigorous and comprehensive framework for their
  study. Based on a monotonicity assumption, this framework localizes the
  solutions, bounds their number, and leads to an algorithm to compute
  them. Our analysis is based on a novel \emph{winding partition} of the
  $n$-torus into winding cells, induced by \emph{Kirchhoff's angle law} for
  undirected graphs. The winding partition has several useful properties,
  including that each winding cell contains at most one solution. The
  proposed algorithm is based on a novel contraction mapping and is
  guaranteed to compute all solutions. Finally, we apply our results to
  numerically study the active power flow equations in several tests cases
  and estimate power transmission capacity and congestion of a power
  network.
\end{abstract}

\begin{keywords}
Network systems, graph theory, $n$-torus, cycle structure of a graph
\end{keywords}

\begin{AMS}
37C25, 93C10, 37C75, 37C65, 37N35
\end{AMS}

\myclearpage

\section{Introduction}
\label{sec:intro}
\subsection*{Problem description and motivation}

{\color{black} Complex networks are ubiquitous in the natural and engineered
  world, arising in such disparate fields as biology, sociology, and
  infrastructure systems.  In many networks of interest, the relevant
  quantities are represented as nodal and edge variables, and governing
  equations regulate their relationships and evolution. Nodal variables are
  typically real-valued and represent physical quantities, such as mass and
  density, or abstract quantities, such as information and
  opinions. Examples of engineered networks with real-valued nodal
  variables include water supply networks, gas distribution networks, and
  direct current (DC) power grids.  However, in some important
  applications, nodal variables are more accurately modeled as
  phases, i.e., points on the circle $\torus^1$. In these
  systems, the network state belongs to the $n$-torus $\torus^n$, instead
  of the Euclidean space $\real^n$; we refer to such systems as
  \emph{networks on the $n$-torus}.  Alternating-current (AC) power grids
  are well-known examples of networks on the $n$-torus, whereby power
  suppliers and consumers are the nodes and transmission lines are the
  edges.  The operating condition of an AC power grid is described by the voltage
  angle at each node (nodal variables) and the power flow along each
  transmission line (edge variables).  Other examples of networks on the
  $n$-torus arise in the study of synchronization of coupled oscillators
  and collective motion of multi-agent systems.


  In networks on the $n$-torus, solutions of the governing equations may be
  fixed-points of an algebraic equation, equilibrium points of a dynamical
  system, or critical points of an optimization problem.
  One of the distinctive behaviors of the network governing equations is
  the coexistence of multiple solutions.
  This behavior is observed in the existence of multiple stable operating
  conditions in AC power grids~\cite{TC-RD-IA-PJ:16}, diverse
  frequency-synchronized states in coupled oscillators~\cite{DM-MT-DW:17},
  and rich spatial patterns of motion in engineered and biological
  networks~\cite{NEL-DP-FL-RS-DMF-RD:07}.
  Formally, determining the solutions to governing equations is often
  equivalent to computing zeros of continuous maps from $\torus^n$ to
  $\real^n$. And, indeed, the Poincar\'e\textendash{}Hopf
  Theorem~\cite{JWM:97} forbids the uniqueness of such zeros.
  In all applications of networks on the $n$-torus, a comprehensive
  understanding of the multiplicity of solutions is critical for
  monitoring, predicting, and controlling the network system.
  

  For systems with real-valued nodal variables, it is well-known that
  solutions to the network governing equations are related to structural properties of 
  the network, and algebraic graph theory provides powerful
  tools for analyzing this relationship.  However, for networks on the
  $n$-torus, the geometry of the ambient space complicates the connection
  between the structure of the network and its behavior.  Despite numerous
  efforts in various scientific disciplines, many fundamental questions are
  not yet well understood: How many solutions exist? How can they be
  localized and computed? What is the role of the network structure?  In
  this paper, we address these problems by developing a novel algebraic
  graph theory on the $n$-torus.}

\subsection*{Flow and elastic networks on the $n$-torus}
{\color{black}We introduce two classes of networks on the $n$-torus, namely
  \emph{flow networks} and \emph{elastic networks}, motivated by important
  example systems. We start with some notation. A network is described by a
  weighted undirected graph $G$ with node set $\{1,\ldots,n\}$, edge set
  $\mathcal{E}$ of cardinality $m$, and edge weights $a_{ij}\in
  \real_{>0}$, for $(i,j)\in \mathcal{E}$. Given an arbitrary orientation
  and ordering of the edges, let $B\in \real^{n\times m}$ denote the
  incidence matrix of $G$.  We let $\torus^1$ denote the unit circle
  and $\torus^n$ denote the $n$-torus. Given angles
  $\alpha,\beta\in\Scircle$, the geodesic distance between $\alpha$ and
  $\beta$, denoted by $|\alpha-\beta|$, is the length of the shortest arc
  in $\Scircle$ connecting $\alpha$ to $\beta$. If we identify $\Scircle
  \simeq [-\pi,\pi)$, the counterclockwise difference
    $\subscr{d}{cc}(\alpha,\beta)$ is defined by
    $\subscr{d}{cc}(\alpha,\beta) =\mathrm{mod}((\beta - \alpha) , 2\pi)-
    \pi$. By abuse of notation, we write $\alpha-\beta$ to refer to
    $\subscr{d}{cc}(\alpha,\beta)$. $\vect{1}_n$ and $\vect{0}_n$ are the
    all-one and all-zero column vectors of length $n$, respectively, and
    $\vect{1}_n^{\perp}$ is the subspace of $\real^n$ consisting of vectors
    perpendicular to $\vect{1}_n$.}

A flow network is characterized by a commodity being exchanged between
adjacent nodes. The direction and magnitude of this flow is a function
of the phase difference between adjacent nodes. Moreover, flows
satisfy a balance equation asserting that, {\color{black} at each node,
  the total outflow (resp.\ inflow) must equal the supply (resp.\
  demand) of commodity}. The solutions of a flow network problem
reveal the directions and magnitudes of commodity flows within the
network, given the {\color{black} external supply and demand} at each
node.

\begin{problem}[Flow networks on the $n$-torus]\label{sinflowproblem}
  A \emph{flow network on the $n$-torus} is a pair $(G,\{h_e\}_{e\in
    \mathcal{E}})$, where $G$ is a weighted undirected graph
  {\color{black}with $n$ nodes} and, for every $e\in \mathcal{E}$,
  $\map{h_e}{\real}{\real}$ is a continuously differentiable
  $2\pi$-periodic odd function, called the \emph{flow function} on edge
  $e$.
  
  Consider a flow network on the $n$-torus $(G,\{h_e\}_{e\in
    \mathcal{E}})$, a balanced supply/demand vector $\pactive\in
  \vect{1}^{\perp}_n$, and an angle $\gamma\in\zeropi$. Then the \emph{flow
    network problem on the $n$-torus} for $(G,\{h_e\}_{e\in
    \mathcal{E}},\pactive,\gamma)$ is to compute pairs of flows and phase
  angles $(f,\theta)\in \real^m\times\torus^n$ such that
  \begin{subequations} \label{eq:flow_form}
    \begin{align}
        &B f = \pactive, \label{eq:f-KCL}\\
        &f_e = a_{ij} h_e(\theta_i-\theta_j),\qquad\mbox{ for }e=(i,j)\in \mathcal{E} \label{eq:f-physics}
      \\
        &|\theta_i-\theta_j| \le \gamma, \qquad\quad\qquad\mbox{ for } (i,j)\in \mathcal{E}.\label{eq:f-constraints}
    \end{align}
  \end{subequations}
\end{problem}
These notions {\color{black} are illustrated in Figure~\ref{fig:apf} and
  interpreted as follows}.  Equation~\eqref{eq:f-KCL} is the \emph{flow
  balance equation} for a commodity flowing on a network, similar to a
Kirchhoff's current conservation law, and is illustrated in
Figure~\ref{fig:apfpConservation}.  Equation~\eqref{eq:f-physics} is the
\emph{flow equation} and codifies how the flow along an edge is a function
of the counterclockwise difference between the angular variables at the two
nodes; such a function is illustrated in Figure~\ref{fig:apfpPowerAngle}.
The oddness of the flow functions encodes the physical property that, on
every edge, the flow equations are independent of the flow direction.
Finally, the inequality~\eqref{eq:f-constraints} is the \emph{angle
  constraint}, often relevant in applications.

\begin{figure}[ht]
  \begin{center}
    \subfloat[\label{fig:apfpConservation}]
             {\includegraphics[width=.325\textwidth]{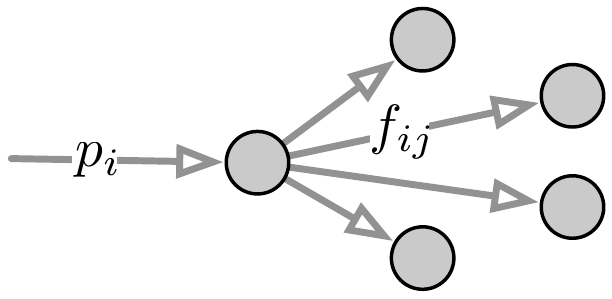}}
             \quad\hfil
             \subfloat[\label{fig:apfpPowerAngle}]
                      {\includegraphics[width=.44\textwidth]{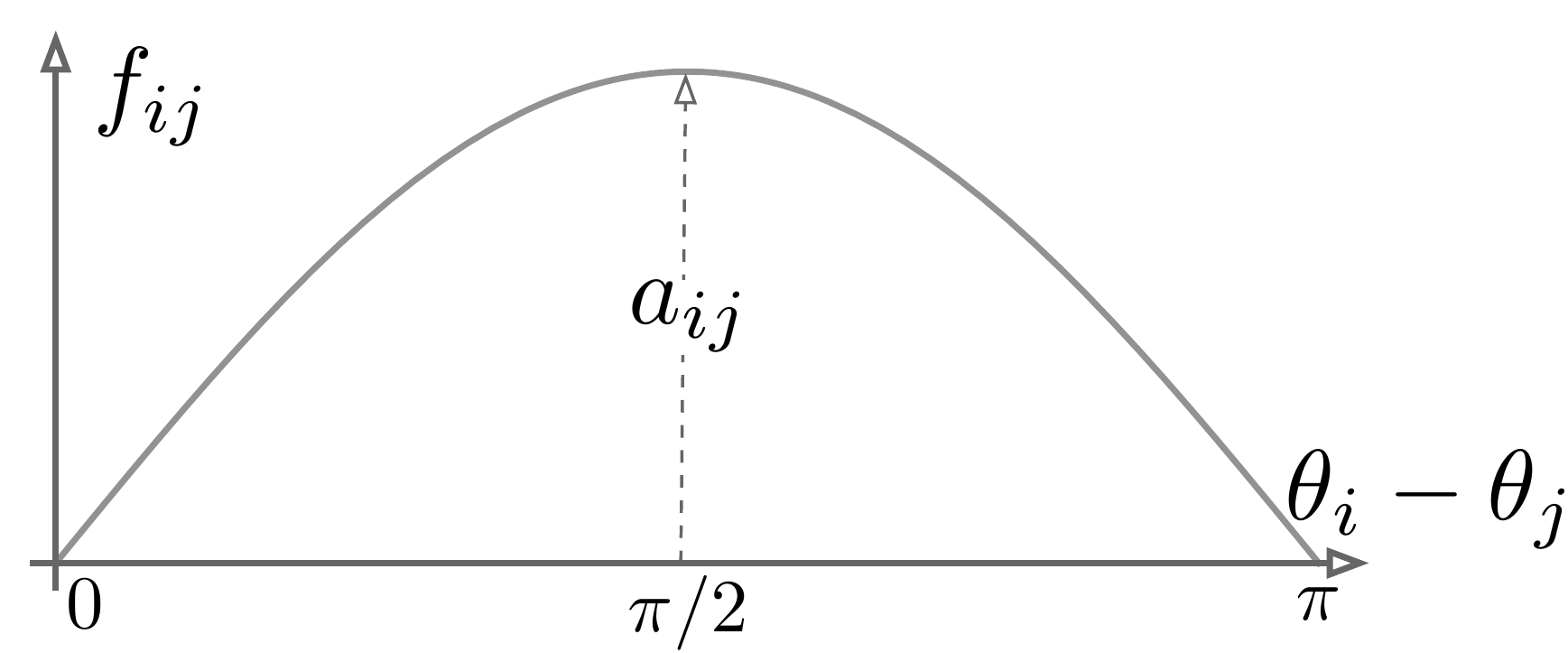}}
  \end{center}
  \caption{Interpretation of the flow network problem~\eqref{eq:flow_form}
    for the networks where the flow function has a sine
    form. Figure~\ref{fig:apfpConservation} illustrates that the flow
    satisfies a conservation law at each node: $\ds p_i =
    \sum\nolimits_{j=1}^nf_{ij}$. Figure~\ref{fig:apfpPowerAngle} shows the
    proportional relationship between the flow transferred along each edge
    and the sine of its angle difference.}
\label{fig:apf}
\end{figure}

Flow networks on the $n$-torus arises in chemical
oscillations~\cite{JCN:79,YK:84}, where the commodity is mass.  They
also arise in models of AC power networks~\cite{ARB-DJH:81},
droop-controlled inverters in microgrids~\cite{JWSP-FD-FB:12u}, and
networks of pacemaker cells in the heart~\cite{DCM-EPM-JJ:87}, where
the commodity is electricity.  Models of brain
networks~\cite{FV-JPL-ER-JM:01}, deep brain stimulation~\cite{PAT:03},
and social influence systems~\cite{AP-VL-AR:05} can also take the form
of a flow network on the $n$-torus, with commodities like information
and opinions.

{\color{black} An elastic networks is characterized by an interaction
  energy between adjacent nodes.} The energy of an edge is a function
of the phase difference between the adjacent nodes and these
interaction energies give rise to forces on nodes.  When an elastic
network is in steady-state, the forces arising from these potential
energies must cancel out with external forces, resulting in a force
balance equation. The solutions of an elastic network reveal the
possible steady-state energies and forces along edges, given external
forces at each node. 

\begin{problem}[Elastic networks on the $n$-torus]\label{torqueproblem}
  An \emph{elastic network on the $n$-torus} is a pair $(G,\{H_e\}_{e\in
    \mathcal{E}})$, where $G$ is a weighted undirected graph
  {\color{black}with $n$ nodes} and, for every edge $e\in \mathcal{E}$,
  $\map{H_e}{\real}{\real}$ is a twice-differentiable $2\pi$-periodic even
  function called the \emph{elastic energy function} of edge $e$. The
  elastic energy of the elastic network is
  \begin{equation}\label{eq:energy_function}
    \mcH(\theta) = \sum_{e=(i, j) \in \mathcal E} a_{ij} H_e(\theta_i -
    \theta_j).
  \end{equation}
  Consider an elastic network $(G,\{H_e\}_{e\in \mathcal{E}})$, a balanced
  torque vector $\tau\in \vect{1}^{\perp}_n$, and an angle
  $\gamma\in\zeropi$. Then the \emph{elastic network problem on the
    $n$-torus} for $(G,\{H_e\}_{e\in\mathcal{E}},\tau,\gamma)$ is to
  compute the phase angles $\theta \in \torus^n$ such that
  \begin{subequations} \label{eq:elastic_form}
    \begin{align}
      &\tau = \nabla_{\theta}\mcH(\theta) , \label{eq:f-elastic} \\
      &|\theta_i-\theta_j| \le \gamma,\qquad\mbox{for } (i,j)\in \mathcal{E}. \label{eq:force-constraints}
    \end{align}
  \end{subequations}   
\end{problem}
These notions are illustrated in Figure~\eqref{fig:springnetwork} and
interpreted as follows. For every edge $e$, the elastic energy function
$H_e$ codifies how the elastic energy in the edge $e$ is a function of the
counterclockwise difference between the angular variables at its end
points. Equation~\eqref{eq:f-elastic} is the \emph{torque balance
  equation} for torques applied to a node in the network and is similar to
conservation of momentum. Finally, the
inequality~\eqref{eq:force-constraints} is the \emph{angle constraint},
often relevant in applications.

\begin{wrapfigure}{l}{0.4\textwidth}
\centering
\includegraphics[width=0.33\textwidth]{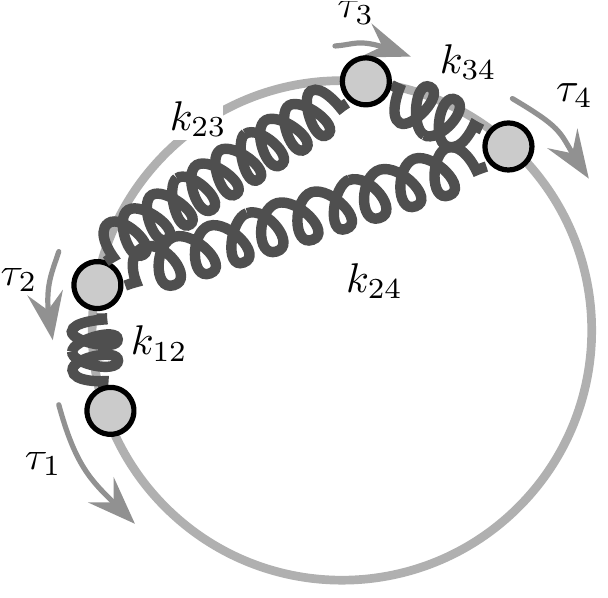}
\caption{Interpretation of the elastic network
  problem~\eqref{eq:elastic_form} as a spring network on a ring. Each
  particle on the ring is subject to an external torque, a damping torque,
  and is interconnected by ideal elastic springs to other
  particles.}\label{fig:springnetwork}\vspace{-1.0cm}
\end{wrapfigure}
Elastic networks on the $n$-torus are present in many engineering
systems, including vehicle
coordination~\cite{DAP-NEL-RS-DG-JKP:07,RS-DP-NEL:07}, nullforming in
wireless networks~\cite{AK-RM-SD-MMR-DRB-UM-TPB:15}, and
nanoelectromechanical oscillator
networks~\cite{MM-JE-WF-AC-AS-MR-JL-MHB-MP-LDO-MM-JPC-MCC-RMD-MLR:19}.
They arise in physics in the context of solid-state
circuits~\cite{AM-MEH-RB-SC-AAA:07}, spin glass models~\cite{HD:92},
and Josephson junctions in quantum mechanics~\cite{KW-PC-SHS:98}. They
can also model collective behavior in biological
networks~\cite{TV-AC-EBJ-IC-OS:95}.

Elastic networks can also arise from certain network optimization problems on
the $n$-torus, in which a cost function $\map{\mcH}{\torus^n}{\real}$ can be decomposed into the sum of edge
costs $\map{H_e}{\torus^n}{\real}$, for $e\in \mathcal{E}$. Then it is easy
to see that the optimization problem
\begin{align}\label{eq:optimization_general}
  \begin{split}
    &\min \qquad \mcH(\theta) = \sum_{(i,j)\in \mathcal{E}}H_e(\theta_i-\theta_j), \\
    &\mbox{subject to: }\qquad\theta \in \torus^n.
  \end{split}
\end{align}
gives rise to an elastic network on the $n$-torus with elastic energy
$\mcH$. The elastic network problem, for the torque vector
$\tau=\vect{0}_n$, is equivalent to finding critical points of the elastic
energy function $\mcH(\theta)$ satisfying the constraint
$|\theta_i-\theta_j|\le \gamma$.

{\color{black}
Having introduced flow and elastic networks on the $n$-torus, we now elaborate
on several research questions that naturally arise for these systems.
Given a flow or elastic network on the $n$-torus,
\begin{enumerate}[leftmargin=2.5em,
    label=\subscript{Q}{{\arabic*}}:,ref=(\subscript{Q}{{\arabic*}})]
\item\label{Q1-how-many-eqs} does a solution exist and, if so, is it
  unique?  Alternatively, how do we enumerate the solutions as a function
  of all system parameters?
\item\label{Q2-localize-eqs} how can the solutions be distinguished and
  localized on the $n$-torus?
\item\label{Q3-compute-eqs} can we design an algorithm that is guaranteed to
  compute all solutions?
\end{enumerate}
Additionally, in many network problems, the edge variables themselves are
at least as interesting as the underlying nodal phases.  Indeed, edge
variables describe the flows of the commodity and the elastic energy among
the network nodes.  Therefore, we will consider the following additional
questions:
\begin{enumerate}[leftmargin=2.5em,
    label=\subscript{ Q}{{\arabic*}}:,ref=(\subscript{Q}{{\arabic*}})]
  \setcounter{enumi}{3}
\item\label{Q4-loop-flows-at-eq} how do different solutions lead to
  different {\color{black}patterns of edge variables}?
\item\label{Q6-capacity} what is the maximum {\color{black} flow or elastic
  energy in a given edge} as a function of network parameters?
\end{enumerate} }

\subsection*{Applications}

{\color{black} We here present three applications of flow and elastic
  networks on the $n$-torus and briefly discuss the importance of
  questions~\ref{Q1-how-many-eqs}-\ref{Q6-capacity} in their study.}

\paragraph{Active power flow equations}{\color{black}In the
  context of AC power grids with constant voltage magnitudes, the active
  power flow equation for lossless AC circuits is
  \begin{align}\label{eq:active-power-flow}
    p_i = \sum_{j=1}^{n} Y_{ij}V_iV_j\sin(\theta_i-\theta_j),
\end{align}
where $p_i,V_i, \theta_i$ are active power supply/demand, voltage magnitude
and voltage phase at node $i$, respectively, and $Y_{ij}$ is the line
susceptance at edge $(i,j)\in \mathcal{E}$. In practice, the power transmitted on a power
line is limited by thermal constraints.  The angular difference
$\theta_i-\theta_j$ is called the power angle of the line $(i,j)$, and the
line thermal constraints can be expressed as
\begin{align}\label{eq:thermal_constraints}
  |\theta_i-\theta_j|\le \gamma,\qquad\mbox{ for all } (i,j)\in \mathcal{E},
\end{align}
for some maximum power angle $\gamma\in\zeropiovertwo$. Notably, the active
power flow equation~\eqref{eq:active-power-flow} together with the thermal
constraints~\eqref{eq:thermal_constraints} are precisely of the
form~\eqref{eq:f-KCL}-\eqref{eq:f-constraints} with $h_{e}(y)=\sin(y)$ on
each edge $e$. 

An important feature in AC power grids is the existence of
multistable operating points in the network corresponding to solutions of
active power flow equation~\eqref{eq:active-power-flow}.
It is well-known that the transition between these distinct stable
operating points is sometimes accompanied by large, undesirable changes in
the circulating power flows~\cite{TC-RD-IA-PJ:16}. Circulating flows do not
deliver usable power to customers, increase power losses and line
congestion, and might cause pricing volatility. The Lake Erie loop, which
consists of roughly $1000$ miles of transmission lines through Ontario and
five American states, presents a classic example of the challenge that
circulating power flows present to power grid stability~\cite{NERC-04}. In
order to prevent large failures and blackouts caused by the circulant
flows, a theoretical understanding of different operating points of the
system and their loop flows is essential.}

\paragraph{Collective motion in engineering and biology}
{\color{black} An insightful example of elastic networks on the $n$-torus
  arises in the study of collective motion in biological and engineering
  networks. Consider a group of $n$ agents moving with unit speed on a
  two-dimensional plane with the pairwise communication and sensing
  patterns between the agents given by the edge set $\mathcal{E}$.  The
  motion of each agent $k$ is described by its position $r_k = x_k +
  \imagunit y_k\in \complex\simeq\real^2$, its heading angle $\theta_k\in
  \Scircle$, and a steering control law $u_k = u_k(r,\theta)$ depending on
  the position and heading angles of the other agents in the
  group. Formally, the motion of agent $k$ is given by:
  \begin{align}
    \label{eq:motion-bio-eng}
    \dot{r}_k = e^{\imagunit\theta_k},\qquad \dot{\theta}_k = u_k(r,\theta).
  \end{align}
For a network of agents described by the model~\eqref{eq:motion-bio-eng}, a
well-studied form of collective motion is \emph{flocking}, whereby
particles asymptotically reach consensus on their heading angles. A second
important form of collective motion is the so-called \emph{circular motion}
(or \emph{vortex motion}). In circular motion, agents exhibit periodic
movements around a circle, where their headings are different vertices on
the circle~\cite{NEL-DP-FL-RS-DMF-RD:07}. One can define the elastic energy
using the so-called~\cite{DAP-NEL-RS-DG-JKP:07} \emph{spacing potential} on
the $n$-torus:
\begin{align}\label{eq:spacing_potential}
  \mcH(\theta) =\sum_{(k,j)\in \mathcal{E}} (1-\cos(\theta_k-\theta_j)).
\end{align}
The spacing potential has several critical points and its global maximum
and minimum corresponds to the circular motion and flocking,
respectively~\cite{NEL-DP-FL-RS-DMF-RD:07}. Using a gradient flow controller
$u_k = \omega_0 -\frac{\partial \mcH(\theta)}{\partial \theta_k}$ which depends on a scalar $\omega_0\in\real$, the network of particles~\eqref{eq:motion-bio-eng} can be considered an elastic network on the $n$-torus
where the elastic energy at each edge $(k,j)$ is given by
$1-\cos(\theta_k-\theta_j)$. In engineering networks, the energy function
approach can be used to design more complicated patterns of collective
motion. The idea is to utilize appropriate stabilizing control laws to
steer the system to desired critical points of the energy
function~\cite{NEL-DP-FL-RS-DMF-RD:07}. However, existence of other stable
critical points for the energy function and their locations have a large
effect on transient and steady-state behavior of the closed-loop
system. This motivates a comprehensive study of the critical points of
various energy functions on the $n$-torus. }

\paragraph{Coupled oscillators and associative memory networks}
{\color{black} Synchronization in networks of coupled oscillators is an
  emerging phenomenon with important applications in biology, physics, and
  engineering. The celebrated Kuramoto model is one of the simplest
  coupled-oscillator model exhibiting synchronization. In this model, each
  oscillator has a phase $\theta_i\in \Scircle$ and a natural frequency
  $\omega_i\in \real$; the interactions between oscillators are described
  by a weighted undirected graph $G$, whose edge weights $a_{ij}$ denote
  the coupling strength between oscillators $i$ and $j$. The dynamics of
  the $i$th oscillator is given by:
\begin{align}\label{eq:kuramoto-assoc}
 \dot{\theta}_i = \omega_i - \sum_{j=1}^{n} a_{ij}\sin(\theta_i-\theta_j)
\end{align}
Simple calculations show that (i) synchronous trajectories (i.e.,
trajectories with identical frequencies) of the Kuramoto model are
precisely solutions to a flow network
problem~\eqref{eq:f-KCL}-\eqref{eq:f-physics} with sinusoidal flow
functions and that (ii) solutions satisfying the
constraint~\eqref{eq:f-constraints} for $\gamma<\pi/2$ are locally
exponentially stable~\cite[Lemma 2]{FD-MC-FB:11v-pnas}. One of the
applications of coupled oscillators is associative memory
networks. Associative memory is a type of content-addressable memory for
recognizing special patterns using partial information. In~\cite{JJH:1982},
Hopfield proposed to consider patterns of memory as dynamically stable
attractors. Motivated by this idea, the Kuramoto model and its
generalizations together with a Hebb’s learning rule for the couplings have
been used to design associative memory
networks~\cite{FCH-EMI:00,TN-YCL-FCH:04}. In these networks, binary
patterns of memory are encoded as stable frequency-synchronized solutions
of the coupled oscillator network~\cite{FCH-EMI:00,TN-YCL-FCH:04}. For
these models of associative memory, the performance is measured by the number and the size of
basin of attraction of stable frequency-synchronized states~\cite{TN-YCL-FCH:04}. Therefore, to study the performance of these
associative memory networks, it is essential to develop a theoretical
framework for computing the stable frequency synchronized solutions of
coupled-oscillator networks.  }

\subsection*{Relevant literature}{\color{black}Numerous research works
  have studied the multiplicity of solutions of networks on the $n$-torus
  and its connections with the structural properties of the network. In the
  physics community, much effort has focused on studying the equilibrium
  points for the Kuramoto coupled-oscillator model and its
  generalizations. To our knowledge, \cite{GBE:85(b)} is the first paper
  that studies existence of multi-stable equilibria in a ring of Kuramoto
  oscillators. The phenomenon of multistability has been further explored
  in the generalized Kuramoto model with small world graph~\cite{GSM:14}
  and with Cayley graphs~\cite{GSM-XT:15}. An algorithm for computing the
  multi-stable equilibria of ring network of coupled oscillators is
  proposed by~\cite{JAR-DA:04}. For Kuramoto coupled oscillators with zero
  natural frequencies, the uniqueness of the zero stable equilibrium point
  for dense graph has been studied in~\cite{RT:12,SL-RX-ASB:19}. The region
  of attraction of multistable equilibrium points on a $k$-nearest neighbor
  graphs is studied in~\cite{DAW-SHS-MG:06}. For Kuramoto model with second
  order couplings, \cite{TN-YCL-FCH:04} studies the stability and basin of
  attraction of equilibrium points. In~\cite{DM-MT-DW:17}, the notion of
  winding vector for an arbitrary graph is introduced as the sum of the
  phase differences along a cycle basis. Moreover, it is shown that for a
  planar graph, there is a one-to-one correspondence between the winding
  vectors and the equilibrium points of the Kuramoto model with phase
  differences less than $\pi/2$~\cite[Lemma~3]{DM-MT-DW:17}. The
  analysis in~\cite{DM-MT-DW:17}
  also provides upper and lower bounds on the number of stable equilibrium
  points of Kuramoto coupled oscillators with planar
  topology. \cite{JB-CIB:82} uses Morse Theory to provide an upper bound on
  the number of stable equilibrium points of the Kuramoto
  model. \cite{TF:18} provide an asymptotic estimate on the number of
  equilibrium points of the Kuramoto model by identifying the equilibrium
  points with suitable lattice points. Numerical algorithms based on
  homotopy continuation and algebraic geometry have been developed
  in~\cite{DM-NSD-FD:15,OC-JDH-HH-DKM:18} to compute the equilibrium points
  of the Kuramoto model.



  In the power system community, multiplicity of solutions to active power
  flow equations has been studied intensively. \cite{AJK:72} is the first
  paper to rigorously study multistable solutions of the active power flow
  equation and their associated loop flows. Several
  papers~\cite{TC-RD-IA-PJ:16,RD-TC-PJ:16,NJ-AK:03} have investigated
  mechanisms that create large loop flows and have shown that loop flows
  persist even when the network returns to its normal operating
  condition. In~\cite{TC-RD-IA-PJ:16}, the basins of attraction for loop
  flows has been studied using Lyapunov theory. The
  papers~\cite{TC-RD-IA-PJ:16,RD-TC-PJ:16,NJ-AK:03} acknowledge the
  connection between loop flows and winding numbers of the cycles in the
  graph. The holomorphic embedding load-flow method (HELM) is proposed
  in~\cite{AT:12} to find all solutions of power flow equations. Although
  HELM is guaranteed to find the operable solution of the power flow
  equations, it is reported to be much slower than the
  Newton\textendash{}Raphson
  methods~\cite{SR-YF-DJT-MKS:16}. In~\cite{WM-JST:93}, an algorithm based
  on a topological continuation argument is developed to solve the power
  flow problem. However, a counterexample for this method using a five-bus
  system has been constructed in~\cite{DKM-BCL-HC:13}. This continuation
  method is revisited and modified in~\cite{BL-DW:15}. Techniques for
  solving the optimal power flow problem (OPF) can also be used to solve
  the power flow problem. OPF problems have been studied extensively in the
  power network literature, e.g.,
  see~\cite{JAM-RA-MEE:99-1,JAM-RA-MEE:99-2,DKM-FD-HS-SHL-SC-RB-JL:17}. Unfortunately,
  due to the non-convex nature of the OPFs, these algorithms are not
  guaranteed to compute feasible solutions for arbitrary
  topologies~\cite{SHL:14}.
}

\subsection*{Contributions}

This paper provides a rigorous graph-theoretic and geometric framework to
analyze the solutions of flow and elastic network problems on the
$n$-torus.  Our framework provides comprehensive answers to
questions~\ref{Q2-localize-eqs} and~\ref{Q3-compute-eqs}, and it provides
novel insights into
questions~\ref{Q1-how-many-eqs},~\ref{Q4-loop-flows-at-eq},
and~\ref{Q6-capacity}.  Specifically, this paper provides the following
four main contributions:
\begin{enumerate}[leftmargin=3em]
	\item We introduce a unifying formalism to study network problems on the $n$-torus.
	\item We present a novel partition of the $n$-torus, called the \emph{winding 
	partition}, induced by cycles in the underlying graph.
	\item We prove that each winding cell contains at most a unique solution of the flow 
	network problem and elastic network problem.
	\item We present a complete search algorithm, which finds every solution to the flow 
	and elastic network problems on a suitable subset of the $n$-torus.
\end{enumerate} 
We next discuss these contributions in detail.

In this paper, we formally introduce flow and elastic networks on the
$n$-torus, and we state the problem of finding all solutions of these
networks.  While these problems have been studied in various
disciplines, we provide a novel, unifying framework to formalize
them. We review several applications of flow and elastic network
problems on the $n$-torus. {\color{black}In Section~\ref{sec:problems},
  we show that, by establishing a suitable correspondence between the
  supply/demand vectors and the torque vectors and between the flow
  functions and the elastic energy functions, the solutions of flow
  network problems and elastic network problems coincide.}

In Section~\ref{sec:winding-partition}, we provide a novel algebraic graph
theory on the $n$-torus.  We establish a fundamental property of angle
differences around cycles in graphs, which amounts to a \emph{Kirchhoff's
  angle law}, generalization of the classic voltage law.  {\color{black} We
  introduce the notion of winding vector as a map from the $n$-torus to a
  discrete set and thereby define a novel partition of the $n$-torus
  into \emph{winding cells}.}

In Section~\ref{sec:at-most-uniqueness}, we focus on flow network problems
with monotone flow functions over suitable domains.  We show that, in each
winding cell, there exists at most one solution for the flow network
problem.  This ``at-most uniqueness'' result shows that winding cells can
be used to localize the multiple solutions of the flow network problem,
thereby providing a graph-theoretic and geometric answer to
question~\ref{Q2-localize-eqs}.  This result also shows that the winding
partition is a key geometric {\color{black}concept} to study the flow
network problem.  Finally, this result allows us to upper bound the number
of solutions to the flow network problem (on an arbitrary graph), partially
answering question~\ref{Q1-how-many-eqs}. 
The rest of Section~\ref{sec:at-most-uniqueness} addresses 
question~\ref{Q4-loop-flows-at-eq} by examining ``circulating'' or ``loop'' flows around 
cycles in the network.
We establish a bijection between a solution's winding vector and the circulating flow 
associated with that solution {\color{black} for flow networks with
  arbitrary topology and arbitrary monotone flow functions. It is
  worth mentioning that this bijection has been studied in the
  literature for Kuramoto coupled-oscillator networks with the ring
  topology in~\cite{RD-TC-PJ:16} and with the planar
  topology in~\cite{DM-MT-DW:17}}.  
We show that, in a flow network with strictly increasing flow functions and a single 
cycle, the circulating flow around the cycle increases monotonically with respect to the 
winding number.

In Section~\ref{sec:complete-solver}, we recast the flow and elastic
network problems as a system of equations with explicit dependence on the
winding vector of the solution.  Unlike the flow network problem, this new
formulation has at most one solution.  We propose an iterative algorithm,
called the \textit{projection iteration}, that exploits these equations to
either solve the problem in a given winding cell, or determine that no such
solution exists.  Using this iteration, we propose a complete search
algorithm to find all solutions of flow and elastic network problems on the $n$-torus, as stated in
question~\ref{Q3-compute-eqs}. {\color{black}In
  Section~\ref{subsec:compcomp}, we provide a comprehensive analysis of the
  computational complexity of each component of this algorithm.} The power
systems literature contains many algorithms to find these solutions in the
context of active power flow solvers; however, convergence is only
guaranteed for some special graphs, whereas our algorithm provably
converges on arbitrary topologies.


Finally, in Section~\ref{sec:numerical-experiments}, we present
numerical experiments on special cases of the active power flow
equations.  We consider two different test cases: a 12-node ring and
the IEEE RTS 24 testcase.  First, we introduce a notion of
transmission capacity and a notion of network congestion.  We utilize
the projection iteration to numerically study the capacity and
congestion in a 12-node ring power grid under two different power
profiles.  Our numerical analysis shows that, under certain cases, an
increase in loop flow can lead to an increase in the network's power
transmission capacity.  For the IEEE RTS 24 testcase, we modify the
power supply/demand vector and find a solution for the active power
flow equations with non-zero winding vector. Moreover, we compare the time
efficiency of the projection iteration against the
Newton\textendash{}Raphson method for computing the solutions of the
active power flow equations.  The numerical analysis in this section
sheds some light on question~\ref{Q6-capacity}; however, a more
comprehensive answer is still the subject of future research.

\myclearpage

\section{Preliminary concepts and results}\label{sec:problems}

{\color{black} We start this section by introducing the notation that are essential for our framework in this paper.}

\subsection{Notation and mathematical conventions}
Let $\real$ and $\complex$ denote the set of real and complex numbers,
respectively. For every $x\in \real^n$ and every $r >0$, we define the open
disk $\mathrm{D}_{\infty}(\mathbf{x}, r)$ by
\begin{align*}
\mathrm{D}_{\infty}(\mathbf{x}, r) = \setdef{\mathbf{y}\in \real^n}{\|\mathbf{y} - \mathbf{x}\|_{\infty} < r }.
\end{align*}
For an interval $I\subseteq \real$, the scalar function $\map{f}{I}{\real}$
is monotone on $I$, if it is either strictly increasing or strictly
decreasing on $I$. For a matrix $A\in \real^{n\times m}$, the image of
$A$ is denoted by $\Img(A)$ and the kernel of $A$ is denoted by
$\Ker(A)$. For a positive-definite diagonal matrix $D\in
\real^{n\times n}$, the $D$-weighted vector-norm is 
\begin{align*}
  \|v\|_D = \|D^{\frac{1}{2}}v\|_2,\qquad\mbox{ for all } v\in \real^n,
\end{align*}
and the induced $D$-weighted matrix-norm is
\begin{align*}
  \|X\|_D = \|D^{\frac{1}{2}}X D^{\frac{-1}{2}}\|_2,\qquad\mbox{ for
  all } X\in \real^{n\times n}. 
  \end{align*}

\subsubsection*{Algebraic graph theory}
Let $G$ denote an undirected graph with nodes $\until{n}$ and edge set
$\mcE$ with cardinality $m$.  Given an arbitrary orientation and ordering
of the edges, let $B$ denote the corresponding incidence matrix defined
component-wise as $B_{kl}=1$ if node $k$ is the sink node of edge $l$ and
as $B_{kl} = -1$ if node $k$ is the source node of edge $l$; all other
elements are zero~\cite[\S~8.3]{CDG-GFR:01}. {\color{black}If $x\in\real^n$ is a
nodal variable} (with $x_i$
associated to the $i$th node), then the flow vector associated to $x$
is the edge variable $B^{\top}x\in\real^m$ (with $(B^{\top}x)_e=x_i-x_j$ associated to the
oriented edge $e=(i,j)$). A \emph{cycle} in $G$ is an ordered sequence of
three or more nodes such that there is an edge between every two
consecutive nodes, the first and last nodes are the same, and no other node
is repeated. An undirected graph $G$ is \emph{cyclic} if it contains at
least one cycle.

For each cycle $\sigma$ in $G$, {\color{black}the length of $\sigma$ is
  denoted by $n_{\sigma}$} and the \emph{signed cycle vector}
$v_{\sigma}\in \{-1,0,1\}^m$ is composed of entries
\begin{align*}
  \left(v_{\sigma}\right)_e = \begin{cases}
    +1, \quad \null &\mbox{ if the edge $e$ is traversed positively by $\sigma$},\\
    -1, &\mbox{ if the edge $e$ is traversed negatively by $\sigma$},\\
    0, &\mbox{otherwise}.
  \end{cases}
\end{align*}
for each $e \in \mathcal E$~\cite[\S~14.2]{CDG-GFR:01}. The \emph{cycle
  space} of $G$ is the {\color{black}subspace of $\real^m$ spanned by
  the signed cycle vectors of all cycles in $G$}; equivalently, the cycle
space is $\Ker(B)$~\cite[\S~14.2]{CDG-GFR:01}.  A set of cycles $\Sigma
=\{\sigma_1,\ldots,\sigma_{m-n+1}\}$ is a \emph{cycle basis} for $G$ if the
corresponding set of signed cycle vectors $\{v_{\sigma_1},\ldots,
v_{\sigma_{m-n+1}}\}$ is a basis for the cycle space. {\color{black} If
  $\Sigma =\{\sigma_1,\ldots,\sigma_{m-n+1}\}$ is a cycle basis for $G$,
  then the length of $\Sigma$ is $\sum_{i=1}^{m-n+1}n_{\sigma_i}$. A cycle
  basis with minimal length is called a \emph{minimum cycle basis}.}

Note that many authors in the graph theory literature use similar but \textit{discrete} 
definitions of cycle space and cycle basis. In \cite{JDH:87,TK-CL-KM-DM-RR-TU-KZ:09}, for 
example, a ``cycle'' is a subgraph where every vertex has even degree, and a ``cycle 
basis'' is a set of ``cycles'' that can generate any ``cycle'' by the logical XOR 
operation, so the ``cycle space'' is a vector space on the field $\text{GF}(2)$ (instead 
of $\real$). Fortunately, a straightforward consequence of \cite[Lemma 
2.4]{TK-CL-KM-DM-RR-TU-KZ:09} is that the signed cycle vectors corresponding to these 
discrete basis ``cycles'' are also a basis for $\Ker(B)$. Therefore, we can rely on this 
literature in future sections for the computation of cycle bases.

Given a connected undirected graph $G$ with cycle basis
$\Sigma=\{\sigma_1,\ldots,\sigma_{m-n+1}\}$, the \emph{cycle-edge matrix}
is
\begin{equation}
  \label{defeq:cycle-edge}
        {C}_{\Sigma} =
        \begin{bmatrix}v^{\top}_{\sigma_1} \\ \vdots \\ v^{\top}_{\sigma_{m-n+1}}\end{bmatrix} \in \real^{(m-n+1)\times m}.
      \end{equation}
      {\color{black} We refer to Lemma~\ref{thm:shift_property} for additional properties
        of the cycle-edge matrix}. Figure~\ref{fig:polygonal-graphs}
      shows some polygonal graphs with oriented edges and cycle bases.
      \begin{figure}[ht]\centering
        \includegraphics[width=.75\linewidth]{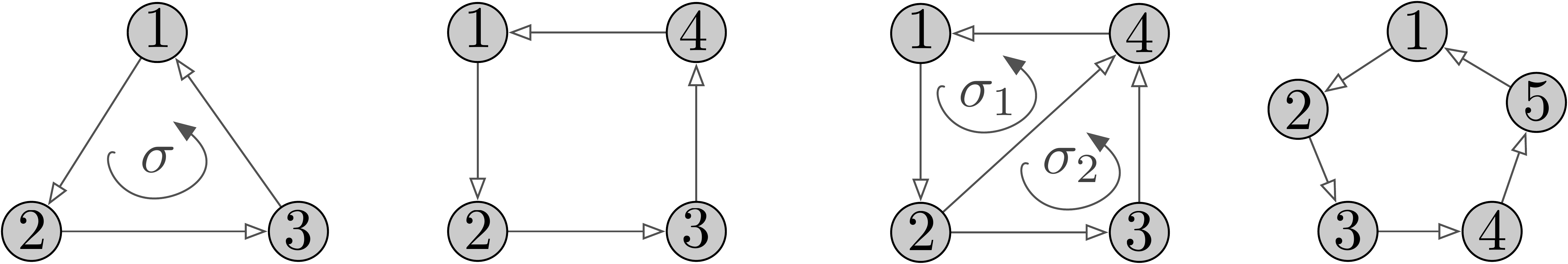}
        \caption{The triangle, the square, the square-with-diagonal,
          and the pentagon graph. The triangle and the
          square-with-diagonal graphs are drawn with a cycle basis.}
        \label{fig:polygonal-graphs}
      \end{figure}

Next, assume each edge $(i,j)$ has a weight $a_{ij}$ and define the
diagonal edge weight matrix $\mcA=\diag(a_{ij})\in\real^{m\times m}$.  The
  Laplacian matrix of $G$ is $L=B\mcA\Bt$. Recall that $L$ is singular and
  its Moore\textendash{}Penrose pseudoinverse $L^{\dagger}$ has the
  following properties: $LL^{\dagger}L=L$,
  $L^{\dagger}LL^{\dagger}=L^{\dagger}$,
  $L^{\dagger}L=(L^{\dagger}L)^{\top}$, and
  $LL^{\dagger}=(LL^{\dagger})^{\top}$. 
For a positive definite diagonal matrix $D\in \real^{m\times m}$, the \emph{$D$-weighted cycle
  projection matrix} $\mathcal{P}_{D}$ is the oblique projection onto $\Ker(B)$ parallel to $\Img(D\mathcal{A}B^{\top})$ given by
\begin{align}\label{eq:cycle-projection}
  \mathcal{P}_D=I_m - D\mathcal{A}B^{\top}(BD\mcA B^{\top})^{\dagger}B.
\end{align}
{\color{black} If $G$ is connected, then the $D$-weighted cycle
  projection matrix $\mathcal{P}_D$ is idempotent and its eigenvalues
  are $0$ and $1$ with algebraic (and geometric) multiplicity $n-1$ and
$m-n+1$, respectively. Moreover, one can show that
  $\Ker(\mathcal{P}_D D\mathcal{A}) = \Img(B^{\top})$. We refer to
  Lemma~\ref{thm:cycle-projection} for the proof of these properties.}  Additional properties of this projection are in~\cite[Theorem 5]{SJ-FB:16h}.

\subsubsection*{The $n$-torus}
Let $\torus^1$ be the unit circle with its standard Riemannian
structure. Given angles $\alpha,\beta\in\Scircle$, the geodesic distance between
$\alpha$ and $\beta$, denoted by $|\alpha-\beta|$, is the length of the
shortest arc in $\Scircle$ connecting $\alpha$ to $\beta$. The
\emph{counterclockwise difference} on $\Scircle$ is the map
$\map{\subscr{d}{cc}}{\Scircle\times\Scircle}{[-\pi,\pi)}$ defined
  by:
\begin{align*}
\subscr{d}{cc}(\alpha,\beta) =
\begin{cases}
  |\alpha -\beta|, \quad \null & \mbox{if the counterclockwise arc from $\alpha$ to
    $\beta$ is shorter than $\pi$},\\
  -|\alpha -\beta|, & \mbox{otherwise}.
\end{cases}
\end{align*}
If we identify $\Scircle \simeq [-\pi,\pi)$, for every $\alpha,\beta\in
  [-\pi,\pi)$, the counterclockwise difference $\subscr{d}{cc}(\alpha,\beta)$
    is given by $\subscr{d}{cc}(\alpha,\beta) =\mathrm{mod}((\beta -
    \alpha) , 2\pi)- \pi$. Let $G$ denote an undirected graph with nodes $\until{n}$ and edge set
$\mcE$ with cardinality $m$.  Given an arbitrary orientation and ordering
of the edges, let $B$ denote the corresponding incidence matrix. If
$\theta\in \torus^n$ (with $\theta_i$ associated to the $i$th node
of $G$), then, by abuse of notation, we write $\theta_i-\theta_j$
to refer to $d_{\mathrm{cc}}(\theta_i,\theta_j)$ and we write {\color{black}$(B^{\top}\theta)$}
to refer to the vector in $\real^m$ defined by
\begin{align}
  \label{def:BTtheta}
  (B^{\top}\theta)_e := \theta_i-\theta_j =
  d_{\mathrm{cc}}(\theta_i,\theta_j),\qquad\mbox{for every }e=(i,j)\in \mathcal{E}.
  \end{align}
The \emph{punctured $n$-torus} $\torus_0^{n}$ is
\begin{align*}
  \torus_0^{n} = \setdef{\theta\in\torus^n}{|\theta_i-\theta_j|<\pi \text{ for each } (i,j) \text{ edge of } G}.
\end{align*}
Note that $\closure(\torus^n_0) = \torus^n$. We also need the notion of quotient space. For every angle
$s\in\Scircle\simeq [-\pi,\pi)$, define the rotation operator
$\map{\mathrm{rot}_s}{\torus^n}{\torus^n}$ by $\mathrm{rot}_s(\theta) =
(\theta_1+s, \ldots,\theta_n+s)^{\top}$.  Using this rotation operator as a
group action, we define the \emph{reduced $n$-torus} as the quotient space
$\torus^n/\Scircle$ and the \emph{reduced punctured $n$-torus} as the
quotient space $\torus_0^n/\Scircle$. For every $\theta\in
\torus^n$, the equivalent class of $\theta$ in the reduced
$n$-torus $\torus^n/\Scircle$ is denoted by $[\theta]$.

\subsection{Equivalence of flow and elastic network problems on the $n$-torus}\label{subsec:equival}

{\color{black}Having introduced the flow network
  problem~\ref{sinflowproblem} and the elastic network
  problem~\ref{torqueproblem} on the $n$-torus, we investigate the
  connection between their solutions. Adopting the analogy that (i)
  supply/demand vectors are torque vectors, and (ii) flow functions are the
  derivatives of elastic energy functions, the next result establishes a
  one-to-one correspondence between the solutions of flow networks and
  elastic networks. }

\begin{theorem}[Equivalence of network problems on the $n$-torus]
  \label{thm:equivalence}%
  Let $G$ be a undirected graph, $q\in \vect{1}_n^{\perp}$ be a
  balanced vector, $\gamma\in\zeropi$ be a phase angle,
  $\{h_e\}_{e\in\mathcal{E}}$ be a family of continuously differentiable
  $2\pi$-periodic odd functions, and
  $\{H_e\}_{e\in \mathcal{E}}$ be a family of twice differentiable
  $2\pi$-periodic even functions such that, for each edge $e$ and each angle
  $\alpha\in(-\gamma, \gamma)$,
  \begin{align*}
    H_{e}(\alpha) - H_e(0) = \int_{0}^{\alpha} h_e(\beta)d\beta
    \qquad\iff\quad \frac{d H_e}{d\alpha}(\alpha) = h_e(\alpha).
  \end{align*}
  Then, for $\theta\in \torus^n$, the following statements are equivalent:
  \begin{enumerate}
  \item\label{p1:flownet} there exists $f\in \real^m$ such that
    $(f,\theta)$ solves the flow network problem~\eqref{eq:flow_form} for
    $(G,\{h_e\}_{e\in \mathcal{E}},q,\gamma)$;
  \item\label{p2:elasticnet} $\theta$ solves the elastic network
    problem~\eqref{eq:elastic_form} for $(G,\{H_e\}_{e\in
      \mathcal{E}},q,\gamma)$.
    \end{enumerate}
\end{theorem}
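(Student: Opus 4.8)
The plan is to collapse both problems onto a single identity relating the flow-balance map $\theta\mapsto Bf$ and the gradient $\nabla_\theta\mcH(\theta)$, and then read off the equivalence directly. Throughout, write $\mathbf{h}(B^\top\theta)\in\real^m$ for the vector whose entry on edge $e=(i,j)$ is $h_e(\theta_i-\theta_j)=h_e((B^\top\theta)_e)$, so that the flow equation~\eqref{eq:f-physics} reads compactly as $f=\mcA\,\mathbf{h}(B^\top\theta)$. Since the angle constraints~\eqref{eq:f-constraints} and~\eqref{eq:force-constraints} coincide in the two problems and $q$ plays the role of both $\pactive$ and $\tau$, the only content to establish is that $Bf=\nabla_\theta\mcH(\theta)$ whenever $\theta$ is feasible.

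First I would carry out the gradient computation. Because each $H_e$ is $2\pi$-periodic, the edge term $\theta\mapsto H_e(\theta_i-\theta_j)$ is well defined on $\torus^n$, and at any $\theta$ with $|\theta_i-\theta_j|<\pi$ the counterclockwise difference $(B^\top\theta)_e=d_{\mathrm{cc}}(\theta_i,\theta_j)$ is a smooth function of $\theta$ with $\nabla_\theta(B^\top\theta)_e$ equal to the $e$-th column of $B$ (indeed $\partial(B^\top\theta)_e/\partial\theta_k=B_{ke}$). Applying the chain rule to $\mcH(\theta)=\sum_{e=(i,j)}a_{ij}H_e((B^\top\theta)_e)$ then gives
\[
  \nabla_\theta\mcH(\theta)=B\,\mcA\,\begin{bmatrix} H_{e_1}'((B^\top\theta)_{e_1}) \\ \vdots \\ H_{e_m}'((B^\top\theta)_{e_m}) \end{bmatrix},
\]
that is, $B\mcA$ applied to the vector of edge-derivatives $H_e'$.

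Next I would discharge the one genuine subtlety: replacing $H_e'$ by $h_e$ on the feasible set. The hypothesis provides $H_e'=h_e$ only on the open interval $(-\gamma,\gamma)$; since $H_e$ is twice differentiable and $h_e$ is continuously differentiable, both $H_e'$ and $h_e$ are continuous, so passing to the limit extends the identity to the closed interval $[-\gamma,\gamma]$. The common angle constraint forces $|\theta_i-\theta_j|\le\gamma<\pi$ at any feasible $\theta$, which both legitimizes the gradient computation above (we remain strictly inside the smooth region of $d_{\mathrm{cc}}$) and yields $H_e'((B^\top\theta)_e)=h_e((B^\top\theta)_e)$ for every edge. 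Substituting gives the master identity
\[
  \nabla_\theta\mcH(\theta)=B\,\mcA\,\mathbf{h}(B^\top\theta)=Bf,\qquad\text{with } f=\mcA\,\mathbf{h}(B^\top\theta).
\]
With this in hand the two implications are immediate: assuming~\ref{p1:flownet}, the pair $(f,\theta)$ satisfies~\eqref{eq:f-physics}, $|\theta_i-\theta_j|\le\gamma$, and $Bf=q$, so the master identity gives $\nabla_\theta\mcH(\theta)=q$ and $\theta$ solves~\eqref{eq:elastic_form}; conversely, assuming~\ref{p2:elasticnet}, setting $f:=\mcA\,\mathbf{h}(B^\top\theta)$ makes~\eqref{eq:f-physics} hold by construction, inherits the angle constraint, and gives $Bf=\nabla_\theta\mcH(\theta)=q$, so $(f,\theta)$ solves~\eqref{eq:flow_form}.

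The computation itself is routine once organized through the incidence matrix, so I expect the main obstacle to be a matter of care rather than depth: ensuring $\mcH$ is genuinely differentiable at feasible $\theta$ despite the discontinuity of $d_{\mathrm{cc}}$ at antipodal configurations (resolved because $\gamma<\pi$ keeps every edge difference strictly inside $(-\pi,\pi)$), and correctly propagating the derivative identity $H_e'=h_e$ from the open interval on which it is assumed to the closed constraint interval $[-\gamma,\gamma]$ on which it is needed. Once the master identity $\nabla_\theta\mcH(\theta)=Bf$ is established, the equivalence follows with no further work.
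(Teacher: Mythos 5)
Your proposal is correct and follows essentially the same route as the paper's proof: compute $\nabla_\theta\mcH$ by the chain rule using the differentiability of $d_{\mathrm{cc}}$ away from antipodal configurations (which the angle constraint guarantees), substitute $H_e'=h_e$, and read off both implications from the resulting identity $\nabla_\theta\mcH(\theta)=B\mcA\,\mathbf{h}(B^\top\theta)$. The only cosmetic difference is that the paper justifies $\partial d_{\mathrm{cc}}(\theta_i,\theta_j)/\partial\theta_i=+1$ via its polytopic characterization of winding cells, whereas you argue it directly; your explicit continuity argument for extending $H_e'=h_e$ to the closed interval $[-\gamma,\gamma]$ is a small point of care the paper leaves implicit.
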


{\color{black}Theorem~\ref{thm:equivalence} guarantees that any result about the flow
network setting is directly applicable to the elastic network setting
and vice versa. For the rest of this paper, we focus on flow
  network problems on the $n$-torus.}

\subsection{Acyclic flow network problems on the $n$-torus}\label{subsec:acyclic}

{\color{black} In this part, we study the flow network
  problem~\eqref{eq:flow_form} over connected and acyclic graphs. It
  turns out that, for acyclic networks, the solvability of the flow network
  problem~\eqref{eq:flow_form} on the $n$-torus 
  can be completely characterized using a simple algebraic
  inequality. Moreover, one can find a closed-form formula for
  the flows.}

\begin{theorem}[Acyclic flow network problem on the $n$-torus]\label{thm:acyclic}
  Consider the flow network problem~\eqref{eq:flow_form} for
  $(G,\{h_e\}_{e\in \mathcal{E}},\pactive,\gamma )$ and suppose that
  $G$ is a connected and acyclic graph and 
  each function $h_e$, $e\in \mathcal{E}$ is monotone on the interval
  $[-\gamma,\gamma]$. Then the following statements are equivalent:
  \begin{enumerate}
  \item{\color{black}\label{p1:acyclic} $\left|(B^{\top}L^{\dagger}\pactive)_e\right|\le |h_e(\gamma)|$, for
    all $e\in \mathcal{E}$,} 
  \item\label{p2:acyclic} there exists a unique solution
    $(f^*,\theta^*)$ for the problem~\eqref{eq:flow_form} on the $n$-torus. 
  \end{enumerate}
  Moreover, if any of the equivalent conditions~\ref{p1:acyclic}
  or~\ref{p2:acyclic} holds, then $f^* = \mathcal{A}B^{\top}L^{\dagger}\pactive$.  
\end{theorem}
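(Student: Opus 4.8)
The plan is to exploit the two defining features of a connected acyclic graph: its incidence matrix $B\in\real^{n\times m}$ has $m=n-1$ columns, trivial cycle space $\Ker(B)=\{\vect{0}_n\}$, and hence full column rank, while $\Img(B^{\top})=\Ker(B)^{\perp}=\real^m$. First I would pin down the flow, which is forced independently of the two conditions. Since $\Ker(B)=\{\vect{0}_n\}$, the balance equation $Bf=\pactive$ has at most one solution, so uniqueness of $f$ is automatic. To identify it, I would verify that $f^{*}=\mcA B^{\top}L^{\dagger}\pactive$ is a solution by computing $Bf^{*}=B\mcA B^{\top}L^{\dagger}\pactive=LL^{\dagger}\pactive$; because $G$ is connected we have $\pactive\in\vect{1}_n^{\perp}=\Img(L)$, so the symmetric idempotent $LL^{\dagger}$ fixes $\pactive$ and $Bf^{*}=\pactive$. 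Thus every solution must have $f=f^{*}$, establishing the closed-form formula at once.

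Next I would reduce feasibility to a condition on the edge differences. Dividing the flow equation $f^{*}_e=a_{ij}h_e(\theta_i-\theta_j)$ by $a_{ij}$ shows that $\theta$ is feasible if and only if $h_e(\theta_i-\theta_j)=(B^{\top}L^{\dagger}\pactive)_e$ and $|\theta_i-\theta_j|\le\gamma$ for every edge $e=(i,j)$. Here the monotonicity hypothesis is essential: since each $h_e$ is odd, so $h_e(0)=0$, and strictly monotone on $[-\gamma,\gamma]$, it is a bijection from $[-\gamma,\gamma]$ onto $[-|h_e(\gamma)|,|h_e(\gamma)|]$ in both the increasing and the decreasing case. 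Consequently a difference $\psi_e\in[-\gamma,\gamma]$ with $h_e(\psi_e)=(B^{\top}L^{\dagger}\pactive)_e$ exists if and only if $|(B^{\top}L^{\dagger}\pactive)_e|\le|h_e(\gamma)|$, which is precisely~\ref{p1:acyclic}, and when it exists it is unique with $\psi_e=h_e^{-1}\big((B^{\top}L^{\dagger}\pactive)_e\big)$. This already gives~\ref{p2:acyclic}$\Rightarrow$\ref{p1:acyclic}: any solution forces $\theta_i-\theta_j\in[-\gamma,\gamma]$, whose image under $h_e$ lies in $[-|h_e(\gamma)|,|h_e(\gamma)|]$.

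For the converse~\ref{p1:acyclic}$\Rightarrow$\ref{p2:acyclic} I would realize the prescribed differences $\psi=(\psi_e)_{e\in\mcE}$ as an actual point of $\torus^n$. Because $B^{\top}$ is surjective onto $\real^m$ the linear system $B^{\top}\theta=\psi$ is solvable; concretely I would root the tree at an arbitrary node, fix its angle, and propagate along edges, adding the increment $\psi_e\in(-\gamma,\gamma)\subset(-\pi,\pi)$ at each step. Acyclicity guarantees there is no cycle whose winding must be reconciled, so this construction is consistent and produces $\theta\in\torus^n$ whose counterclockwise differences satisfy $\theta_i-\theta_j=\psi_e$ and $|\theta_i-\theta_j|\le\gamma$; paired with $f^{*}$ this $\theta$ solves~\eqref{eq:flow_form}.

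The step needing the most care is this torus construction: I must check that propagating the real increments $\psi_e$ around the tree yields a well-defined $\theta\in\torus^n$ whose \emph{counterclockwise} differences, rather than lifted real differences, equal the prescribed $\psi_e$, which is exactly where $|\psi_e|<\pi$ and the absence of cycles are both used. I would also flag that the edge differences determine $\theta$ only up to a global rotation $\mathrm{rot}_s$, so uniqueness of the solution in~\ref{p2:acyclic} is to be read modulo this rotation symmetry (equivalently, as a unique class in the reduced torus $\torus^n/\Scircle$); the flow $f^{*}$ itself is genuinely unique.
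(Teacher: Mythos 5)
Your proposal is correct and follows essentially the same route as the paper's proof: identify the flow as the unique solution $f^*=\mcA B^{\top}L^{\dagger}\pactive$ of the balance equation (trivial cycle space), invert each monotone odd $h_e$ on $[-\gamma,\gamma]$ to get the required edge differences exactly when condition~\ref{p1:acyclic} holds, and propagate these differences along the tree to build $\theta$ uniquely modulo rotation. Your added remarks (explicitly checking $Bf^*=LL^{\dagger}\pactive=\pactive$, and flagging that the constructed counterclockwise differences agree with the prescribed $\psi_e$ because $|\psi_e|\le\gamma<\pi$) are points the paper passes over more quickly, but they do not change the argument.
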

{\color{black} Compared to the literature,
  Theorem~\ref{thm:acyclic} extends~\cite[Theorem
  2]{FD-MC-FB:11v-pnas} and~\cite[Corollary 2]{DM-MT-DW:17} to the flow
  networks on the $n$-torus with arbitrary monotone flow
  functions. Theorem~\ref{thm:acyclic} highlights the role of the
  network's cycle structure in multiplicity of solutions of the flow
  network problem~\eqref{eq:flow_form} on the $n$-torus.} In the rest
of this paper, we assume that the graph $G$ is connected (without loss
of generality) and cyclic.

\myclearpage
\section{Algebraic graph theory on the $n$-torus}
\label{sec:winding-partition}

Algebraic graph theory provides a widely established framework to study
$\real$-valued functions on graphs using linear algebraic
structures~\cite{NB:97}. In this section, we generalize classical concepts
from algebraic graph theory to the setting where the graph nodal variables
take value on the circle $\Scircle$.  The starting idea is to extend the
classic Kirchhoff's voltage law (KVL) to $\Scircle$-valued functions on the
graph. For $\real$-valued functions, KVL states that the sum of nodal
differences along each simple cycle is zero. Remarkably, $\Scircle$-valued
functions on a graph exhibit richer behavior. For such functions, one can
associate an integer, called the winding number, to each simple cycle and
show that the sum of angular differences along the cycle is equal to its
winding number. Loosely speaking, the winding number of a cycle counts the
number of turns (in counterclockwise direction) of the $\Scircle$-valued
function along the cycle. We start our treatment by introducing the notion
of winding number.

\begin{definition}[Winding number, vector, and map]\label{def:wind}
  Let $G$ be a cyclic connected undirected graph and
  $\theta\in\torus^n_0$. Then
\begin{enumerate}
\item\label{def:wind_num} for every cycle $\sigma$ in $G$ with $n_{\sigma}$
  nodes, the \emph{winding number of $\theta$ along $\sigma$} is
  \begin{align*}
    w_{\sigma}(\theta) = \frac{1}{2\pi}\sum_{i=1}^{n_\sigma}
    \subscr{d}{cc}(\theta_{i},\theta_{i+1}), 
  \end{align*}
  where we assume $\sigma=(1,\ldots,n_\sigma,1)$ without loss of generality
  and $\theta_{n_\sigma+1}=\theta_1$ by convention; 
   
 \item\label{def:wind_covector} for every cycle basis $\Sigma
   =\{\sigma_1,\ldots,\sigma_{m-n+1}\}$ in $G$, the \emph{winding vector of
     $\theta$} along the basis $\Sigma$ is the vector
   \begin{align*}
     \begin{bmatrix}w_{\sigma_1}(\theta),&\ldots,&w_{\sigma_{m-n+1}}(\theta)\end{bmatrix}^{\top};
   \end{align*}
   
\item\label{def:wind_map} for every cycle basis $\Sigma
  =\{\sigma_1,\ldots,\sigma_{m-n+1}\}$ in $G$, the \emph{winding map}
  $\map{\mathbf{w}_\Sigma}{\torus_0^n}{\integer^{m-n+1}}$ along the basis
  $\Sigma$ is 
\begin{align*}
     \mathbf{w}_\Sigma (\theta)=\begin{bmatrix}
     w_{\sigma_1}(\theta),&\ldots,&w_{\sigma_{m-n+1}}(\theta)\end{bmatrix}^{\top}.
   \end{align*}
\end{enumerate}
\end{definition} {\color{black} The notion of winding number is a
  classical concept in mathematics and its rigorous definition dates back
  to the early work of Alexander~\cite{JWA:28}. In differential topology,
  the notions of \emph{index of a smooth vector field} and \emph{degree of
    a continuous map} can be considered as generalizations of the winding
  number~\cite{VG-AP:10}. To the best of our knowledge,
  \cite{DAW-SHS-MG:06} is the first work that introduces the notion of
  winding number to characterize the fixed points of a coupled oscillator
  network. The connection between the number of fixed points of the
  Kuramoto model and the winding number of the solution has been first
  explored in~\cite{JO-PFG:10} for ring graphs. More recently, the notions
  of winding number and winding vector have been used in~\cite{DM-MT-DW:17}
  and~\cite{RD-TC-PJ:16} to study multistability in networks of Kuramoto
  oscillators.}

The winding number, winding vector, and winding map are invariant under the
rotation operator $\mathrm{rot}_s$, for $s\in [-\pi,\pi)$, that is, every
  cycle $\sigma$ and every $\theta\in \torus^n_0$ satisfy
  $w_{\sigma}(\theta) = w_{\sigma}(\mathrm{rot}_s(\theta))$. Therefore the
  winding number, winding vector, and winding map are well-defined on the
  reduced punctured $n$-torus $\torus_0^n/\Scircle$. The winding vector of
  $\theta$ along the basis $\Sigma$ collects in one quantity all the
  information required to compute the winding number of each cycle of the
  graph. The winding number, vector and map $\mathbf{w}_{\Sigma}$ and its
  image $\Img(\mathbf{w}_{\Sigma})$ depend on the cycle basis $\Sigma$
  chosen for $G$. However, one can define a basis-independent winding map
  on $\torus^n_0$ (see Appendix~\ref{app:basis-indep}).
  

  \begin{remark}[Algorithms for computing cycle
    basis]\label{rem:cyclebasis} Several algorithms exist in the literature
    for computing the cycle basis of an undirected graph. A simple approach
    is based on finding a spanning tree for $G$ and runs in
    $\mathcal{O}(mc)$ time, where $c$ is the length of the largest cycle in
    the graph. {\color{black} To find a cycle basis with the desired
      extremum properties, one can use the greedy algorithm over the set of
      all cycles in the graph. However, for many graph families, the number
      of cycles grows exponentially with the number of nodes (for
      instance the number of cycles in the complete graph with $n$ nodes is
      $\sum_{k=3}^{n}\frac{1}{2}(k-1)! {n\choose k}$ which grows
      exponentially with $n$). Therefore, the run time of the greedy
      algorithm can grow exponentially with the number of nodes $n$.} To
    find the minimum cycle basis of undirected unweighted graphs,
    Horton~\cite{JDH:87} provided the first-known polynomial-time
    algorithm; Horton's algorithm runs in $\mathcal{O}(m^3n)$ time or, in
    its improvement proposed in~\cite{KM-DM:09}, in
    $\mathcal{O}(m^2n/\mathrm{log}(n) + n^2m)$. Finally, \cite{ME-CL-RR:07}
    proposes an algorithm to construct a cycle basis of length
    $\mathcal{O}(m \log(n)\log\log(n))$ for unweighted graphs. We refer
    to~\cite{TK-CL-KM-DM-RR-TU-KZ:09} for a survey on algorithms and
    computational complexity of computing minimum cycle bases.
    \end{remark}

 To illustrate the notion of winding number we consider the triangle graph
 with cycle $\sigma=(1,2,3,1)$ in Figure~\ref{fig:polygonal-graphs}.  Since
 the cycle space of the triangle graph is spanned by $\sigma$, the winding
 map with respect to $\sigma$ is $\mathbf{w}_\Sigma (\theta) =
 w_{\sigma}(\theta)$.  One can show that, if the angles
 $(\theta_1,\theta_2,\theta_3)\in\torus^3_0$ are contained in an arc of
 length less than or equal to $\pi$, then $w_{\sigma}(\theta) = 0$ and,
 otherwise, $w_{\sigma}(\theta) \in\{-1,+1\}$;
 see~Figure~\ref{fig:triangle-winding}.
 \begin{figure}[ht]\centering
   \includegraphics[width=.75\linewidth]{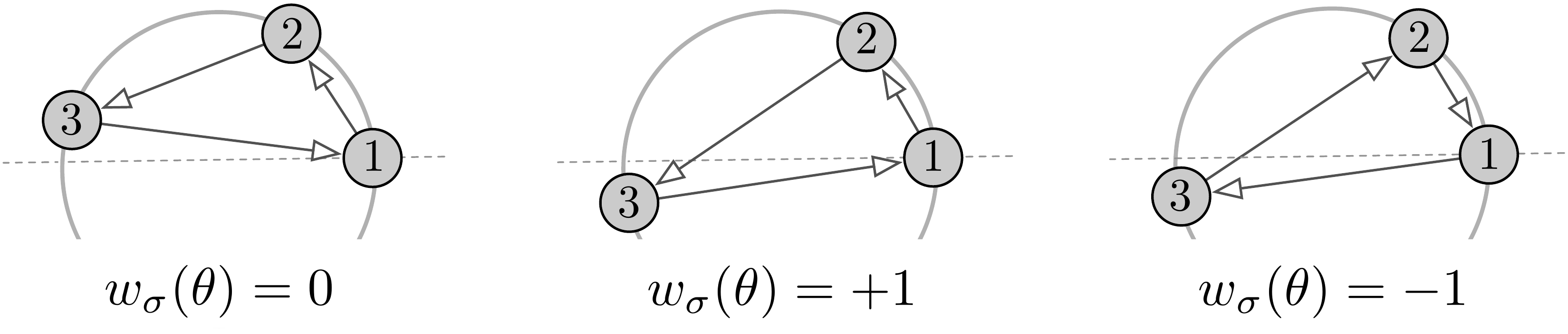}
   \caption{For the triangle graph, if the angles are strictly contained in
     an half circle, the winding number is zero. Otherwise, the winding
     number is $\pm1$ depending upon the node numbering and edge
     orientations. }
   \label{fig:triangle-winding}
 \end{figure}
 For example, for $\phi= (0,\tfrac{\pi}{3}, \tfrac{2\pi}{3})^{\top}$ and
 $\psi = (0,\tfrac{2\pi}{3}, \tfrac{4\pi}{3})^{\top}$, simple book-keeping
 shows
 \begin{align*}
   w_{\sigma}(\phi) &= 
   \tfrac{1}{2\pi}(\tfrac{\pi}{3}+\tfrac{\pi}{3}-\tfrac{2\pi}{3}) = 0,
   \enspace\text{and}\enspace
   w_{\sigma}(\psi) =
   \tfrac{1}{2\pi}(\tfrac{2\pi}{3}+\tfrac{2\pi}{3}+\tfrac{2\pi}{3}) = 1.
 \end{align*}

Now, we prove some useful properties for the winding number and winding
vectors.

\begin{theorem}[Kirchhoff's angle law]\label{thm:wind_prop}
  Let $G$ be a cyclic connected undirected graph with $n$ nodes, and $m$
  edges.  Let $\sigma$ be a cycle on $G$ with $n_{\sigma}$ nodes and
  $\Sigma=\{\sigma_1,\ldots,\sigma_{m-n+1}\}$ be a cycle basis for $G$. Let
  $\theta\in \torus^n_0$. Then
  \begin{enumerate}
  \item\label{p1:wind_integer_bound} the winding number
    $w_{\sigma}(\theta)$ is an integer and $\ds|w_{\sigma}(\theta)| \le
    \left\lceil\frac{\ n_{\sigma}}{2}\right\rceil -1$;
  \item\label{p2:wind_piece_constant} the winding map $\mathbf{w}_\Sigma$
    is piecewise constant and
    \begin{equation*}
      \left|\Img(\mathbf{w}_\Sigma)\right|\le
    \prod_{i=1}^{m-n+1} \left(2\left\lceil\frac{\ n_{\sigma_i}}{2}\right\rceil -1\right).
    \end{equation*}
  \end{enumerate}
  \end{theorem}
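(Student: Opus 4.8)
The plan is to establish both claims directly from the definition of the winding number, with the integrality of $w_\sigma(\theta)$ as the conceptual crux and everything else following by elementary estimates. For statement~\ref{p1:wind_integer_bound}, first I would fix an arbitrary lift $\tilde\theta\in\real^n$ of $\theta$, meaning $\tilde\theta_i\equiv\theta_i\pmod{2\pi}$ for each node. The key elementary fact is that the counterclockwise difference satisfies $d_{\mathrm{cc}}(\theta_i,\theta_{i+1})\equiv\tilde\theta_{i+1}-\tilde\theta_i\pmod{2\pi}$, since $d_{\mathrm{cc}}(\alpha,\beta)$ is by construction the unique representative of $\beta-\alpha$ in $[-\pi,\pi)$. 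Summing this congruence around the cycle $\sigma=(1,\ldots,n_\sigma,1)$ and using that the real telescoping sum $\sum_{i=1}^{n_\sigma}(\tilde\theta_{i+1}-\tilde\theta_i)$ vanishes once we set $\tilde\theta_{n_\sigma+1}=\tilde\theta_1$ (the same lift for the repeated node), I obtain $\sum_{i=1}^{n_\sigma}d_{\mathrm{cc}}(\theta_i,\theta_{i+1})\equiv 0\pmod{2\pi}$. Dividing by $2\pi$ shows $w_\sigma(\theta)\in\integer$; this is precisely the generalization of Kirchhoff's voltage law, where the zero sum is replaced by an integer multiple of $2\pi$.

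For the bound in~\ref{p1:wind_integer_bound}, I would use that $\theta\in\torus^n_0$ forces $|\theta_i-\theta_{i+1}|<\pi$ on every edge of $\sigma$, hence $|d_{\mathrm{cc}}(\theta_i,\theta_{i+1})|<\pi$ strictly. The triangle inequality then gives $2\pi|w_\sigma(\theta)|<n_\sigma\pi$, i.e.\ $|w_\sigma(\theta)|<n_\sigma/2$. Combining this strict inequality with integrality yields the sharp bound: if $n_\sigma$ is even then $|w_\sigma(\theta)|\le n_\sigma/2-1$, and if $n_\sigma$ is odd then $|w_\sigma(\theta)|\le(n_\sigma-1)/2$; both cases equal $\lceil n_\sigma/2\rceil-1$, which I would confirm by a one-line case check on the parity of $n_\sigma$.

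For statement~\ref{p2:wind_piece_constant}, the piecewise-constant claim follows by observing that the map $(\alpha,\beta)\mapsto d_{\mathrm{cc}}(\alpha,\beta)$ is continuous precisely away from antipodal pairs, so on the punctured torus $\torus^n_0$ (where every edge difference is strictly below $\pi$ in geodesic distance) each $\theta\mapsto d_{\mathrm{cc}}(\theta_i,\theta_{i+1})$ is continuous; hence each $w_{\sigma_k}$ is a continuous $\integer$-valued function and is therefore locally constant, making $\mathbf{w}_\Sigma$ piecewise constant. For the cardinality bound, I would apply the bound from~\ref{p1:wind_integer_bound} to each basis cycle $\sigma_k$: the $k$-th component $w_{\sigma_k}(\theta)$ is an integer in $\{-(\lceil n_{\sigma_k}/2\rceil-1),\ldots,\lceil n_{\sigma_k}/2\rceil-1\}$, a set of exactly $2\lceil n_{\sigma_k}/2\rceil-1$ values. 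Since $\Img(\mathbf{w}_\Sigma)$ is contained in the Cartesian product of these coordinate ranges, its cardinality is at most $\prod_{k=1}^{m-n+1}(2\lceil n_{\sigma_k}/2\rceil-1)$.

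The main obstacle is the integrality argument in~\ref{p1:wind_integer_bound}: it is the only step that is not a routine estimate, and the care needed is in making the lift well defined and in justifying the congruence $d_{\mathrm{cc}}(\theta_i,\theta_{i+1})\equiv\tilde\theta_{i+1}-\tilde\theta_i\pmod{2\pi}$ rigorously from the modular definition of $d_{\mathrm{cc}}$. Everything downstream---the norm estimate, the parity case-check, continuity of $d_{\mathrm{cc}}$, and the counting bound---is elementary once integrality is in hand. An equivalent and slightly slicker route I may adopt is to write $w_\sigma(\theta)=\tfrac{1}{2\pi}v_\sigma^\top(B^\top\theta)$ using the signed cycle vector $v_\sigma$ and the $d_{\mathrm{cc}}$-valued vector $B^\top\theta$ from~\eqref{def:BTtheta}, reducing the whole statement to a single algebraic identity per cycle.
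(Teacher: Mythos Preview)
Your proof is correct, and your lifting/telescoping argument for integrality is a genuinely different route from the paper's. The paper proceeds by induction on the cycle length: the base case $n_\sigma=3$ is handled by an explicit geometric case analysis (three points lying in a half-circle give winding number $0$, otherwise $\pm 1$), and the inductive step splits a cycle $(1,2,\ldots,k{+}1,1)$ into the triangle $(1,2,3,1)$ and the shorter cycle $(1,3,\ldots,k{+}1,1)$, invoking additivity of the signed cycle vectors $v_\sigma = v_{\sigma'}+v_{\sigma''}$ to conclude $w_\sigma = w_{\sigma'}+w_{\sigma''}\in\integer$. Your approach---lift to $\real^n$, note that each $d_{\mathrm{cc}}$ term is congruent modulo $2\pi$ to the corresponding real difference, and telescope the real sum to zero---is more direct, avoids the triangle case analysis entirely, and is essentially the standard covering-space explanation for why winding numbers are integers. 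The paper's inductive argument has the minor virtue of staying within the combinatorial language of signed cycle vectors (and indeed yields the identity $w_\sigma(\theta)=\tfrac{1}{2\pi}v_\sigma^\top(B^\top\theta)$ as a byproduct), but your route buys brevity and conceptual clarity. For the bound in part~\ref{p1:wind_integer_bound} and for all of part~\ref{p2:wind_piece_constant}, your arguments are essentially the same as the paper's.
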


Theorem~\ref{thm:wind_prop}\ref{p1:wind_integer_bound} generalizes the
classic Kirchhoff's voltage law (KVL) to the setting of graphs with nodal
variables in $\Scircle$: the sum of the nodal differences along every
simple cycle is equal to $2\pi w$, where $w\in \integer$ is the winding
number of the cycle.


In order to illustrate the Kirchhoff's angle law and various properties in
Theorem~\ref{thm:wind_prop}, we get back to the polygonal graphs in
Figure~\ref{fig:polygonal-graphs}.  We consider the square graph with a
diagonal edge and define its cycles $\sigma_1=(1,2,4,1)$,
$\sigma_2=(2,3,4,2)$, and $\sigma_3=(1,2,3,4,1)$, as
in~Figure~\ref{fig:polygonal-graphs}.  Note that only two cycles are
independent in the sense that the signed cycle vectors satisfy
$v_{\sigma_1}+v_{\sigma_2} = v_{\sigma_3}$ and so the set $\Sigma =
\{v_{\sigma_1},v_{\sigma_2}\}$ is a basis for cycle space of $G$. The
winding map of the graph $G$ with respect to basis $\Sigma$ is given by
$\mathbf{w}_\Sigma (\theta) = \begin{bmatrix}w_{\sigma_1} &
  w_{\sigma_2}\end{bmatrix}^\top$.
Theorem~\ref{thm:wind_prop}\ref{p1:wind_integer_bound} implies that
$|w_{\sigma_i}(\theta)|\le \lceil\frac{3}{2}\rceil -1 = 1$, for every
$\theta\in \torus_0^4$ and every $i\in\{1,2\}$. Therefore,
$\Img(\mathbf{w}_\Sigma)\subseteq \{-1,0,+1\}^2$. Moreover, on can see that
the winding vectors $\begin{bmatrix}1 & 1\end{bmatrix}^\top$ and
  $\begin{bmatrix}-1 & -1\end{bmatrix}^\top$ are not possible for the
    square graph with a diagonal. This implies that
    $\Img(\mathbf{w}_\Sigma)\subset \{-1,0,+1\}^2$ and therefore, in
    general, the inequality in
    Theorem~\ref{thm:wind_prop}\ref{p2:wind_piece_constant} can be strict.

Since winding vector is a piecewise constant map, one can partition
the $n$-torus into the regions where the winding vector assume a fixed integer
vector. These regions are called winding cells.

\begin{definition}[Winding cell]\label{def:windingcelldef}
  Consider the cyclic connected undirected graph $G$ with cycle basis
  $\Sigma$ and winding map $\mathbf{w}_\Sigma$. For
  $\mathbf{u}\in\Img(\mathbf{w}_\Sigma)$, the \emph{$\mathbf{u}$-winding
    cell} is the subset of $\torus_0^n$ defined by
  \begin{align*}
    \Omega^G_{\mathbf{u}} = \mathbf{w}_\Sigma^{-1}(\mathbf{u}). 
  \end{align*}
\end{definition}

For the triangle graph in Figure~\ref{fig:polygonal-graphs}, one can
visualize the winding cells. Note that the triangle graph with only cycle
  $\sigma=(1,2,3,1)$ has $\Img(\mathbf{w}_\Sigma) = \{-1,0,+1\}$.
  Therefore, the winding cells of $\torus_0^3$ are
  \begin{align*}
   \Omega^G_{-1}  &=\bigsetdef{\theta=(\theta_1,\theta_2,\theta_3)^{\top}\in
  		\torus_0^3}{\sum_{i=1}^{3}\subscr{d}{cc}(\theta_i,\theta_{i+1})=-2\pi}, \\
    \Omega^G_{0} &=\bigsetdef{\theta=(\theta_1,\theta_2,\theta_3)^{\top}\in
      \torus_0^3}{\mbox{there exists an arc of
        length }\pi \mbox{ containing }\theta_1,\theta_2,\theta_3}, \\
    \Omega^G_{1} &=\bigsetdef{\theta=(\theta_1,\theta_2,\theta_3)^{\top}\in
      \torus_0^3}{\sum_{i=1}^{3}\subscr{d}{cc}(\theta_i,\theta_{i+1})=2\pi}.
  \end{align*}
  Figure~\ref{fig:T3partition-phase} illustrates these winding cells.	
  \begin{figure}[ht]\centering
    \includegraphics[width=\linewidth]{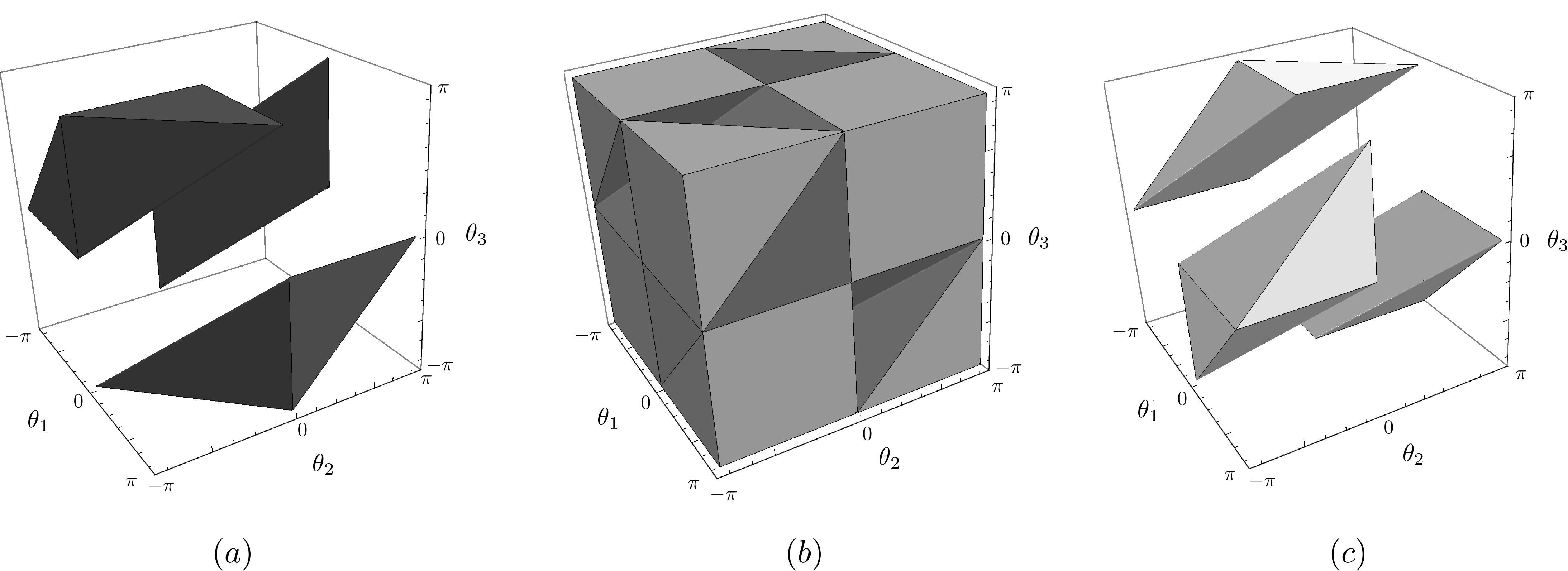}
    \caption{The winding cells $\Omega_{-1}^G$, $\Omega_0^G$, and
      $\Omega_1^G$, induced by the triangle graph on $\torus_0^3$ are
      shown in Figures (a), (b), and (c), respectively. The three axes are
      the three angles $\theta_1, \theta_2, \theta_3$, each taking values
      in the interval $[-\pi, \pi)$. These axes are periodic on the
        3-torus, so all three winding cells are path-connected, despite
        their disconnected representations in $\real^3$.}
      \label{fig:T3partition-phase}
  \end{figure}

  We are now ready to state the main result of this section.  Since the
  winding map is piecewise constant and finite valued, it partitions the
  $n$-torus into a finite number of regions. {\color{black} This partition
    captures the connection between the geometry of the $n$-torus and the
    cycle structure of the network. As we will see later, it also
    plays a crucial role in localizing the solutions of flow network
    problems on the $n$-torus. }

\begin{theorem}[Winding partition of the $n$-torus]
  \label{thm:partition}
    Consider the cyclic connected undirected graph $G$ with cycle basis
    $\Sigma$ and winding map $\mathbf{w}_\Sigma$.  Then the set
    $\setdef{\closure(\Omega^G_{\mathbf{u}})}{\mathbf{u}\in\Img(\mathbf{w}_\Sigma)}$
    is a partition of $\torus^n$, called the \emph{$G$-winding partition}
    of $\torus^n$, in the sense that
\begin{enumerate}

  \item \label{p2:union} $\torus^n = \bigcup_{\mathbf{u}\in
    \Img(\mathbf{w}_\Sigma)} \closure(\Omega^G_{\mathbf{u}})$;

  \item \label{p3:intersection} for every $\mathbf{u},\mathbf{v}\in
    \Img(\mathbf{w}_\Sigma)$, $\mathbf{u}\ne\mathbf{v}$ implies
    $\Omega^G_{\mathbf{u}}\cap \Omega^G_{\mathbf{v}}=\emptyset$.

\end{enumerate}
\end{theorem}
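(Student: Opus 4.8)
The plan is to dispatch the disjointness claim~\ref{p3:intersection} immediately and then to obtain the covering claim~\ref{p2:union} by commuting closure with a \emph{finite} union.

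For~\ref{p3:intersection}, note that the winding cells are by definition the fibers $\Omega^G_{\mathbf{u}} = \mathbf{w}_\Sigma^{-1}(\mathbf{u})$ of the single-valued map $\map{\mathbf{w}_\Sigma}{\torus_0^n}{\integer^{m-n+1}}$. A function assigns exactly one value to each point, so fibers over distinct values are disjoint; thus $\mathbf{u} \ne \mathbf{v}$ forces $\Omega^G_{\mathbf{u}} \cap \Omega^G_{\mathbf{v}} = \emptyset$, with nothing further to check.

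For~\ref{p2:union}, I would first record that the cells cover the punctured torus. Every $\theta \in \torus_0^n$ has a well-defined winding vector $\mathbf{w}_\Sigma(\theta) \in \Img(\mathbf{w}_\Sigma)$ and hence lies in the cell $\Omega^G_{\mathbf{w}_\Sigma(\theta)}$, so
\begin{equation*}
  \torus_0^n = \bigcup_{\mathbf{u}\in \Img(\mathbf{w}_\Sigma)} \Omega^G_{\mathbf{u}}.
\end{equation*}
Taking closures on both sides and invoking the identity $\closure(\torus_0^n) = \torus^n$ recorded earlier yields $\torus^n = \closure\big(\bigcup_{\mathbf{u}} \Omega^G_{\mathbf{u}}\big)$. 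The decisive step is then the set-topological identity $\closure(\bigcup_i A_i) = \bigcup_i \closure(A_i)$, which holds for any \emph{finite} family of sets: the inclusion $\supseteq$ is monotonicity of closure, while $\subseteq$ follows because a finite union of closed sets is closed and contains $\bigcup_i A_i$. Finiteness of the index set $\Img(\mathbf{w}_\Sigma)$ is exactly what Theorem~\ref{thm:wind_prop}\ref{p2:wind_piece_constant} provides, via the bound $|\Img(\mathbf{w}_\Sigma)| \le \prod_{i=1}^{m-n+1}\big(2\lceil n_{\sigma_i}/2\rceil - 1\big)$. Applying the identity gives $\torus^n = \bigcup_{\mathbf{u}} \closure(\Omega^G_{\mathbf{u}})$; the reverse inclusion is automatic, since each $\Omega^G_{\mathbf{u}} \subseteq \torus_0^n \subseteq \torus^n$ forces $\closure(\Omega^G_{\mathbf{u}}) \subseteq \torus^n$. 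This settles both parts.

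I do not anticipate a deep obstacle here; the only points requiring care are (a) that the asserted disjointness is for the cells $\Omega^G_{\mathbf{u}}$ themselves and not their closures---whose shared boundaries may overlap, which is precisely why the statement is a partition only in this weak, boundary-sharing sense---and (b) that finiteness of $\Img(\mathbf{w}_\Sigma)$ must be in hand before commuting closure with the union, since the identity $\closure(\bigcup_i A_i) = \bigcup_i \closure(A_i)$ can fail for infinite families.
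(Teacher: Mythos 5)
Your proof is correct and follows essentially the same route as the paper's: disjointness is immediate from the fiber definition, and the covering claim comes from taking closures of the identity $\torus_0^n = \bigcup_{\mathbf{u}} \Omega^G_{\mathbf{u}}$ together with $\closure(\torus_0^n)=\torus^n$. Your explicit justification that closure commutes with the union only because $\Img(\mathbf{w}_\Sigma)$ is finite (via Theorem~\ref{thm:wind_prop}\ref{p2:wind_piece_constant}) is a point the paper leaves implicit, so your write-up is if anything slightly more careful.
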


{\color{black}While Definition~\ref{def:windingcelldef} is mathematically
  rigorous, it does not provide insight into the topological properties of
  the winding cells. These insights are particularly important for
  visualization of the winding partition of the $n$-torus developed in
  Theorem~\ref{thm:partition}. In the next theorem, we provide a
  continuous bijection which allows one to visualize the winding cells as convex polytopes in Euclidean
  spaces. Using this characterization, we show that the winding cells enjoy
  several useful properties. (Recall the definition of cycle-edge matrix
  from equation~\eqref{defeq:cycle-edge} and of the vector
  {\color{black}$(B^{\top}\theta)$} from equation~\eqref{def:BTtheta}.)}

\begin{theorem}[Polytopic characterization of winding cells]
  \label{thm:linerization+correspondence}
  Consider a connected cyclic undirected graph $G$ with cycle basis
  $\Sigma$, winding map $\mathbf{w}_\Sigma$, and cycle-edge matrix
  ${C}_{\Sigma}\in \real^{(m-n+1)\times m}$. Pick $\mathbf{u}\in
  \Img(\mathbf{w}_\Sigma)$. Then
  \begin{enumerate}
  \item\label{p1:diff} for every $[\theta]\in \Omega^G_{\mathbf{u}}/\Scircle$, there
    exists a unique $\mathbf{x}\in \vect{1}_n^{\perp}$ such that {\color{black}$(B^{\top}\theta) =
    B^{\top}\mathbf{x}+2\pi C^{\dagger}_{\Sigma}\mathbf{u}$};
  \item\label{p1.5:connec-comp} define the open convex polytope
    \begin{align*}
      P_{\mathbf{u}} =\setdef{\mathbf{x}\in\vect{1}_n^{\perp}}
      {\|B^{\top}\mathbf{x}+2\pi C^{\dagger}_{\Sigma}\mathbf{u}\|_{\infty}<\pi}.
    \end{align*}
   {\color{black}Then, the map $[\theta] \mapsto x$ defined in part~\ref{p1:diff} is a continuous bijection
    between the reduced winding cell $\Omega^G_{\mathbf{u}}/\Scircle$
    and the polytope $P_{\mathbf{u}}$, and it is a homeomorphism on
    every compact subset of $\Omega^G_{\mathbf{u}}/\Scircle$.}

  \end{enumerate}
\end{theorem}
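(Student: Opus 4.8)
The plan is to reduce both parts to two linear-algebraic facts about the cycle-edge matrix $C_{\Sigma}$, and to treat the topology in part~\ref{p1.5:connec-comp} at the end. Fact~(a): the rows of $C_{\Sigma}$ form a basis of the cycle space $\Ker(B)$, so $\Img(C_{\Sigma}^{\top}) = \Ker(B)$ and hence $\Ker(C_{\Sigma}) = \Img(B^{\top})$. Fact~(b): since $C_{\Sigma}$ has full row rank $m-n+1$, its pseudoinverse satisfies $C_{\Sigma}C_{\Sigma}^{\dagger} = I_{m-n+1}$, while $C_{\Sigma}B^{\top} = 0$ because each row of $C_{\Sigma}$ is a signed cycle vector annihilated by $B$.

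For part~\ref{p1:diff}, I first note that $(B^{\top}\theta)$ depends only on angle differences, hence is invariant under $\mathrm{rot}_s$ and well defined on the class $[\theta]$. Membership $\theta\in\Omega^G_{\mathbf{u}}$ is the statement $\mathbf{w}_{\Sigma}(\theta) = \tfrac{1}{2\pi}C_{\Sigma}(B^{\top}\theta) = \mathbf{u}$. Applying Fact~(b) gives $C_{\Sigma}\big((B^{\top}\theta) - 2\pi C_{\Sigma}^{\dagger}\mathbf{u}\big) = 2\pi\mathbf{u} - 2\pi\mathbf{u} = 0$, so by Fact~(a) the vector $(B^{\top}\theta) - 2\pi C_{\Sigma}^{\dagger}\mathbf{u}$ lies in $\Img(B^{\top})$ and equals $B^{\top}\mathbf{x}$ for some $\mathbf{x}$; uniqueness of $\mathbf{x}$ in $\vect{1}_n^{\perp}$ follows from $\Ker(B^{\top})\cap\vect{1}_n^{\perp} = \{0\}$ for a connected graph. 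This defines the map $\Phi\colon[\theta]\mapsto\mathbf{x}$.

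For the continuous bijection in part~\ref{p1.5:connec-comp} I would verify four properties of $\Phi$. Image in $P_{\mathbf{u}}$: since $\theta\in\torus_0^n$ forces $\|B^{\top}\theta\|_{\infty}<\pi$ and $B^{\top}\theta = B^{\top}\mathbf{x} + 2\pi C_{\Sigma}^{\dagger}\mathbf{u}$, the defining inequality of $P_{\mathbf{u}}$ holds. Injectivity: if $\Phi([\theta]) = \Phi([\theta'])$ then $B^{\top}\theta = B^{\top}\theta'$, so $\theta_i - \theta_i' \equiv \theta_j - \theta_j' \pmod{2\pi}$ across every edge; connectivity makes $\theta_i - \theta_i'$ a common constant $s$, so $\theta' = \mathrm{rot}_s(\theta)$ and $[\theta]=[\theta']$. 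Surjectivity: given $\mathbf{x}\in P_{\mathbf{u}}$, set $y := B^{\top}\mathbf{x} + 2\pi C_{\Sigma}^{\dagger}\mathbf{u}$; Facts~(a)--(b) give $C_{\Sigma}y = 2\pi\mathbf{u}\in 2\pi\integer^{m-n+1}$, the cycle-consistency condition that lets me integrate $y$ (modulo $2\pi$) along a spanning tree to obtain $\theta\in\torus^n$ with $(B^{\top}\theta)=y$; the bound $\|y\|_{\infty}<\pi$ places $\theta$ in $\torus_0^n$ and $\tfrac{1}{2\pi}C_{\Sigma}y=\mathbf{u}$ places it in $\Omega^G_{\mathbf{u}}$. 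Continuity: $\Phi$ is the composition of the continuous map $[\theta]\mapsto(B^{\top}\theta)$ (here $\subscr{d}{cc}$ is continuous because the punctured torus keeps every edge difference in $(-\pi,\pi)$) with the linear map $z\mapsto (B^{\top})^{\dagger}(z - 2\pi C_{\Sigma}^{\dagger}\mathbf{u})$.

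Finally, for the homeomorphism on compact subsets, I take any compact $K\subseteq\Omega^G_{\mathbf{u}}/\Scircle$: then $\Phi|_K$ is a continuous injection from a compact space into the Hausdorff space $\vect{1}_n^{\perp}\cong\real^{n-1}$, hence a homeomorphism onto its compact image by the standard compactness argument, with no explicit inverse required. I expect the surjectivity step to be the main obstacle: converting the abstract edge-difference vector $y$ into a genuine point of $\torus^n$ is a path-lifting argument, and one must check that the integrality $C_{\Sigma}y\in 2\pi\integer^{m-n+1}$ really does render the spanning-tree integration path-independent around every cycle of $G$ and that the recovered winding vector equals $\mathbf{u}$ exactly. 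Everything else is bookkeeping built on Facts~(a) and~(b).
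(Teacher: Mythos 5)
Your route is the same as the paper's: part~\ref{p1:diff} from $\mathbf{w}_\Sigma(\theta)=\tfrac{1}{2\pi}C_\Sigma(B^{\top}\theta)$ combined with $\Ker(C_\Sigma)=\Img(B^{\top})$ and the full row rank of $C_\Sigma$ (the paper's Lemma~\ref{thm:shift_property}), and part~\ref{p1.5:connec-comp} by checking image, injectivity, surjectivity, continuity, and then invoking the continuous-bijection-from-a-compact-space-to-a-Hausdorff-space argument. Your treatment of part~\ref{p1:diff}, the injectivity, the image computation, the continuity, and the compactness step are all correct and essentially identical to the paper's (your part~\ref{p1:diff} is in fact slightly cleaner, working directly with $C_\Sigma C_\Sigma^{\dagger}=I_{m-n+1}$ instead of the paper's detour through $\Ker(C_\Sigma^{\dagger}C_\Sigma)$).

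The one genuine gap is the surjectivity step, and you flag it yourself. The condition $C_\Sigma y=2\pi\mathbf{u}\in 2\pi\integer^{m-n+1}$ only says $v_\sigma^{\top}y\in 2\pi\integer$ for the cycles $\sigma$ in the chosen basis $\Sigma$; the spanning-tree integration is path-independent only if $v_\sigma^{\top}y\in 2\pi\integer$ for every fundamental cycle of the tree. A fundamental cycle vector is a priori only a \emph{real} combination of the rows of $C_\Sigma$ (not every cycle basis is integral), so the required integrality does not follow from $C_\Sigma y\in 2\pi\integer^{m-n+1}$ alone. Two ways to close it: (a) use the hypothesis $\mathbf{u}\in\Img(\mathbf{w}_\Sigma)$, say $\mathbf{u}=\mathbf{w}_\Sigma(\theta_0)$, write $v_\sigma^{\top}=r^{\top}C_\Sigma$ for any cycle $\sigma$, and compute $v_\sigma^{\top}y=2\pi r^{\top}C_\Sigma C_\Sigma^{\dagger}\mathbf{u}=2\pi r^{\top}\mathbf{u}=v_\sigma^{\top}(B^{\top}\theta_0)=2\pi w_\sigma(\theta_0)\in 2\pi\integer$ by Theorem~\ref{thm:wind_prop}\ref{p1:wind_integer_bound}; or (b) the paper's route: Lemma~\ref{thm:shift_property}\ref{p3:integer-basis} (proved via integral cycle bases and the Smith normal form) yields an integer solution $\mathbf{z}$ of $C_\Sigma\mathbf{z}=\mathbf{u}$, whence $\mathbf{z}-C_\Sigma^{\dagger}\mathbf{u}=B^{\top}\alpha$ and one can simply define $\theta=\mathrm{mod}(\mathbf{x}-2\pi\alpha,2\pi)$ and verify $(B^{\top}\theta)=B^{\top}\mathbf{x}+2\pi C_\Sigma^{\dagger}\mathbf{u}$ edge by edge, bypassing the tree integration entirely. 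With either patch your argument is complete.
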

In Figure~\ref{fig:T34partition-dcc}(a) and
Figure~\ref{fig:T34partition-dcc}(b), we plot the reduced winding
cells $\Omega_{-1}^G/\Scircle$, $\Omega_0^G/\Scircle$, and
$\Omega_1^G/\Scircle$ induced by the triangle graph
and square graph on $\torus^3_0/\Scircle$ and $\torus^4_0/\Scircle$,
respectively. {\color{black}In Figure~\ref{fig:T34partition-dcc}(c), we
  plot the reduced winding cells induced by the square-with-diagonal
  graph (Figure~\ref{fig:polygonal-graphs}) on
  $\torus^4_0/\Scircle$. The vector on each winding cell indicates
  its associated winding vector $\mathbf{w}_{\Sigma}$, for
  $\Sigma =\{\sigma_1,\sigma_2\}$.} Figure \ref{fig:T34partition-dcc}
plots the winding cells as functions of counterclockwise distances
$d_{cc}(\theta_i, \theta_{i + 1})$ rather than phases $\theta_i$,
effectively modding out uniform rotations.

  \begin{figure}[ht]\centering
    \includegraphics[width=\linewidth]{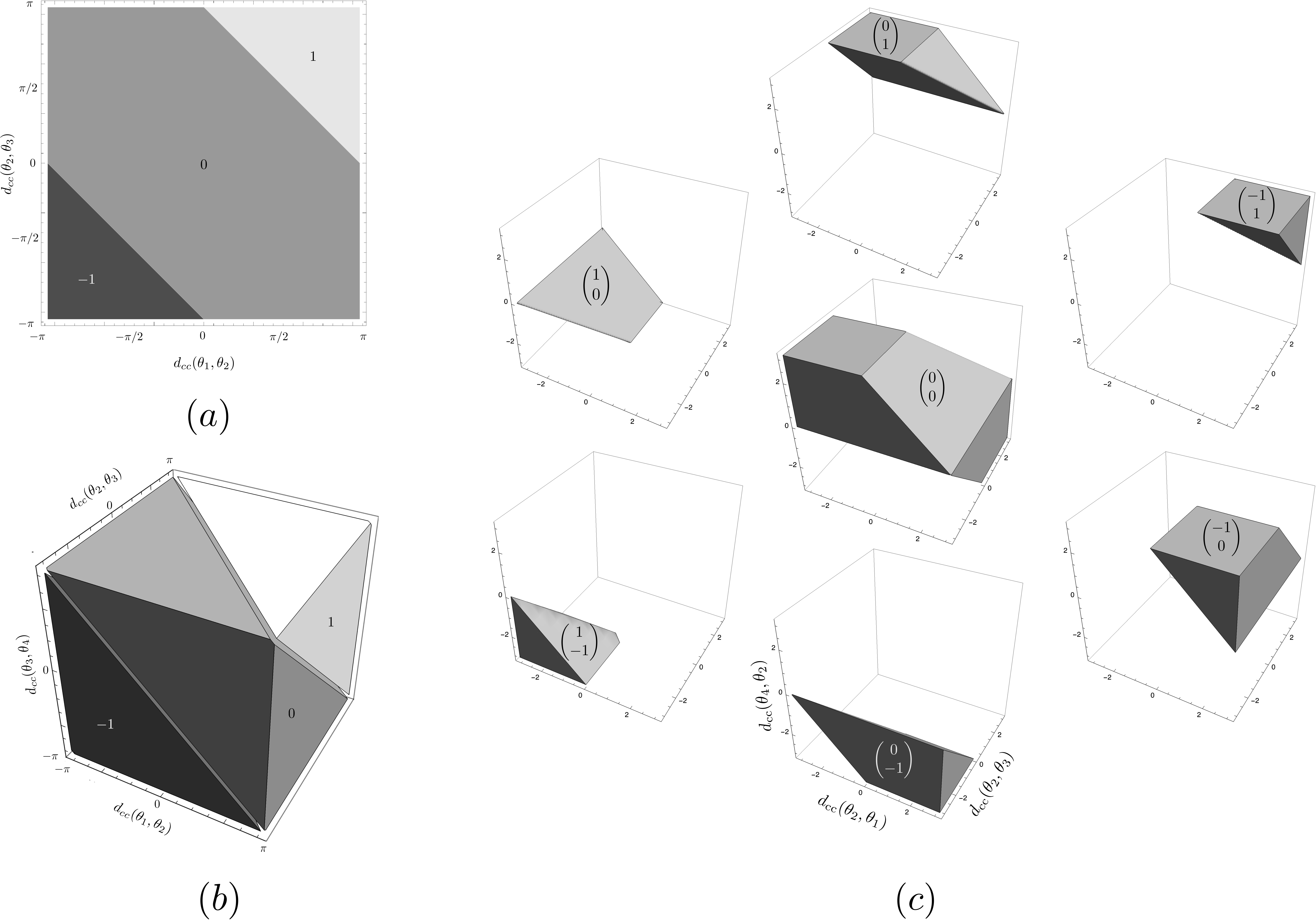}
    \caption{{\color{black}The reduced winding cells on the triangle graph (a), the square
      graph (b), and the square-with-diagonal graph (c) on
      $\torus_0^3/\Scircle$, $\torus_0^4/\Scircle$, and $\torus^4_0/\Scircle$, respectively. The
      number (vector) on each region indicates the winding number
      (winding vector) of that region. In
      all plots, each axis corresponds to an edge in the graph and indicates
      the counter-clockwise difference between the incident nodes, on the
      interval $[-\pi, \pi)$.}}
      \label{fig:T34partition-dcc}
  \end{figure}

We conclude this section by some comments about the role of cycle basis
$\Sigma$ in our framework.
First, while the winding map $\mathbf{w}_{\Sigma}$ in
Definition~\ref{def:wind}~\ref{def:wind_map} depends upon $\Sigma$, the
winding partition of $\torus^n$ in Theorem~\ref{thm:partition} is
independent of it, in the sense that, given any other cycle basis
$\Sigma'$, each winding cell $\Omega^G_{\mathbf{u}}$, for $\mathbf{u}\in
\Img(\mathbf{w}_\Sigma)$, is equal to a cell $\Omega^G_{\mathbf{u}'}$ for
an appropriate $\mathbf{u}'\in \Img(\mathbf{w}_{\Sigma'})$.
Second, note that the upper bound on the cardinality of
$\Img(\mathbf{w}_\Sigma)$ given in
Theorem~\ref{thm:wind_prop}\ref{p2:wind_piece_constant} depends upon
$\Sigma$.
While computing the cycle basis that minimizes this upper bound appears
computationally complex, it is simple to provide the following
approximation. Adopting the shorthand $|\Sigma|=m-n+1$, we compute
\begin{equation*}
  |\Img(\mathbf{w}_\Sigma)|  \le
  \prod_{i=1}^{|\Sigma|} \left(2\left\lceil\frac{\ n_{\sigma_i}}{2}\right\rceil
  -1\right) 
  \le \prod_{i=1}^{|\Sigma|} \left(n_{\sigma_i}+1\right)\le \left(
  \frac{\sum_{i=1}^{|\Sigma|} (n_{\sigma_i}+1)}{|\Sigma|}\right)^{|\Sigma|},
\end{equation*}
where we used the inequality of arithmetic
and geometric means.  Now, the classic result~\cite[Theorem 6]{JDH:87}
states that the length of a minimum cycle bases of an unweighted graph is
at most $3(n-1)(n-2)/2$, so that
\begin{align*}
  |\Img(\mathbf{w}_\Sigma)| \le
  \left(\frac{3(n-1)(n-2)}{m-n+1}+1\right)^{m-n+1}.
\end{align*}
Alternatively, the length of minimum cycle basis of an unweighted graph is
known~\cite{ME-CL-RR:07} to be in $\mathcal{O}(m\log(n)\log(\log(n)))$ so
that there exists a constant $C$ such that
\begin{align*}
  |\Img(\mathbf{w}_\Sigma)| \le \big( C \log n \log\log n \big)^{m-n+1}.
\end{align*}
If the graph admits a basis of cycles with length at most $n_{\sigma}$,
then
\begin{align*}
  |\Img(\mathbf{w}_\Sigma)| \le (2\lceil{n_{\sigma}}/2\rceil -1)^{m-n+1}.
\end{align*}
For example, the complete graph $K_n$ has a cycle basis with $n_{\sigma}=3$
and so, given $m-n+1=(n-1)(n-2)/2$, we know
$|\Img(\mathbf{w}_\Sigma)|\le3^{(n-1)(n-2)/2}$.  The two-dimensional grid
graph $G_{h,k}$ with $n=hk$ nodes and $m=2hk-h-k$ edges, has a cycle basis
with $n_\sigma=4$ so that $|\Img(\mathbf{w}_\Sigma)|\le 3^{hk-h-k+1}$.

\myclearpage

\section{Localization and decomposition of network flows}
\label{sec:at-most-uniqueness}

In this section, we use the tools of Section~\ref{sec:winding-partition} to
study flow network problems on the $n$-torus. Recall that
Theorem~\ref{thm:equivalence} ensures that any result about the flow
network problem is directly applicable to the elastic networks setting.

We first use the winding partition to localize the solutions of the flow
network problem~\eqref{eq:flow_form}. In particular, we show that if the
flow functions are monotone, then flow network problem~\eqref{eq:flow_form}
has at most one solution inside each winding cell.
  
\begin{theorem}[At most uniqueness of solutions]\label{thm:at_most_unique}
 Consider the flow network problem~\eqref{eq:flow_form} for
 $(G,\{h_e\}_{e\in \mathcal{E}},\pactive,\gamma)$ and suppose that each flow function $h_e$, $e\in \mathcal{E}$, is monotone on the interval
 $[-\gamma,\gamma]$. Let $\Sigma$ be a cycle basis for $G$. Then, for any winding vector
 $\mathbf{u}\in\Img(\mathbf{w}_{\Sigma})$, there exists at most one
 solution $(f,\theta)$ for flow network problem~\eqref{eq:flow_form}, such
 that $\theta\in \Omega^{G}_{\mathbf{u}}$.
\end{theorem}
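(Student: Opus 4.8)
The plan is to show that any two solutions lying in the same winding cell must have identical edge-difference vectors, from which uniqueness of the flow and of $\theta$ (up to rigid rotation) follows. First I would record that the flow is slaved to the phases: by the flow equation~\eqref{eq:f-physics}, $f$ is completely determined by the vector $B^{\top}\theta$ of counterclockwise differences, and both the problem~\eqref{eq:flow_form} and the winding cells are invariant under the rotation $\mathrm{rot}_s$. Hence it suffices to prove that $B^{\top}\theta$ is unique on $\Omega^G_{\mathbf{u}}$; equivalently, that the induced flow $f$ and the reduced class $[\theta]$ are unique. So suppose, toward a contradiction, that $(f^1,\theta^1)$ and $(f^2,\theta^2)$ are two solutions with $\theta^1,\theta^2\in\Omega^G_{\mathbf{u}}$, and write $y^k=B^{\top}\theta^k$.

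The argument then rests on the orthogonality between the cycle space $\Ker(B)$ and the cut space $\Img(B^{\top})$. On one hand, the balance equation~\eqref{eq:f-KCL} gives $Bf^1=Bf^2=\pactive$, so $f^1-f^2\in\Ker(B)$. On the other hand, since $\theta^1$ and $\theta^2$ share the winding vector $\mathbf{u}$, the definition of the winding map together with Kirchhoff's angle law (Theorem~\ref{thm:wind_prop}) yields $C_{\Sigma}y^1=C_{\Sigma}y^2=2\pi\mathbf{u}$; because the rows of $C_{\Sigma}$ form a basis of $\Ker(B)$, we have $\Ker(C_{\Sigma})=\Img(B^{\top})$, and therefore $y^1-y^2\in\Img(B^{\top})$. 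The same conclusion is available from Theorem~\ref{thm:linerization+correspondence}\ref{p1:diff}, which writes $y^k=B^{\top}\mathbf{x}^k+2\pi C_{\Sigma}^{\dagger}\mathbf{u}$ and hence $y^1-y^2=B^{\top}(\mathbf{x}^1-\mathbf{x}^2)$. Since $\Ker(B)$ and $\Img(B^{\top})$ are orthogonal complements, this gives at once
\begin{equation*}
  (y^1-y^2)^{\top}(f^1-f^2)=0.
\end{equation*}

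The final step expands this inner product edge by edge. Writing $a_e>0$ for the weight of edge $e$ and using $f^k_e=a_e h_e(y^k_e)$, the identity becomes
\begin{equation*}
  \sum_{e\in\mathcal{E}} a_e\,(y^1_e-y^2_e)\bigl(h_e(y^1_e)-h_e(y^2_e)\bigr)=0.
\end{equation*}
The angle constraint~\eqref{eq:f-constraints} forces $y^1_e,y^2_e\in[-\gamma,\gamma]$, and on this interval each $h_e$ is strictly monotone, so every summand is sign-definite and vanishes exactly when $y^1_e=y^2_e$. The crux of the proof—and the step I expect to be the main obstacle—is to pass from the vanishing of the \emph{sum} to the vanishing of each \emph{individual} summand: this is immediate once the summands share a common sign, which holds precisely when the flow functions are strictly increasing (as in the canonical power-flow case $h_e=\sin$ with $\gamma<\pi/2$), since then each term is nonnegative. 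Granting this, $y^1_e=y^2_e$ for every $e$, so $y^1=y^2$; this forces $f^1=f^2$ and, since $B^{\top}$ is injective on $\vect{1}_n^{\perp}$, also $\theta^1=\theta^2$ up to rotation, contradicting the assumed existence of two distinct solutions and completing the argument.
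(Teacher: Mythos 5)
Your proof is correct, but it takes a genuinely different route from the paper's. The paper proves this theorem in the appendix by first passing through the machinery of Section~\ref{sec:complete-solver}: two solutions $(f,\phi)$, $(g,\psi)$ with $\phi,\psi\in\Omega^G_{\mathbf{u}}$ are shown, via Theorem~\ref{thm:equiv-kur-flow}, to yield two fixed points of the $\mathbf{u}$-winding fixed-point map $T_{\mathbf{u}}$, and uniqueness then follows from the Banach contraction argument of Theorem~\ref{thm:convergence_ifexists}. Your argument is instead a direct, self-contained one: the orthogonal decomposition $\real^m=\Img(B^{\top})\oplus\Ker(B)$ gives $(y^1-y^2)^{\top}(f^1-f^2)=0$, and strict monotonicity of the $h_e$ forces each edge term $a_e(y^1_e-y^2_e)\bigl(h_e(y^1_e)-h_e(y^2_e)\bigr)$ to vanish. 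This is shorter and more elementary, requiring only Lemma~\ref{thm:shift_property}\ref{p2:ker} and Theorem~\ref{thm:linerization+correspondence}\ref{p1:diff}, whereas the paper's route earns more for the same price: the contraction argument simultaneously delivers the computational algorithm, its convergence rate, and the existence criterion, with at-most-uniqueness falling out as a byproduct. Notably, the paper itself deploys your exact orthogonality-plus-monotonicity computation in its proof of Theorem~\ref{thm:prop-power-flow}\ref{p5:flow_comparison}, so your technique is entirely in the spirit of the paper.

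One caveat you correctly flag deserves emphasis: your final step needs all summands to share a sign, which holds when every $h_e$ is strictly increasing (or, symmetrically, every $h_e$ strictly decreasing), whereas the theorem's hypothesis of ``monotone'' nominally permits a mix of increasing and decreasing flow functions across different edges. This is not a defect relative to the paper, however: the paper's own contraction bound rests on the claim $0<(\Hmin)_{ee}(\Hmax)_{ee}^{-1}\le 1$, which likewise fails for strictly decreasing $h_e$ (there $(\Hmin)_{ee}\le(\Hmax)_{ee}<0$ gives a ratio at least $1$). So both proofs effectively assume a uniform orientation of monotonicity; yours simply makes the assumption visible.
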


{\color{black}In the context of Kuramoto coupled
  oscillators~\eqref{eq:kuramoto-assoc}, it is known that, if the network
  has a planar topology, then the locally stable synchronous trajectories
  have distinct winding
  vectors~\cite{RD-TC-PJ:16,DM-MT-DW:17}. Theorem~\ref{thm:at_most_unique}
  can be considered as a generalization of these results to flow networks
  on the $n$-torus with arbitrary topology and arbitrary monotone flow
  functions.} For graphs with a cycle basis of given length, the angle
constraint~\eqref{eq:f-constraints} can be used to specify the winding
cells in which there is no solution for the flow network
problem~\eqref{eq:flow_form}.

{\color{black}
\begin{corollary}[Upper bound on the number of solutions]
  \label{cor:graphs-short-cycles}
  Consider the flow network problem~\eqref{eq:flow_form} for
  $(G,\{h_e\}_{e\in \mathcal{E}},\pactive,\gamma)$. Suppose that each flow
  function $h_e$, $e\in \mathcal{E}$, is monotone on $[-\gamma,\gamma]$ and
  $G$ has a cycle basis $\Sigma=\{\sigma_1,\ldots,\sigma_{m-n+1}\}$ with
  maximum cycle length $k$. For $\mathbf{u}\in\Img(\mathbf{w}_{\Sigma})$,
  the following statements hold:
  \begin{enumerate}
  \item\label{p1:shortbasis} if $\|\mathbf{u}\|_{\infty} \le \left\lfloor
    \frac{k\gamma}{2\pi} \right\rfloor $, then problem~\eqref{eq:flow_form}
    has at most one solution $(f,\theta)$ with $\theta\in
    \Omega^G_{\mathbf{u}}$;
  \item\label{p2:shortbasis} if $\|\mathbf{u}\|_{\infty} > \left\lfloor
    \frac{k\gamma}{2\pi} \right\rfloor$, then problem~\eqref{eq:flow_form}
    has no solution $(f,\theta)$ with $\theta\in \Omega^G_{\mathbf{u}}$; and
    \item\label{p3:upperbound} Problem~\eqref{eq:flow_form} has at most
      $\prod_{i=1}^{m-n+1} \left\lfloor \frac{n_{\sigma_i}\gamma}{2\pi}
      \right\rfloor $ solutions $(f,\theta)\in \real^m\times \torus^n$.
    \end{enumerate}
\end{corollary}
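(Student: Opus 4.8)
The plan is to deduce all three parts from a single sharpened estimate on the cyclic winding numbers that folds in the angle constraint~\eqref{eq:f-constraints}. Fix any $\theta\in\torus_0^n$ that is feasible for problem~\eqref{eq:flow_form}, so that $|\theta_i-\theta_j|\le\gamma$ on every edge. For a basis cycle $\sigma_i$ of length $n_{\sigma_i}$, Definition~\ref{def:wind}\ref{def:wind_num} expresses $2\pi\, w_{\sigma_i}(\theta)$ as a sum of $n_{\sigma_i}$ counterclockwise edge differences, each of magnitude at most $\gamma$ by the angle constraint; the triangle inequality then gives $2\pi|w_{\sigma_i}(\theta)|\le n_{\sigma_i}\gamma$. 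Since Theorem~\ref{thm:wind_prop}\ref{p1:wind_integer_bound} certifies that $w_{\sigma_i}(\theta)$ is an integer, I may floor the right-hand side to obtain the key estimate $|w_{\sigma_i}(\theta)|\le\lfloor n_{\sigma_i}\gamma/(2\pi)\rfloor$. Every conclusion of the corollary follows from this inequality together with the at-most-uniqueness result of Theorem~\ref{thm:at_most_unique}.

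I would prove part~\ref{p2:shortbasis} by contraposition. If $\theta\in\Omega^G_{\mathbf{u}}$ is feasible, then $u_i=w_{\sigma_i}(\theta)$, and the key estimate combined with $n_{\sigma_i}\le k$ and monotonicity of the floor gives $|u_i|\le\lfloor n_{\sigma_i}\gamma/(2\pi)\rfloor\le\lfloor k\gamma/(2\pi)\rfloor$ for every $i$, hence $\|\mathbf{u}\|_{\infty}\le\lfloor k\gamma/(2\pi)\rfloor$. Contrapositively, any winding vector with $\|\mathbf{u}\|_{\infty}>\lfloor k\gamma/(2\pi)\rfloor$ admits no feasible $\theta$ in $\Omega^G_{\mathbf{u}}$, and therefore no solution. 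Part~\ref{p1:shortbasis} is then immediate: the complementary inequality $\|\mathbf{u}\|_{\infty}\le\lfloor k\gamma/(2\pi)\rfloor$ does not exclude the cell, so the monotonicity hypothesis on $\{h_e\}$ lets me invoke Theorem~\ref{thm:at_most_unique} to conclude that $\Omega^G_{\mathbf{u}}$ carries at most one solution.

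For part~\ref{p3:upperbound} I would pass from these per-cell statements to a global count. By Theorem~\ref{thm:partition} every solution's phase vector $\theta$ lies in exactly one winding cell, and by Theorem~\ref{thm:at_most_unique} each cell carries at most one solution; hence the number of solutions is bounded by the number of winding vectors $\mathbf{u}\in\Img(\mathbf{w}_\Sigma)$ that can host a feasible $\theta$. The key estimate pins each coordinate $u_i$ of such an admissible $\mathbf{u}$ to the finitely many integers of absolute value at most $\lfloor n_{\sigma_i}\gamma/(2\pi)\rfloor$, so the admissible winding vectors are confined to a finite axis-aligned box indexed by the $m-n+1$ basis cycles. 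Multiplying the per-cycle admissible counts across these basis cycles bounds the total number of admissible winding vectors, and hence the number of solutions, by the finite product asserted in part~\ref{p3:upperbound}.

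The only genuine subtlety, and the step I would handle most carefully, is the passage from the continuous feasibility region (the angle constraint on $\torus^n$) to the discrete count of winding vectors: one must verify that the integer quantity obtained after flooring is precisely the per-cycle factor entering the product, and that Theorem~\ref{thm:partition} forces distinct solutions into distinct cells so that no solution is counted twice. Both points reduce to the integrality in Theorem~\ref{thm:wind_prop}\ref{p1:wind_integer_bound} and the disjointness asserted in Theorem~\ref{thm:partition}\ref{p3:intersection}, so beyond this interface I expect the argument to be routine bookkeeping, since the substantive uniqueness content is already supplied by Theorem~\ref{thm:at_most_unique}.
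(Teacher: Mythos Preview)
Your proposal is correct and follows essentially the same route as the paper: establish the key per-cycle bound $|w_{\sigma_i}(\theta)|\le\lfloor n_{\sigma_i}\gamma/(2\pi)\rfloor$ for any feasible $\theta$ via the triangle inequality and integrality (Theorem~\ref{thm:wind_prop}\ref{p1:wind_integer_bound}), derive part~\ref{p2:shortbasis} by contraposition from $n_{\sigma_i}\le k$, obtain part~\ref{p1:shortbasis} directly from Theorem~\ref{thm:at_most_unique}, and get part~\ref{p3:upperbound} by counting admissible winding vectors coordinate-wise. If anything, your argument for part~\ref{p3:upperbound} is slightly more explicit than the paper's in invoking the disjointness of winding cells (Theorem~\ref{thm:partition}\ref{p3:intersection}) to ensure distinct solutions occupy distinct cells before multiplying the per-coordinate counts.
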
}

For the special case $\gamma<\pi/2$,
Corollary~\ref{cor:graphs-short-cycles}~\ref{p1:shortbasis} and~\ref{p2:shortbasis} applies to the complete
graphs, complete bipartite graphs, and two-dimensional grids and shows
that the flow network problem has at most one solution, see
Figure~\ref{fig:graph-examples}. {\color{black} In the literature,
  similar upper bounds for the number of stable solutions of the
  Kuramoto model~\eqref{eq:kuramoto-assoc} have been obtained for
  planar networks~\cite{DM-MT-DW:17} and for simple
  cycles~\cite{RD-TC-PJ:16}. Corollary~\ref{cor:graphs-short-cycles}\ref{p3:upperbound}
  generalizes the results in~\cite{DM-MT-DW:17,RD-TC-PJ:16} to provide
  an upper bound on the number of solutions of flow networks with
  arbitrary topology and arbitrary monotone flow functions. }

\begin{figure}[h]\centering
  \subfloat
           {\includegraphics[width=.2\linewidth,height=7em,keepaspectratio]{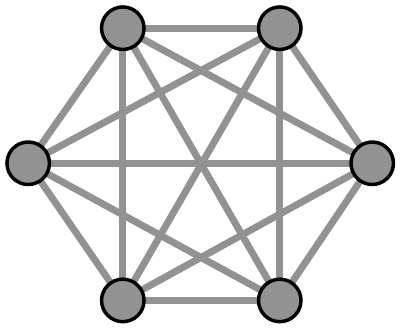}}%
           \hfil
  \subfloat
           {\includegraphics[width=.2\linewidth,height=7em,keepaspectratio]{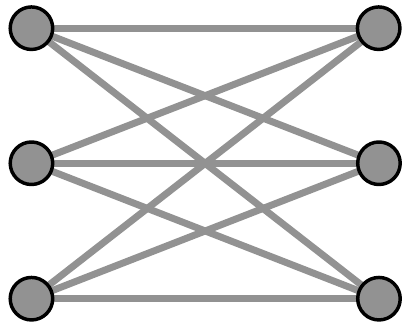}}%
           \hfil
  \subfloat
           {\includegraphics[width=.42\linewidth,height=6em,keepaspectratio]{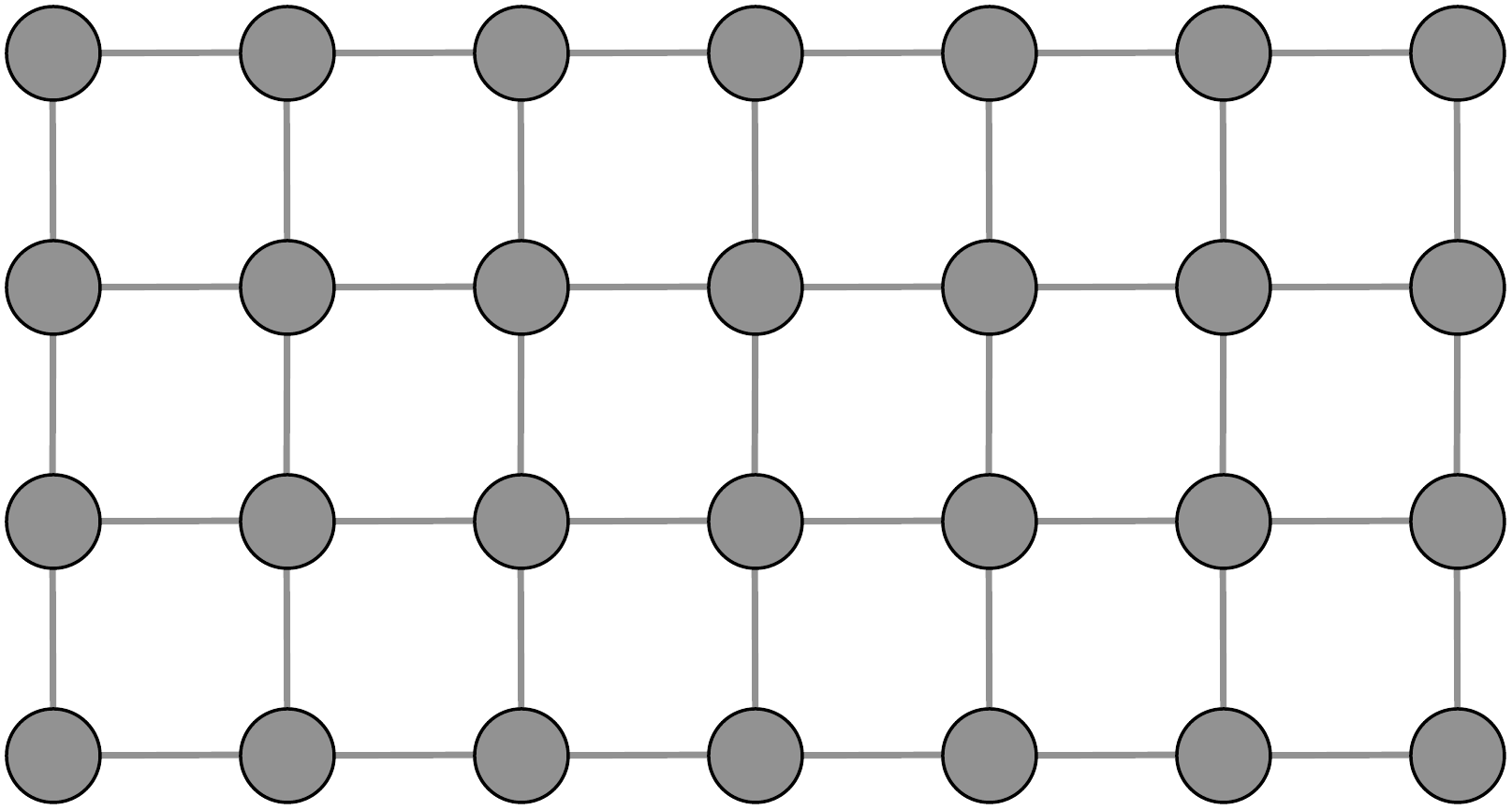}}  
           \caption{Corollary~\ref{cor:graphs-short-cycles} implies
             that, the flow network problem~\eqref{eq:flow_form} with
             $\gamma<\pi/2$ on complete graphs, complete bipartite graphs, and
             two-dimensional grid graphs, have at most one solution, independently of supply/demand vector and
             edge weights.}\label{fig:graph-examples}
\end{figure}

Next, we propose a natural decomposition for the edge space $\real^m$ and
describe how it relates to the flow network
problem~\eqref{eq:flow_form}. For every connected undirected graph $G$, the
edge space of $G$ can be uniquely decomposed as follows:
\begin{align}\label{eq:decompos}
  \real^m = \Img(\mathcal{A}B^{\top})  \oplus \Ker(B).
\end{align}
This oblique decomposition is a special case of the classic orthogonal
decomposition into cutset and cycle space, studied in algebraic graph
theory, see~\cite[\S 2]{NB:97}.  Let $f$ be a flow for the flow network
problem~\eqref{eq:flow_form} on the graph $G$. For every simple cycle
$\sigma$ in $G$, the \emph{loop flow} associated with the cycle $\sigma$ is
the scalar $v_{\sigma}^{\top} f$, where $v_{\sigma}$ is the signed cycle
vector associated to the cycle $\sigma$.

\begin{theorem}[Decomposition of flows]
  \label{thm:prop-power-flow}
  Consider the flow network problem~\eqref{eq:flow_form} for
  $(G,\{h_e\}_{e\in \mathcal{E}},\pactive,\gamma)$ and suppose that each
  flow function $h_e$, $e\in \mathcal{E}$, is monotone on the interval
  $[-\gamma,\gamma]$. Let $\Sigma$ be a cycle basis for $G$ and suppose that $(f,\phi),(g,\psi)\in\real^m\times
  \torus^n$ are two solutions for flow network
  problem~\eqref{eq:flow_form}. Then the following statements hold:
  \begin{enumerate}\setcounter{enumi}{1}
    \item\label{p1:decomposition} $f$ can be decomposed uniquely as
    \begin{align*}
      f= \supscr{f}{cut} + \supscr{f}{cyc},
    \end{align*}
    where $\supscr{f}{cut} = \mathcal{A}B^{\top}L^{\dagger}\pactive \in
    \Img(\mathcal{A}B^{\top})$ is called the \emph{cutset flow} and $\supscr{f}{cyc} \in
    \Ker(B)$ is called the \emph{cycle flow}.
  \item\label{p3:f-unique} $f-g\in\Ker(B)$, that is,
    flows differ by a cycle flow;
      \item\label{p4:f-unique-to-u} $f=g$ $\enspace\iff\enspace$
        $\mathbf{w}_{\Sigma}(\phi) = \mathbf{w}_{\Sigma}(\psi)$
        $\enspace\iff\enspace$ $\phi=\mathrm{rot}_s(\psi)$, for some
        $s\in[-\pi,\pi)$;
  \item\label{p5:flow_comparison} if $G$ has exactly one cycle $\sigma$ and
    each flow function $h_e$, $e\in\mathcal{E}$, is strictly increasing,
    then
    \begin{align*}
      w_{\sigma}(\phi) > w_{\sigma}(\psi) \quad \iff \quad
      v_{\sigma}^{\top}f > v_{\sigma}^{\top}g, 
    \end{align*}
    that is, loop flows are strictly increasing with respect to
    winding numbers.
  \end{enumerate}
\end{theorem}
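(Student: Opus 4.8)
The plan is to dispatch parts~\ref{p1:decomposition} and~\ref{p3:f-unique} directly from the algebraic structure, and to handle the equivalences in~\ref{p4:f-unique-to-u} and the monotone sign argument in~\ref{p5:flow_comparison} with more care. For~\ref{p1:decomposition} I would invoke the direct-sum decomposition~\eqref{eq:decompos}, $\real^m = \Img(\mcA B^\top) \oplus \Ker(B)$, and write $f = f^{\mathrm{cut}} + f^{\mathrm{cyc}}$ uniquely with $f^{\mathrm{cut}} = \mcA B^\top y \in \Img(\mcA B^\top)$ and $f^{\mathrm{cyc}} \in \Ker(B)$. Applying $B$ and using $Bf = \pactive$ from~\eqref{eq:f-KCL} together with $B f^{\mathrm{cyc}} = 0$ gives the Laplacian equation $L y = \pactive$ with $L = B\mcA B^\top$. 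Since $G$ is connected, $\pactive \in \vect{1}_n^{\perp} = \Img(L)$, so $y = L^{\dagger}\pactive$ solves it; as $B^\top \vect{1}_n = 0$, the vector $f^{\mathrm{cut}} = \mcA B^\top y$ is independent of the chosen solution $y$, whence $f^{\mathrm{cut}} = \mcA B^\top L^{\dagger}\pactive$. Part~\ref{p3:f-unique} is then immediate, since $Bf = Bg = \pactive$ forces $B(f-g) = 0$, i.e. $f - g \in \Ker(B)$ (equivalently, $f$ and $g$ share the cutset component of~\ref{p1:decomposition}).

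For~\ref{p4:f-unique-to-u} I would record two elementary facts. First, by~\eqref{eq:f-physics}--\eqref{eq:f-constraints} every angle difference lies in $[-\gamma,\gamma]$, where each $h_e$ is strictly monotone and hence injective; therefore $f = g$ if and only if $\subscr{d}{cc}(\phi_i,\phi_j) = \subscr{d}{cc}(\psi_i,\psi_j)$ on every edge, i.e. $B^\top\phi = B^\top\psi$ in the sense of~\eqref{def:BTtheta}. Second, combining Definition~\ref{def:wind} with~\eqref{defeq:cycle-edge} expresses the winding map as $\mathbf{w}_\Sigma(\theta) = \tfrac{1}{2\pi} C_{\Sigma} (B^\top\theta)$, so $B^\top\phi = B^\top\psi$ forces $\mathbf{w}_\Sigma(\phi) = \mathbf{w}_\Sigma(\psi)$. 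I would then close the loop: if $\phi = \mathrm{rot}_s(\psi)$, all edge differences coincide, giving $f = g$ by the first fact and $\mathbf{w}_\Sigma(\phi) = \mathbf{w}_\Sigma(\psi)$ by rotation invariance of the winding map; if $f = g$, then $B^\top\phi = B^\top\psi$, and propagating this common set of differences from a root along a spanning tree of the connected graph $G$ yields $\phi_i = \psi_i + (\phi_r - \psi_r)$ for every node $i$, i.e. $\phi = \mathrm{rot}_s(\psi)$; and if $\mathbf{w}_\Sigma(\phi) = \mathbf{w}_\Sigma(\psi) = \mathbf{u}$, then $\phi,\psi \in \Omega^G_{\mathbf{u}}$, so the at-most-uniqueness result (Theorem~\ref{thm:at_most_unique}) forces the two solutions to agree up to rotation, returning $\phi = \mathrm{rot}_s(\psi)$. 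This last step is the only place Theorem~\ref{thm:at_most_unique} is needed.

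For~\ref{p5:flow_comparison}, a single cycle means $m - n + 1 = 1$, so $\Ker(B) = \spn\{v_{\sigma}\}$; by~\ref{p3:f-unique}, $f - g = \lambda v_{\sigma}$ for a scalar $\lambda$, and $v_{\sigma}^\top f - v_{\sigma}^\top g = \lambda\|v_{\sigma}\|_2^2$, so $v_{\sigma}^\top f > v_{\sigma}^\top g$ if and only if $\lambda > 0$. It remains to tie $\lambda$ to $w_{\sigma}(\phi) - w_{\sigma}(\psi)$. On every edge $e=(i,j)$ not in $\sigma$, $(v_{\sigma})_e = 0$ gives $f_e = g_e$, hence equal differences and no contribution to the winding number. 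On every cycle edge, $f_e - g_e = \lambda(v_{\sigma})_e$ with $(v_{\sigma})_e = \pm 1$; since $a_{ij} > 0$ and $h_e$ is strictly increasing, $\subscr{d}{cc}(\phi_i,\phi_j) - \subscr{d}{cc}(\psi_i,\psi_j)$ shares the sign of $f_e - g_e$, so $(v_{\sigma})_e\big(\subscr{d}{cc}(\phi_i,\phi_j) - \subscr{d}{cc}(\psi_i,\psi_j)\big)$ has the sign of $\lambda$ and is nonzero whenever $\lambda \neq 0$. Summing over the cycle and using $w_{\sigma}(\phi) - w_{\sigma}(\psi) = \tfrac{1}{2\pi} v_{\sigma}^\top(B^\top\phi - B^\top\psi)$ gives $\operatorname{sign}\!\big(w_{\sigma}(\phi) - w_{\sigma}(\psi)\big) = \operatorname{sign}(\lambda)$. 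The resulting trichotomy matches $\lambda$ to both $w_{\sigma}(\phi) - w_{\sigma}(\psi)$ and $v_{\sigma}^\top f - v_{\sigma}^\top g$, yielding the claimed equivalence.

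The linear algebra in~\ref{p1:decomposition}--\ref{p3:f-unique} is routine. I expect the main obstacle to be the sign bookkeeping in~\ref{p5:flow_comparison}: one must cleanly separate the cycle edges, where strict monotonicity transfers the sign of the loop-flow increment $\lambda$ to the angle-difference increments, from the tree edges, which contribute nothing, and then verify that these signs accumulate coherently into the winding number rather than canceling. This is precisely where strict monotonicity, rather than mere monotonicity, is indispensable.
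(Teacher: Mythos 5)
Your proof is correct. Parts~\ref{p1:decomposition} and~\ref{p3:f-unique} coincide with the paper's argument, and your part~\ref{p4:f-unique-to-u} follows the same skeleton (monotonicity gives $f=g\iff (B^{\top}\phi)=(B^{\top}\psi)$, then the winding vectors agree, then at-most-uniqueness closes the cycle of implications), with two harmless simplifications: you read off $\mathbf{w}_{\Sigma}(\theta)=\tfrac{1}{2\pi}C_{\Sigma}(B^{\top}\theta)$ directly instead of routing through Theorem~\ref{thm:linerization+correspondence}\ref{p1:diff} and the identity $C_{\Sigma}C_{\Sigma}^{\dagger}=I$, and you recover $\phi=\mathrm{rot}_s(\psi)$ from $(B^{\top}\phi)=(B^{\top}\psi)$ by spanning-tree propagation rather than invoking at-most-uniqueness a second time. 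Where you genuinely diverge is part~\ref{p5:flow_comparison}. The paper sums the edgewise monotonicity inequality into the global quadratic form $\left((B^{\top}\phi)-(B^{\top}\psi)\right)^{\top}\mcA\left(h(B^{\top}\phi)-h(B^{\top}\psi)\right)\ge 0$, substitutes the polytopic representation of the two winding cells so that the $\Img(B^{\top})$ part annihilates against $B(f-g)=\vect{0}_n$, and is left with $\tfrac{1}{n}\bigl(w_{\sigma}(\phi)-w_{\sigma}(\psi)\bigr)\bigl(v_{\sigma}^{\top}f-v_{\sigma}^{\top}g\bigr)\ge 0$; strictness must then be extracted from part~\ref{p4:f-unique-to-u}, which the paper leaves implicit. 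You instead exploit that $\Ker(B)=\spn\{v_{\sigma}\}$ is one-dimensional, write $f-g=\lambda v_{\sigma}$, and transfer the sign of $\lambda$ edge-by-edge to the angle increments via strict monotonicity before summing into $2\pi\bigl(w_{\sigma}(\phi)-w_{\sigma}(\psi)\bigr)=v_{\sigma}^{\top}\bigl((B^{\top}\phi)-(B^{\top}\psi)\bigr)$. Your route is more elementary and delivers the strict trichotomy in one pass; the paper's inner-product argument is less tied to $\dim\Ker(B)=1$ and is the natural starting point if one wanted a multi-cycle generalization, but for the single-cycle statement as given your argument is, if anything, cleaner.
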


\newcommand{\Fsd}{\subscr{F}{sd}}
\newcommand{\Hmin}{\subscr{L}{min}}
\newcommand{\Hmax}{\subscr{L}{max}}

\myclearpage

\section{{\color{black}Solving flow network problems on the $n$-torus}}
\label{sec:complete-solver}

In this section, we focus on verifying the existence and computing the
solutions of the flow network problem~\eqref{eq:flow_form} on the
$n$-torus.

One of the classic methods for solving nonlinear equations is the
Newton\textendash{}Raphson method.  This method comes with two main
disadvantages in the context of the flow network
problem~\eqref{eq:flow_form}. First, the Newton\textendash{}Raphson
iterations comes with no global convergence guarantee. Therefore, when the
iteration does not converge, one cannot infer the lack of solutions. The
second issue is the sensitivity of the Newton\textendash{}Raphson method
with respect to initial conditions. This is a critical issue since the flow
network problem~\eqref{eq:flow_form} can often have several solutions.
Consider for example the flow network problem~\eqref{eq:flow_form} on the
$5$-torus over the unit-weighted pentagon graph in
Figure~\ref{fig:polygonal-graphs} with flow function $h_e = \sin$, for
every $e\in \mathcal{E}$. For $\pactive=\vect{0}_5$, the flow network
problem has the phase-synchronous solution $\phi^*=\vect{0}_5$ with winding
number $0$ and the splay-state solution $\psi^* = (0, \tfrac{2\pi}{5},
\tfrac{4\pi}{5}, \tfrac{6\pi}{5}, \tfrac{8\pi}{5})^{\top}$ with winding
number $+1$. Consider the sequence $\{x^{(n)}\}_{n\in \mathbb{Z}_{\ge 0}}$
generated by the Newton\textendash{}Raphson algorithm:
\begin{align*}
  x^{(n+1)} = x^{(n)} - \left(B\mcA\diag(\cos(B^{\top}x^{(n)}))B^{\top}\right)^{\dagger} B \mcA\sin(B^{\top}x^{(n)}),
\end{align*}
with initial condition $x^{(0)} = (0, \tfrac{\pi}{3}, \tfrac{\pi}{2},
\tfrac{2\pi}{3} , 0)^{\top}$. While $\mathbf{w}_{\Sigma}(x^{(0)}) = 0$, the
Newton\textendash{}Raphson iterations starting form $x^{(0)}$ converges to
$\psi^*$ with winding number $+1$.  This example illustrates that the
Newton\textendash{}Raphson method is not consistent with the partition of
the $n$-torus introduced in Theorem~\ref{thm:partition}.

In the previous section, we showed that the winding partition of the
$n$-torus is useful for localizing the solutions of the flow network
problem~\eqref{eq:flow_form}. In the following Section~\ref{subsec:wbe}, we
provide a novel transcription for problem~\eqref{eq:flow_form} which
reveals the role of winding cells. The main advantage of this transcription
is to replace the phase angle $\theta\in \torus^n$ (i.e., a continuous
variable) with the winding vector {\color{black}$\mathbf{w}_{\Sigma}(\theta)\in
\mathbb{Z}^{m-n+1}$} (i.e., a discrete variable). Then, in
Section~\ref{subsec:fpf}, we introduce an appropriate operator and write
this novel transcription as a fixed-point problem. We show that this
fixed-point formulation is amenable to analysis using well-known
contraction techniques.

\subsection{Winding balance equation}
\label{subsec:wbe}
Using the polytopic characterization of winding
cells~\eqref{thm:linerization+correspondence}, one can identify the
counterclockwise angle difference vector {\color{black}$(B^{\top}\theta)$} in the
$\mathbf{u}$-winding cell with elements in the polytope
$P_{\mathbf{u}}$. As a result, we get a transcription of the flow network
problem~\eqref{eq:flow_form} which reveals the role of the winding
partition in this problem. Before we state this winding transcription,
{\color{black} for every $\gamma\in [0,\pi)$}, we introduce the extended flow function
$\map{h_{\gamma}}{\real^m}{\real^m}$, where, for every $e\in
\{1,\ldots,m\}$, its $e$th component is defined by:
 \begin{align*}
   \left(h_{\gamma}\right)_e(y) =
   \begin{cases}
     h_e(y_e), & |y_e|< \gamma,\\
     \frac{\partial h_e}{\partial y_e}(\gamma) (y_e-\gamma) +
     h_e(\gamma), & y_e\ge \gamma,\\
     \frac{\partial h_e}{\partial y_e}(\gamma) (y_e+\gamma) + h_e(-\gamma), & y_e\le -\gamma.
   \end{cases}
 \end{align*}
 Clearly, each function $(h_{\gamma})_e(y)$, $e\in\mathcal{E}$, depends
 only on the variable $y_e$ and it is monotone with respect to $y_e$. As a
 result, $h_{\gamma}$ is invertible on $\real^m$. Furthermore, define two diagonal matrices $\Hmin,\Hmax\in \real^{m\times m}$ by
\begin{align*}
  (\Hmin)_{ee} = \min_{y\in [-\gamma,\gamma]}\frac{\partial h_e(y)}{\partial y}\qquad\mbox{ and }\qquad
  (\Hmax)_{ee} = \max_{y\in [-\gamma,\gamma]}\frac{\partial h_e(y)}{\partial y},
\end{align*}
for all $e\in \{1,\ldots,m\}$.  Since each $h_e$, $e\in \{1,\ldots,m\}$, is
monotone we have $0<(\Hmin)_{ee}(\Hmax)_{ee}^{-1}\le 1$. This implies that
$\left\|I_m - \Hmin\Hmax^{-1}\right\|_{\infty}<1$. For every $\mathbf{u}\in \Img(\mathbf{w}_{\Sigma})$, the
 \emph{$\mathbf{u}$-winding balance equation} for the flow network problem~\eqref{eq:flow_form} is: {\color{black}
\begin{subequations}\label{eq:winding-form}
\begin{align}
  &B f = \pactive,\label{eq:winding-balance}\\
  &\prjcyc\Hmin\mcA (h_{\gamma}^{-1}(\mathcal{A}^{-1}f) - 2\pi C_{\Sigma}^{\dagger}\mathbf{u}) = \vect{0}_m, \label{eq:winding-equations}\\
  & |f_e| \le a_{ij}|h_e(\gamma)|,\qquad\mbox{ for }e=(i,j)\in \mathcal{E},\label{eq:winding-flow-constraint}
\end{align}
\end{subequations}}
where $\prjcyc$ is the $\Hmin$-weighted cycle projection matrix. Note
that the $\mathbf{u}$-winding balance equation has no variable on the
$n$-torus $\torus^n$. The following theorem illustrates the connection
between solutions of $\mathbf{u}$-winding balance
equation~\eqref{eq:winding-form} and solutions of the
flow network problem~\eqref{eq:flow_form}.

\begin{theorem}[Equivalence of $\mathbf{u}$-winding balance equation and
  flow network problem]\label{thm:equiv-kur-flow} Consider the flow network
  problem~\eqref{eq:flow_form} for $(G,\{h_e\}_{e\in
    \mathcal{E}},\pactive,\gamma)$ and suppose that each flow function
  $h_e$, $e\in \mathcal{E}$, is monotone on 
  $[-\gamma,\gamma]$. Let $\Sigma$ be a cycle basis for $G$. Then, for $\mathbf{u}\in\Img(\mathbf{w}_{\Sigma})$
  and $f\in\real^m$, the following statements are equivalent:
  \begin{enumerate}
  \item\label{p1:kur_equi} there exists a unique $\theta\in
    \Omega^G_{\mathbf{u}}$, modulo rotations, such that $(f,\theta)$ is a
    solution for the flow network problem~\eqref{eq:flow_form}; and
  \item\label{p2:loop_flow_equi} $f$ is a solution for the
    $\mathbf{u}$-winding balance equation~\eqref{eq:winding-form}.
    \end{enumerate}
 \end{theorem}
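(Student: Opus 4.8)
The plan is to prove the two implications separately, using the polytopic characterization of Theorem~\ref{thm:linerization+correspondence} to translate between the phase variable $\theta\in\Omega^G_{\mathbf{u}}$ and the Euclidean variable $\mathbf{x}\in P_{\mathbf{u}}$, and using the kernel identity $\Ker(\prjcyc\Hmin\mcA)=\Img(B^{\top})$ (see Lemma~\ref{thm:cycle-projection}) to recognize equation~\eqref{eq:winding-equations} as a membership statement. The single algebraic observation driving everything is that, on the angle-constrained range, the extended flow function $h_{\gamma}$ coincides with the original flow functions and is invertible, so the flow equation~\eqref{eq:f-physics} can be rewritten as $h_{\gamma}^{-1}(\mcA^{-1}f)=B^{\top}\theta$.

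For \ref{p1:kur_equi}$\Rightarrow$\ref{p2:loop_flow_equi}, I would start from a solution $(f,\theta)$ of~\eqref{eq:flow_form} with $\theta\in\Omega^G_{\mathbf{u}}$. The angle constraint~\eqref{eq:f-constraints} gives $|(B^{\top}\theta)_e|\le\gamma$, so $h_{\gamma}(B^{\top}\theta)=\mcA^{-1}f$ and hence $h_{\gamma}^{-1}(\mcA^{-1}f)=B^{\top}\theta$. By Theorem~\ref{thm:linerization+correspondence}\ref{p1:diff} there is a unique $\mathbf{x}\in\vect{1}_n^{\perp}$ with $B^{\top}\theta=B^{\top}\mathbf{x}+2\pi C_{\Sigma}^{\dagger}\mathbf{u}$, whence $h_{\gamma}^{-1}(\mcA^{-1}f)-2\pi C_{\Sigma}^{\dagger}\mathbf{u}=B^{\top}\mathbf{x}\in\Img(B^{\top})=\Ker(\prjcyc\Hmin\mcA)$; applying $\prjcyc\Hmin\mcA$ yields~\eqref{eq:winding-equations}. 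Equation~\eqref{eq:f-KCL} is exactly~\eqref{eq:winding-balance}, and the bound~\eqref{eq:winding-flow-constraint} follows from $|\theta_i-\theta_j|\le\gamma$, the oddness of $h_e$, and monotonicity, since these give $|f_e|=a_{ij}|h_e(\theta_i-\theta_j)|\le a_{ij}|h_e(\gamma)|$.

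For \ref{p2:loop_flow_equi}$\Rightarrow$\ref{p1:kur_equi}, I would set $y:=h_{\gamma}^{-1}(\mcA^{-1}f)$. Equation~\eqref{eq:winding-equations} together with $\Ker(\prjcyc\Hmin\mcA)=\Img(B^{\top})$ forces $y-2\pi C_{\Sigma}^{\dagger}\mathbf{u}\in\Img(B^{\top})$, so there is a unique $\mathbf{x}\in\vect{1}_n^{\perp}$ with $y=B^{\top}\mathbf{x}+2\pi C_{\Sigma}^{\dagger}\mathbf{u}$. The flow bound~\eqref{eq:winding-flow-constraint}, pushed through the monotone invertible map $h_{\gamma}$, gives $\|y\|_{\infty}\le\gamma<\pi$, so $\mathbf{x}\in P_{\mathbf{u}}$. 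Theorem~\ref{thm:linerization+correspondence}\ref{p1.5:connec-comp} then provides a unique $[\theta]\in\Omega^G_{\mathbf{u}}/\Scircle$ with $B^{\top}\theta=y$; since $\|B^{\top}\theta\|_{\infty}\le\gamma$, the constraint~\eqref{eq:f-constraints} holds and $\mcA^{-1}f=h_{\gamma}(y)=h(B^{\top}\theta)$ recovers~\eqref{eq:f-physics}, while~\eqref{eq:winding-balance} supplies~\eqref{eq:f-KCL}. Uniqueness modulo rotations is inherited from the bijection of Theorem~\ref{thm:linerization+correspondence}, since $\mathbf{x}$ is determined by $f$ and $[\theta]$ is determined by $\mathbf{x}$.

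The main obstacle I anticipate is the careful bookkeeping at the boundary $|y_e|=\gamma$: one must verify that the flow bound~\eqref{eq:winding-flow-constraint} is \emph{exactly} equivalent to the angle constraint~\eqref{eq:f-constraints} under $h_{\gamma}$, which requires checking that the range of each $h_e$ over $[-\gamma,\gamma]$ is $[-|h_e(\gamma)|,|h_e(\gamma)|]$ regardless of whether $h_e$ increases or decreases, and that the strict inequality $\gamma<\pi$ is precisely what places $\mathbf{x}$ in the \emph{open} polytope $P_{\mathbf{u}}$ rather than merely its closure. The remaining steps are linear-algebraic identities already established for the cycle projection matrix.
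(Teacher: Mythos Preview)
Your proposal is correct and follows essentially the same route as the paper's own proof: both directions hinge on the identity $h_{\gamma}^{-1}(\mcA^{-1}f)=(B^{\top}\theta)$ on the constrained range, the polytopic characterization of Theorem~\ref{thm:linerization+correspondence}, and the kernel identity $\Ker(\prjcyc\Hmin\mcA)=\Img(B^{\top})$ from Lemma~\ref{thm:cycle-projection}. Your uniqueness argument is slightly more compressed than the paper's (which explicitly passes through $(B^{\top}\theta)=(B^{\top}\phi)$ via monotonicity before invoking the bijection), but it is equivalent, and your remark about the boundary case and the oddness of $h_e$ anticipates exactly the bookkeeping the paper handles.
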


\subsection{Fixed-point formulation and flow network solver}
\label{subsec:fpf}
The winding transcription~\eqref{eq:winding-form} simplifies the analysis
of flow network problem~\eqref{eq:flow_form} on the $n$-torus by reducing
the continuous variable $\theta\in \torus^n$ to the finite discrete
variable $\mathbf{w}_{\Sigma}(\theta)$. We now provide an equivalent
fixed-point formulation for the winding
transcription~\eqref{eq:winding-form} which can be analyzed using
contraction theory. We first define the space of $\pactive$-balanced flows
by
\begin{align*}
 \Fsd = \setdef{f\in \real^m}{Bf = \pactive}.
\end{align*}
To write the $\mathbf{u}$-winding balance equation~\eqref{eq:winding-form}
as a fixed-point problem, we define the {\color{black}\emph{$\mathbf{u}$-winding
  fixed-point}} map $\map{T_{\mathbf{u}}}{\Fsd}{\Fsd}$ by:
\begin{align*}
T_{\mathbf{u}}(f) = f - \prjcyc\Hmin\mcA \left( h_{\gamma}^{-1}(\mathcal{A}^{-1}f) - 2\pi C_{\Sigma}^{\dagger}\mathbf{u}\right),
\end{align*}
where $\prjcyc$ is the $\Hmin$-weighted cycle projection matrix. Using
the {\color{black}$\mathbf{u}$-winding fixed-point map $T_{\mathbf{u}}$}, we can write the $\mathbf{u}$-winding
balance equation~\eqref{eq:winding-form} as the following fixed-point problem:{\color{black}
\begin{subequations}\label{eq:fixed-point-form}
\begin{align}
    &T_{\mathbf{u}}(f) = f, \\
    &|f_e| \le a_{ij}|h_e(\gamma)|,\qquad\mbox{ for }e=(i,j)\in \mathcal{E}.
\end{align}
\end{subequations}}
{\color{black}The fixed-point problem~\eqref{eq:fixed-point-form} suggests
  the definition of the \emph{projection iteration}}:
\begin{align}
  &f^{(k+1)} = T_{\mathbf{u}}(f^{(k)}), \label{eq:seq_converge}\\
  &f^{(0)} \in \Fsd. \label{eq:seq_initial}
\end{align}
for finding solutions of the flow network problem~\eqref{eq:flow_form}. The
following theorem study the connection between solvability of the flow
network problem~\eqref{eq:flow_form} on the $n$-torus and the convergence
of the projection iteration.

\begin{theorem}[Solvability of flow network problem on the 
  $n$-torus]\label{thm:convergence_ifexists} Consider the flow network
  problem~\eqref{eq:flow_form} for $(G,\{h_e\}_{e\in
    \mathcal{E}},\pactive,\gamma)$ and suppose that each flow function
  $h_e$, $e\in \mathcal{E}$, is monotone on the interval
  $[-\gamma,\gamma]$. {\color{black}Let $\Sigma$ be a cycle basis for $G$,
    $\mathbf{u}\in\mathbb{Z}^{m-n+1}$, $f^{(0)}\in \Fsd$, and
    $\{f^{(k)}\}_{k\in \mathbb{Z}_{\ge 0}}$ be the sequence generated by
    the projection iteration~\eqref{eq:seq_converge} starting from
    $f^{(0)}$. Then, the following statements hold:
  \begin{enumerate}
  \item\label{p1:converge} there exists a unique $f_{\mathbf{u}}^*\in \Fsd$
    such that $\{f^{(k)}\}_{k\in \mathbb{Z}_{\ge 0}}$ converges to
    $f_{\mathbf{u}}^*$;
  \item\label{p2:speed} for every $k\in \mathbb{Z}_{\ge 0}$, we have 
    \begin{align*}
      \left\|f^{(k+1)}-f^{(k)}\right\|_{\Hmin\mathcal{A}}
      \;\le\; \left\|I_m - \Hmin\Hmax^{-1}\right\|^k_{\infty}\left\|T_{\mathbf{u}}(f^{(0)}) - f^{(0)}\right\|_{\Hmin\mathcal{A}}.
    \end{align*}
  \end{enumerate}}
  Moreover, the following statements are equivalent:
  \begin{enumerate}\setcounter{enumi}{2}
  \item\label{p1:check}  {\color{black}$\left|(f^*_{\mathbf{u}})_e\right|\le a_{ij}|h_e(\gamma)|$}, for every $e=(i,j)\in \mathcal{E}$;
  \item\label{p2:exists_f} $f_{\mathbf{u}}^*$ is the unique solution to the
    $\mathbf{u}$-winding balance equation~\eqref{eq:winding-form};
  \item\label{p3:exists_theta} for $\theta^* =
    L^{\dagger}B\mathcal{A}(h^{-1}_{\gamma}(\mathcal{A}^{-1}f^*_{\mathbf{u}})-2\pi
    C^{\dagger}_{\Sigma} \mathbf{u})$, the pair
    $(f_{\mathbf{u}}^*,\theta^*)$ is the unique solution for the flow network
    problem~\eqref{eq:flow_form} with $\theta^*\in \Omega^G_{\mathbf{u}}$.
  \end{enumerate}
\end{theorem}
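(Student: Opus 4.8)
The plan is to recognize the projection iteration~\eqref{eq:seq_converge} as a Picard iteration for the map $T_{\mathbf{u}}$ on the complete metric space $(\Fsd,\|\cdot\|_{\Hmin\mcA})$, to deduce statements~\ref{p1:converge} and~\ref{p2:speed} from the Banach fixed-point theorem, and then to match the resulting fixed point against the winding balance equation and the flow network problem to obtain the equivalences~\ref{p1:check}--\ref{p3:exists_theta}. First I would check that $T_{\mathbf{u}}$ maps $\Fsd$ into itself: since $\Img(\prjcyc)=\Ker(B)$ we have $B\prjcyc=0$, so $BT_{\mathbf{u}}(f)=Bf=\pactive$ for every $f\in\Fsd$. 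As $\Fsd$ is a closed affine subspace of $\real^m$, it is complete, and the contraction mapping theorem will apply once contractivity is in hand.

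For the contraction estimate, take $f,g\in\Fsd$ and set $v:=f-g\in\Ker(B)$, so that $\prjcyc v=v$. Applying the mean value theorem componentwise to the diagonal, monotone map $h_\gamma^{-1}$ produces a diagonal matrix $J$ (depending on $f,g$) whose $e$th entry is a reciprocal derivative lying between $(\Hmax)_{ee}^{-1}$ and $(\Hmin)_{ee}^{-1}$. Since $\mcA$ and $J$ are diagonal and $\prjcyc v=v$, this collapses the difference to the linearized identity $T_{\mathbf{u}}(f)-T_{\mathbf{u}}(g)=\prjcyc(I_m-\Hmin J)v$. The diagonal factor $I_m-\Hmin J$ has entries in $[0,\,1-(\Hmin)_{ee}(\Hmax)_{ee}^{-1}]$, hence $\infty$-norm at most $\rho:=\|I_m-\Hmin\Hmax^{-1}\|_{\infty}<1$. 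What remains is to absorb the cycle projection without inflating this rate. The structural fact I would exploit is that $\prjcyc\Hmin\mcA$ is symmetric---equivalently, $\prjcyc$ is the orthogonal projection onto $\Ker(B)$ in the inner product weighted by $(\Hmin\mcA)^{-1}$---which, together with $\prjcyc v=v$, lets one bound the restriction of $\prjcyc(I_m-\Hmin J)$ to $\Ker(B)$ by $\rho$ in the $\Hmin\mcA$ norm, giving $\|T_{\mathbf{u}}(f)-T_{\mathbf{u}}(g)\|_{\Hmin\mcA}\le\rho\|f-g\|_{\Hmin\mcA}$. I expect this weighted-norm estimate to be the main obstacle: naive submultiplicativity loses a factor equal to the $\Hmin\mcA$-operator-norm of $\prjcyc$ (which exceeds $1$ in general), so the sharp rate $\rho$ must come from the interaction between the projection's weighting and the diagonal factor rather than from bounding them separately.

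Granting contractivity, the Banach fixed-point theorem furnishes a unique fixed point $f^*_{\mathbf{u}}\in\Fsd$ to which every Picard sequence converges, proving~\ref{p1:converge}. The a priori estimate~\ref{p2:speed} then follows by telescoping: $\|f^{(k+1)}-f^{(k)}\|_{\Hmin\mcA}=\|T_{\mathbf{u}}(f^{(k)})-T_{\mathbf{u}}(f^{(k-1)})\|_{\Hmin\mcA}\le\rho\|f^{(k)}-f^{(k-1)}\|_{\Hmin\mcA}$, iterated down to $\rho^{k}\|T_{\mathbf{u}}(f^{(0)})-f^{(0)}\|_{\Hmin\mcA}$.

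Finally, for the equivalences I would match the fixed point to the problem formulations. The equation $T_{\mathbf{u}}(f^*_{\mathbf{u}})=f^*_{\mathbf{u}}$ is exactly~\eqref{eq:winding-equations}, and $f^*_{\mathbf{u}}\in\Fsd$ is exactly~\eqref{eq:winding-balance}; hence $f^*_{\mathbf{u}}$ solves the full $\mathbf{u}$-winding balance equation~\eqref{eq:winding-form} if and only if it additionally satisfies the flow constraint~\eqref{eq:winding-flow-constraint}, which is precisely~\ref{p1:check}. Because $f^*_{\mathbf{u}}$ is the \emph{unique} fixed point, any solution of~\eqref{eq:winding-form} must equal it, giving~\ref{p1:check}$\iff$\ref{p2:exists_f}. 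The equivalence~\ref{p2:exists_f}$\iff$\ref{p3:exists_theta} is then Theorem~\ref{thm:equiv-kur-flow}, which identifies solutions of the winding balance equation with solutions of the flow network problem~\eqref{eq:flow_form} having $\theta\in\Omega^G_{\mathbf{u}}$ (unique modulo rotation). To pin down the stated representative, I would use that~\eqref{eq:winding-equations} forces $h_\gamma^{-1}(\mcA^{-1}f^*_{\mathbf{u}})-2\pi C_{\Sigma}^{\dagger}\mathbf{u}\in\Img(B^\top)$ (since $\Ker(\prjcyc\Hmin\mcA)=\Img(B^\top)$); writing this vector as $B^\top x$ for the unique $x\in\vect{1}_n^{\perp}$ and inverting with $L^\dagger B\mcA$ (using $L^\dagger B\mcA B^\top=L^\dagger L$, the identity on $\vect{1}_n^{\perp}$) yields $\theta^*=x=\ldagba\big(h_\gamma^{-1}(\mcA^{-1}f^*_{\mathbf{u}})-2\pi C_{\Sigma}^{\dagger}\mathbf{u}\big)$, in agreement with the polytopic characterization of Theorem~\ref{thm:linerization+correspondence}.
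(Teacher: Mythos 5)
Your overall route is the same as the paper's: show that $T_{\mathbf{u}}$ maps $\Fsd$ into itself, establish a contraction estimate with rate $\rho:=\left\|I_m-\Hmin\Hmax^{-1}\right\|_{\infty}$ by combining a mean-value bound on the diagonal factor with a norm-one bound on the cycle projection, invoke the Banach fixed-point theorem for the first two parts, read off (iii)$\iff$(iv) from the fact that the fixed-point equation together with $f\in\Fsd$ is exactly \eqref{eq:winding-balance}--\eqref{eq:winding-equations} plus the constraint \eqref{eq:winding-flow-constraint}, and obtain (iv)$\iff$(v) and the formula for $\theta^*$ from Theorem~\ref{thm:equiv-kur-flow} together with $L^{\dagger}B\mcA B^{\top}=L^{\dagger}L$. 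All of that matches the paper step for step.

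The one place you diverge is the weighted-norm treatment of $\prjcyc$, and there your structural claim is actually the correct one, but you stop short of closing the argument. The paper bounds $\|\prjcyc(I_m-\Hmin\nabla h_{\gamma}^{-1})\|_{\Hmin\mcA}$ by submultiplicativity and asserts that $D^{1/2}\prjcyc D^{-1/2}$ with $D=\Hmin\mcA$ is symmetric idempotent, hence of $2$-norm one; in fact the symmetric conjugate is $D^{-1/2}\prjcyc D^{1/2}$ (equivalently, $\prjcyc\Hmin\mcA$ is symmetric, so $\prjcyc$ is orthogonal in the $(\Hmin\mcA)^{-1}$-inner product, exactly as you say), and $\|\prjcyc\|_{\Hmin\mcA}$ can exceed one---already on a triangle with $\Hmin\mcA=\diag(1,1,4)$ it equals about $1.55$. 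However, your proposed repair, namely that the ``interaction between the projection's weighting and the diagonal factor'' still yields the rate $\rho$ in the $\Hmin\mcA$-norm, is asserted rather than proved, and I do not see how to obtain it with constant one: what the orthogonality gives is that $\prjcyc(I_m-\Hmin J)$ restricted to $\Ker(B)$ is self-adjoint in the $(\Hmin\mcA)^{-1}$-inner product with operator norm, hence spectral radius, at most $\rho$; that yields geometric convergence at rate $\rho$ in every norm, but not the constant-one estimate of part (ii) in the stated norm. The clean way to finish is to run the entire contraction in the $(\Hmin\mcA)^{-1}$-weighted norm, where everything closes in one line: $\|\prjcyc\|_{(\Hmin\mcA)^{-1}}=1$ and the diagonal factor commutes with the diagonal weight, so it too has norm at most $\rho$ there. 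Part (i) is unaffected, since convergence is norm-independent on $\real^m$, and part (ii) then holds verbatim with the inverse weight. In short: same architecture as the paper, a correct diagnosis of the delicate step (one the paper itself glosses over), but the step you identify as the main obstacle is left open rather than resolved.
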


This theorem establishes that the projection
iteration~\eqref{eq:seq_converge} correctly computes the solution of the
$\mathbf{u}$-winding balance equation~\eqref{eq:winding-form}, if one
exists. Moreover, the iteration does so with a convergence rate equal to
$\|I_{m}-\Hmin \Hmax^{-1}\|_{\infty}$ and, remarkably, this rate is
independent of the network size.

We are now finally ready to present an algorithm that summarizes numerous
previous results.  The procedure proposed in
Algorithm~\ref{alg:active_solver} is guaranteed to compute all the
solutions of the flow network problem~\eqref{eq:flow_form} on the
$n$-torus, as we summarize in the next corollary.
  
  \smallskip
  
  \begin{algorithm}
    \caption{\textbf{Flow Network Solver}}
    \label{alg:active_solver}
    \begin{algorithmic}[1]
      \REQUIRE{a cyclic connected weighted undirected graph $G$, a
        supply/demand vector $\pactive$, an angle $\gamma\in [0,\pi)$,
      and a tolerance $\rho>0$ for accuracy of the flows solutions. }
      \ENSURE{all flows for~\eqref{eq:flow_form}}

      \smallskip
      
    \STATE compute a basis $\{\sigma_1,\ldots,\sigma_{m-n+1}\}$ for the
    cycle space of $G$
    
    \FOR{each candidate winding vector $\mathbf{u} =
      (w_1,\ldots,w_{m-n+1})^{\top}$ with $\ds w_ i \le
      \left\lfloor{\frac{\gamma n_{\sigma_i}}{2\pi}}\right\rfloor $}

    
    \STATE set $k\gets0$ and $f^{(0)} \gets \vect{0}_m$

    \REPEAT

    \STATE $f^{(k+1)} \gets T_{\mathbf{u}}(f^{(k)})$

    \STATE $k \gets k+1$
    
    \UNTIL $\left\|f^{(k)}-f^{(k-1)}\right\|_{\infty} < \rho$
    \IF{$\left| f^{(k)}_e \right| \leq
      a_{ij}|h_e(\gamma)|$, for every $e=(i,j)\in \mathcal{E}$} 
    \RETURN $(f^{(k)},\theta^*=L^{\dagger}B\mathcal{A}(h^{-1}_{\gamma}(\mathcal{A}^{-1}f^{(k)})-2\pi
    C^{\dagger}_{\Sigma} \mathbf{u}))$, as the unique solution
    of~\eqref{eq:flow_form} with  $\theta^*\in\Omega^{G}_{\mathbf{u}}$.
    \ELSE
    \RETURN there exists no solution $(f,\theta)$ for~\eqref{eq:flow_form} with $\theta\in\Omega^{G}_{\mathbf{u}}$. 
    \ENDIF
    \ENDFOR
    \end{algorithmic}
  \end{algorithm}
  \smallskip

\begin{corollary}[Complete solver for flow network problem]
  Consider the flow network problem~\eqref{eq:flow_form} for
  $(G,\{h_e\}_{e\in\mathcal{E}},\pactive,\gamma)$ and suppose that
  each flow function $h_e$, $e\in \mathcal{E}$, is monotone on the interval
  $[-\gamma,\gamma]$. Then the~\textbf{Flow Network
    Solver} in Algorithm~\ref{alg:active_solver} finds all solutions
  $(f,\theta)$ for the flow network problem~\eqref{eq:flow_form}.
\end{corollary}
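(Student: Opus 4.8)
The plan is to assemble the corollary from three ingredients already established: the partition of the torus by winding cells (Theorem~\ref{thm:partition}), the per-cell solvability analysis of the projection iteration (Theorem~\ref{thm:convergence_ifexists}), and the at-most-uniqueness of solutions within a cell (Theorem~\ref{thm:at_most_unique}). The guiding idea is that the outer loop of Algorithm~\ref{alg:active_solver} enumerates every winding cell that can possibly contain a solution, while the inner iteration decides, for each enumerated cell, whether a solution exists and computes it when it does.

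First I would show that the enumeration in the outer loop is complete. Let $(f,\theta)$ be any solution of~\eqref{eq:flow_form}. The angle constraint~\eqref{eq:f-constraints} gives $|\theta_i-\theta_j|\le\gamma<\pi$ on every edge, so $\theta\in\torus^n_0$; hence, by Theorem~\ref{thm:partition}, $\theta$ belongs to exactly one winding cell $\Omega^G_{\mathbf{u}}$ with $\mathbf{u}=\mathbf{w}_\Sigma(\theta)$. I would then bound $\mathbf{u}$ componentwise: for each basis cycle $\sigma_i$ we have $2\pi u_i=(C_\Sigma B^\top\theta)_i$, which is a sum of $n_{\sigma_i}$ counterclockwise differences each of magnitude at most $\gamma$, so $|u_i|\le n_{\sigma_i}\gamma/(2\pi)$; since $u_i$ is an integer by Theorem~\ref{thm:wind_prop}\ref{p1:wind_integer_bound}, this sharpens to $|u_i|\le\lfloor n_{\sigma_i}\gamma/(2\pi)\rfloor$ (the componentwise form of Corollary~\ref{cor:graphs-short-cycles}\ref{p2:shortbasis}). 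Thus the winding vector of every solution is among the finitely many candidates enumerated by the outer loop, so no solution-bearing cell is skipped.

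Next I would fix a candidate $\mathbf{u}$ and invoke Theorem~\ref{thm:convergence_ifexists}. The projection iteration from a balanced initial flow $f^{(0)}\in\Fsd$ converges to the unique fixed point $f^*_{\mathbf{u}}$, and the geometric rate bound in part~\ref{p2:speed} --- with contraction factor $\|I_m-\Hmin\Hmax^{-1}\|_\infty<1$ --- ensures the stopping test $\|f^{(k)}-f^{(k-1)}\|_\infty<\rho$ is met after finitely many steps, so the inner loop terminates. The chain of equivalences \ref{p1:check}$\iff$\ref{p2:exists_f}$\iff$\ref{p3:exists_theta} then guarantees that the flow-feasibility check $|f^{(k)}_e|\le a_{ij}|h_e(\gamma)|$ holds for all edges exactly when $\Omega^G_{\mathbf{u}}$ contains a solution; in that case the returned pair $(f^{(k)},\theta^*)$ with $\theta^*=L^\dagger B\mcA(h_\gamma^{-1}(\mcA^{-1}f^{(k)})-2\pi C_\Sigma^\dagger\mathbf{u})$ is the unique solution in that cell, and otherwise the algorithm correctly reports that no solution lives in $\Omega^G_{\mathbf{u}}$.

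Finally I would combine the pieces. By Theorem~\ref{thm:at_most_unique} each winding cell hosts at most one solution, so iterating over cells produces each solution at most once and never duplicates one; by the completeness of the enumeration every solution lies in a visited cell; and by the per-cell correctness each occupied cell yields its solution while each empty cell is discarded. Therefore the algorithm outputs precisely the solution set of~\eqref{eq:flow_form}. I expect the only delicate points to be (i) matching the open winding cells against the nonstrict angle constraint --- a solution on the boundary $|\theta_i-\theta_j|=\gamma$ still lies strictly inside $\torus^n_0$ and is captured by the nonstrict test $|f_e|\le a_{ij}|h_e(\gamma)|$ --- and (ii) confirming that the tolerance-based stopping rule does not compromise completeness, which follows from the explicit geometric convergence rate of Theorem~\ref{thm:convergence_ifexists}\ref{p2:speed}.
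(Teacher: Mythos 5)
Your proposal is correct and assembles exactly the ingredients the paper relies on: the paper states this corollary as a direct summary of Theorem~\ref{thm:partition}, Corollary~\ref{cor:graphs-short-cycles} (for completeness of the winding-vector enumeration), Theorem~\ref{thm:convergence_ifexists} (for per-cell convergence and the feasibility test), and Theorem~\ref{thm:at_most_unique}, without writing out a separate proof. Your two ``delicate points'' at the end (boundary angles and the finite tolerance $\rho$) are genuine subtleties that the paper also leaves implicit, so your argument matches the intended one in both substance and structure.
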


\subsection{Computational complexity of the flow network solver}\label{subsec:compcomp}

{\color{black} The Flow Network Solver~\ref{alg:active_solver} consists of
  two main components: i) an algorithm for computing a cycle basis $\Sigma$
  (step \algostep{1}), and ii) the computation of the projection iteration
  for every feasible winding vector (for-loop at steps
  \algostep{2}-\algostep{11}). Interestingly, the computational complexity
  of these two components are interconnected. Choosing a computationally
  light algorithm for computing the cycle basis (step \algostep{1}) might
  give rise to a cycle basis with long cycles, a large number of feasible
  winding vectors, and, in turn, to a large number of executions of the
  projection iteration (for-loop at step \algostep{2}). On the other hand,
  computing the cycle basis that minimizes the cardinality of feasible
  winding vectors can be computationally heavy. Characterizing the right
  trade-off is out of the scope of this paper and we leave it to the
  future research. In this section, we focus on the minimum cycle basis
  algorithms for step \algostep{1} (see Remark~\ref{rem:cyclebasis}) and
  provide a detailed study of the run time of each step of the Flow Network
  Solver~\ref{alg:active_solver}. We assume that each mathematical
  operation (addition, subtraction, multiplication, and division) is a
  single floating-point operation (flop).

  \begin{theorem}[Computational complexity of the Flow Network Solver]
    \label{thm:compcomp}
    Consider the flow network problem
    $(G,\{h_e\}_{e\in\mathcal{E}},\pactive,\gamma)$ with monotone flow
    functions $h_e$, $e\in \mathcal{E}$, on the interval
    $[-\gamma,\gamma]$. Let $\Sigma = \{\sigma_1,\ldots,\sigma_{m-n+1}\}$
    be a cycle basis for $G$. The Flow Network
    Solver~\ref{alg:active_solver} has the following properties:
    \begin{enumerate}
    \item\label{p1:numberiter} the number of times the for-loop at step
      \algostep{2} is invoked is
      $\prod_{i=1}^{m-n+1}\left\lfloor{\frac{\gamma
          n_{\sigma_i}}{2\pi}}\right\rfloor$;
    \item\label{p2:compeachiter} the run time of (one execution of) steps
      \algostep{3}-\algostep{11} (i.e., the projection
      iteration~\eqref{eq:seq_converge} with its stopping condition) is
      $\mathcal{O}(\log(\rho^{-1}) n^3)$; and
    \item\label{p3:compiter} in summary, the run time of the for-loop at
      steps \algostep{2}-\algostep{11} is
      \begin{align*}
        \mathcal{O} \left( \log(\rho^{-1}) n^3
        \left(\tfrac{\gamma}{2\pi}\right)^{m-n+1} n_{\sigma_1}\ldots
        n_{\sigma_{m-n+1}}\right).
         \end{align*}
       \end{enumerate}
    Moreover, adopting the modified Horton algorithm in~\cite{KM-DM:09}
    (see Remark~\ref{rem:cyclebasis}) to compute the minimum cycle basis,
    \begin{enumerate}\setcounter{enumi}{3}
    \item\label{p4:comptotal} the run time of the Flow Network
      Solver~\ref{alg:active_solver} is
      \begin{align*}
        \mathcal{O}\left(m^2n/\log(n) + n^2m +\log(\rho^{-1}) n^3
        \left(\tfrac{3\gamma (n-1)(n-2)}{2\pi(m-n+1)}\right)^{m-n+1}\right).
      \end{align*}
    \end{enumerate}
  \end{theorem}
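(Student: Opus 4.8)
The plan is to dispatch the four claims in order, exploiting that (iii) is merely the product of (i) and (ii), and that (iv) combines (iii) with the cost of Step~1. For claim \ref{p1:numberiter} I would simply count the index set of the for-loop. By construction the loop enumerates the candidate winding vectors $\mathbf{u}=(w_1,\ldots,w_{m-n+1})^{\top}$ retained after discarding those ruled infeasible by Corollary~\ref{cor:graphs-short-cycles}\ref{p2:shortbasis}, namely those whose coordinates satisfy the per-cycle bound $|w_i|\le\lfloor \gamma n_{\sigma_i}/(2\pi)\rfloor$. A direct count of this index set, collecting the admissible range of each coordinate independently, gives the product $\prod_{i=1}^{m-n+1}\lfloor \gamma n_{\sigma_i}/(2\pi)\rfloor$; the only care needed is to match the loop's counting convention with the stated product.

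The substance is claim \ref{p2:compeachiter}, the cost of one execution of Steps~3--11, which I would split into (a) the number of repeat-iterations and (b) the cost of each. For (a), Theorem~\ref{thm:convergence_ifexists}\ref{p2:speed} supplies the geometric decay $\|f^{(k+1)}-f^{(k)}\|_{\Hmin\mcA}\le r^{k}\,\|T_{\mathbf{u}}(f^{(0)})-f^{(0)}\|_{\Hmin\mcA}$ with contraction factor $r=\|I_m-\Hmin\Hmax^{-1}\|_{\infty}<1$ that is independent of both $\mathbf{u}$ and $n$; passing from the weighted norm to the $\infty$-norm stopping test through a fixed norm-equivalence constant, the condition $\|f^{(k)}-f^{(k-1)}\|_{\infty}<\rho$ is reached after $\mathcal{O}(\log(\rho^{-1}))$ iterations. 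For (b), I would note that $\mcA,\Hmin,\Hmax$ are diagonal and $B$ is sparse with $2m$ nonzeros, so every product with these, together with the componentwise evaluation of $h_{\gamma}^{-1}$, costs $\mathcal{O}(m)$; the one dense primitive inside $T_{\mathbf{u}}$ is the application of the pseudoinverse $(B\Hmin\mcA B^{\top})^{\dagger}$ of an $n\times n$ matrix, precomputed once at cost $\mathcal{O}(n^{3})$ and thereafter applied at cost $\mathcal{O}(n^{2})$. The $\mathbf{u}$-dependent vector $2\pi C_{\Sigma}^{\dagger}\mathbf{u}$ and the final reconstruction $\theta^{*}=L^{\dagger}B\mcA(\cdots)$ reuse the precomputed factorizations of $C_{\Sigma}^{\dagger}$ and $L^{\dagger}$. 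Bounding $m\le\binom{n}{2}=\mathcal{O}(n^{2})$ to absorb the $\mathcal{O}(m)$ terms into $\mathcal{O}(n^{3})$ then yields $\mathcal{O}(\log(\rho^{-1})\,n^{3})$ per execution. I expect the main obstacle to be precisely here: the careful bookkeeping of which quantities are computed once globally, once per $\mathbf{u}$, and once per inner iteration, and verifying that each dense step is genuinely bounded by $n^{3}$ rather than by the larger $m^{2}$.

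Claim \ref{p3:compiter} is then immediate: multiplying the count from \ref{p1:numberiter} by the per-execution cost from \ref{p2:compeachiter} and using $\prod_i\lfloor \gamma n_{\sigma_i}/(2\pi)\rfloor\le(\gamma/2\pi)^{m-n+1}\,n_{\sigma_1}\cdots n_{\sigma_{m-n+1}}$ gives the stated estimate. For claim \ref{p4:comptotal} I would add the cost of Step~1: by the modified Horton algorithm of~\cite{KM-DM:09} (Remark~\ref{rem:cyclebasis}) the minimum cycle basis is obtained in $\mathcal{O}(m^{2}n/\log(n)+n^{2}m)$, producing the first two terms. To collapse the for-loop term, I would apply the arithmetic--geometric mean inequality $\prod_i n_{\sigma_i}\le\big(\tfrac{1}{m-n+1}\sum_i n_{\sigma_i}\big)^{m-n+1}$ and then invoke Horton's bound~\cite[Theorem 6]{JDH:87} on the total length $\sum_i n_{\sigma_i}$ of a minimum cycle basis to replace the average cycle length by $\mathcal{O}((n-1)(n-2)/(m-n+1))$; substituting into \ref{p3:compiter} reproduces the factor $\big(\tfrac{3\gamma(n-1)(n-2)}{2\pi(m-n+1)}\big)^{m-n+1}$, and summing with the basis-computation cost yields \ref{p4:comptotal}. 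The remaining effort is purely the constant-chasing in this last substitution.
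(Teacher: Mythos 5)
Your proposal follows the paper's proof in structure for all four claims: the same product count for the for-loop, the same split of one execution of steps \algostep{3}--\algostep{11} into an $\mathcal{O}(\log(\rho^{-1}))$ iteration count (from the contraction rate in Theorem~\ref{thm:convergence_ifexists}\ref{p2:speed}) times an $\mathcal{O}(n^3)$ per-iteration cost dominated by solving a linear system in the $n\times n$ matrix $B\Hmin\mcA B^{\top}$, and the same AM--GM plus Horton-bound substitution for part~\ref{p4:comptotal}. (The paper re-solves that system by LU each iteration at $\mathcal{O}(n^3)$ rather than factoring once and back-substituting at $\mathcal{O}(n^2)$; either way the stated bound holds.)

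There is, however, one concrete gap in your handling of the term $2\pi C_{\Sigma}^{\dagger}\mathbf{u}$, which you propose to obtain by ``reusing the precomputed factorization of $C_{\Sigma}^{\dagger}$.'' The matrix $C_{\Sigma}^{\dagger}$ is $m\times(m-n+1)$, so even granting a one-time global precomputation, the matrix--vector product $C_{\Sigma}^{\dagger}\mathbf{u}$ costs $\mathcal{O}(m(m-n+1))$ per winding vector, which for dense graphs ($m=\Theta(n^2)$) is $\Theta(n^4)$ and exceeds the $\mathcal{O}(\log(\rho^{-1})\,n^3)$ bound you are trying to prove in part~\ref{p2:compeachiter}; the precomputation of the pseudoinverse itself is also nontrivial to charge anywhere. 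The paper avoids this with a dedicated lemma (Lemma~\ref{lem:graph-comp}): since $\Ker(C_{\Sigma})=\Img(B^{\top})=\Ker(\prjcyc\Hmin\mcA)$, one may replace $C_{\Sigma}^{\dagger}\mathbf{u}$ inside $\prjcyc\Hmin\mcA(\cdot)$ by \emph{any} solution $\mathbf{z}$ of $C_{\Sigma}\mathbf{z}=\mathbf{u}$, and such a $\mathbf{z}$ is computable in $\mathcal{O}(nm)=\mathcal{O}(n^3)$ by passing to the fundamental cycle basis of a spanning tree and exploiting that each row of the change-of-basis matrix has at most $n$ nonzero entries. You need this observation, or an equally cheap surrogate for $C_{\Sigma}^{\dagger}\mathbf{u}$, to close part~\ref{p2:compeachiter} for general graphs.
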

  Several remarks are in order.

  \begin{remark}[Bounds on the run time of Flow Network Solver]
    \begin{enumerate}
    \item Theorem~\ref{thm:compcomp}\ref{p4:comptotal} only provides
      an asymptotic upper bound on the run time of Flow Network
      Solver~\ref{alg:active_solver}. In practice, for sparse graphs
      with few number of cycles, this asymptotic bound is conservative. 
     
    \item The run time for computing a minimal cycle basis
      $\Sigma = (\sigma_1,\ldots,\sigma_{m-n+1})$ in step \algostep{2}
      is polynomial in the number of nodes of the network (see
      Remark~\ref{rem:cyclebasis}).
      \item For a given
      winding vector, the run time of the projection
      iteration~\eqref{eq:seq_converge} is cubic in the number of nodes of the network.
      \item For Kuramoto coupled
      oscillators~\eqref{eq:kuramoto-assoc} with identical zero
      natural frequencies, \cite{RT:15} proves that the problem of
      finding non-zero stable equilibrium points is
      NP-hard. Moreover, \cite{RT:15} shows that the problem remains
      NP-hard if it is restricted to finding non-zero equilibrium points satisfying $|\theta_i-\theta_j|<\frac{\pi}{2}$, for every
      adjacent nodes $(i,j)\in \mathcal{E}$. Indeed,
      following a similar argument as in~\cite{RT:15}, one can show that the
      problem of finding all solutions of the flow network problem~\eqref{eq:flow_form} on
      the $n$-torus is NP-hard.
      
    \item For networks with specific topologies, the run time of Flow
      Network Solver~\ref{alg:active_solver} can be exponential. This is illustrated in
      Example~\ref{ex:expo}. The
      key idea is that, for appropriate network
      families, the number of feasible winding vectors
      $(w_1,\ldots,w_{m-n+1})^{\top}$ with
      $\ds w_ i \le \left\lfloor{{\gamma
            n_{\sigma_i}}/{2\pi}}\right\rfloor$ grows exponentially
      with the number of nodes in the network. 
\end{enumerate}
\end{remark}

  \begin{example}[Exponential run time for the Flow Network Solver]\label{ex:expo}
    Consider the flow network problem
    $(G,\{h_e\}_{e\in\mathcal{E}},\vect{0}_n,\gamma)$ with graph $G$ as
    given in Figure~\ref{fig:graph-loop}, flow function $h_e(\cdot) =
    \sin(\cdot)$, for every $e\in \mathcal{E}$, and $\frac{2\pi}{5} \le
    \gamma \le \frac{\pi}{2}$.

    \begin{figure}[ht]\centering
        \includegraphics[width=.75\linewidth]{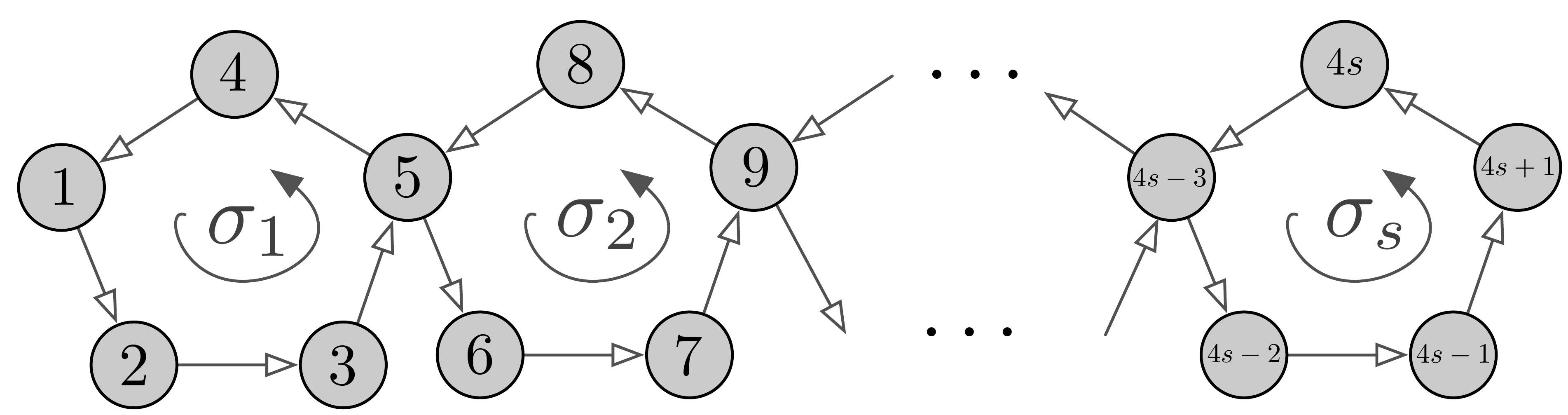}
        \caption{The graph $G$ with $n = 4s+1$ nodes, $m = 5s$ edges, and
          $s$ cycles.}\label{fig:graph-loop}
      \end{figure}
    We use the Flow Network Solver~\ref{alg:active_solver} to find all the
    solutions of the flow network problem~\eqref{eq:flow_form} on the
    $n$-torus. First note that $h_e(\cdot) = \sin(\cdot)$ is an odd
    monotone function on the interval $[-\gamma,\gamma]$. Thus, by
    Theorem~\ref{thm:compcomp}\ref{p2:compeachiter}, the run time of one
    execution of the steps~\algostep{3}-\algostep{11} is
    $\mathcal{O}(\log(\rho^{-1})n^3) $. The only cycle basis for the graph $G$
    is $\Sigma = (\sigma_1,\ldots,\sigma_{s})$. Therefore, the number of
    times the for-loop in steps~\algostep{2}-\algostep{11} is invoked is
    $\prod_{i=1}^{s} \left\lfloor{\frac{\gamma
        n_{\sigma_i}}{2\pi}}\right\rfloor = 3^s =
    \left(\sqrt[\leftroot{-2}\uproot{2}4]{3}\right)^{n-1}$. As a result,
    the run time of the Flow Network Solver~\ref{alg:active_solver} is
    $\mathcal{O}\left(\log(\rho^{-1})n^3\left(\sqrt[\leftroot{-2}\uproot{2}4]{3}\right)^{n-1}\right)$. Moreover,
    one can show that the above flow network problem has $3^s$
    solutions. Thus, the Flow Network Solver~\ref{alg:active_solver} does
    not find all the solutions of above flow network problem in
    polynomial time.
  \end{example}}

\subsection{Comparison with existing methods}

{\color{black} We conclude this section by comparing the Flow Network
  Solver~\ref{alg:active_solver} with two existing numerical algorithms for
  computing solutions of flow networks on the $n$-torus. The first algorithm is the 
  holomorphic embedding method (HELM), which is proposed in~\cite{AT:12} and further 
  developed in~\cite{SR-YF-DJT-MKS:16} to compute all solutions of power flow
  equations. The second algorithm is a numerical method based on parameter homotopy, which
  is proposed in~\cite{DM-NSD-FD:15} to compute all the equilibria of the Kuramoto
  model.

  Our approach differs from these two methods in several ways.
  First, these methods do not characterize the geometry of winding cells
  and do not explain the relationships between winding numbers, solutions,
  and loop flows. On the other hand, our Flow Network
  Solver~\ref{alg:active_solver} provides a geometric localization of
  the solutions based on their winding vectors. 
  Second, while \cite{AT:12} and \cite{DM-NSD-FD:15} do not provide a
  computational complexity analysis of HELM and parameter homotopy, the
  Flow Network Solver~\ref{alg:active_solver}, based upon (i) computing a
  cycle basis and (ii) performing a Banach contraction, lends itself to
  straightforward complexity analysis.
  Third, both HELM and parameter homotopy method use the sinusoidal form
  of the flow function to reformulate the problem as a polynomial
  system. Therefore, these methods are not extendable to flow networks on
  the $n$-torus with arbitrary flow functions.
  Finally, as presented in~\cite{AT:12} and~\cite{DM-NSD-FD:15}, HELM
  and the parameter homotopy method ignore the angle
  constraints~\eqref{eq:f-constraints} and aim to find all solutions
  to equations~\eqref{eq:f-KCL} and~\eqref{eq:f-physics} by
  reformulating them as a polynomial system. Thus, for flow networks
  where the number of solutions of the polynomial system is much larger than the number of solutions
  of the flow network problem~\eqref{eq:flow_form}, these methods are
  not computationally efficient.}

\myclearpage

\section{Numerical experiments and applications}
\label{sec:numerical-experiments}

In this section, we numerically study the solutions of the active power
flow equations~\eqref{eq:active-power-flow} with the thermal
constraint~\eqref{eq:thermal_constraints} as a flow network problem on the
$n$-torus.

\subsection{Loop flows in a simple cycle}\label{sec:ex-ring-power}

In this part, we consider two simple networks with the same underlying
graph $G$, the same edge weight matrix $\mathcal{A}$, and different balanced power supply/demand vectors $\wpactive$ as shown in
Figure~\eqref{fig:inject}. For each of these networks, we scale the
power transmission by a scalar $P\in \real_{\ge 0}$, that is, we
consider the balanced power supply/demand vectors $\pactive =
P\wpactive$. 

\begin{figure}[!htb]\centering
  \includegraphics[width=0.8\linewidth]{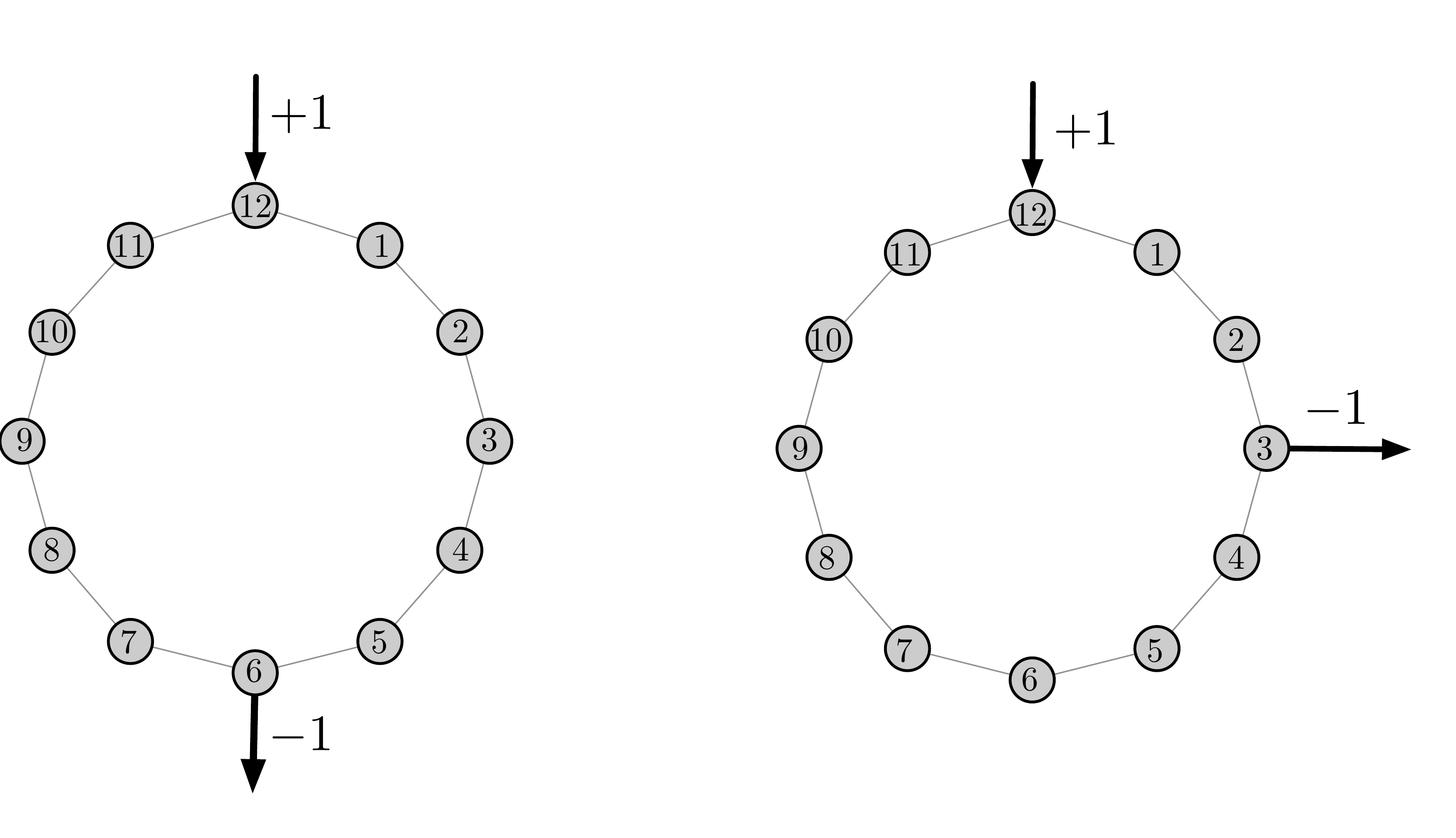}
  \caption{Example networks. Left image: $12$-node ring graph with weight
    matrix $\mcA=I_{12}$ and symmetric power profile with $\wpactive=
    [\vectorzeros[5]^\top,-1,\vectorzeros[5]^\top,1]^{\top}$. Right image:
    $12$-node ring graph with weight matrix $\mcA=I_{12}$ and asymmetric power
    profile with $\wpactive=
    [\vectorzeros[2]^\top,-1,\vectorzeros[8]^\top,1]^{\top}$.}
  \label{fig:inject}
\end{figure}

Using Theorem~\ref{cor:graphs-short-cycles}, the active power flow
equations~\eqref{eq:active-power-flow} with thermal
constraints~\eqref{eq:thermal_constraints} can only have solutions
$(f,\theta)$ with $|w_{\sigma}(\theta)|\le 3$. For every winding number
$u\in \{0,\pm1,\pm2,\pm3\}$, we define the \emph{power transmission capacity (PTC)} of
the network at winding number $u$ as the maximum $P$ for which the active
power flow equations~\eqref{eq:active-power-flow} with thermal
constraints~\eqref{eq:thermal_constraints} have a solutions with winding
number $u$. Moreover, for every winding number $u\in \{0,\pm1,\pm2,\pm3\}$,
we define the maximum network congestion at winding number $u$ by
\begin{align*}
  \max_{e=(i,j)\in \mathcal{E}} |a^{-1}_{ij}f_{e}|
\end{align*}
where $(f,\theta)\in\real^m\times \torus^n$ is the solution for the active power flow
equations~\eqref{eq:active-power-flow} with $w_{\sigma}(\theta) =u$.

We study the effect of scaling the power transmission on the solutions
of the active power flow equations~\eqref{eq:active-power-flow} with
thermal constraints~\eqref{eq:thermal_constraints}. We start from $P=0$ and increase $P$ by increment of $5\times
10^{-6}$ until the given solution of the active power flow ceases to exist. For each $P$, we use the Algorithm~\ref{alg:active_solver}
with tolerance $10^{-6}$ to compute the solutions of the active power
flow equations~\eqref{eq:active-power-flow} with thermal constraints~\eqref{eq:thermal_constraints}. The results of this simulation are shown in Figure~\ref{fig:pc-ring-symmetry}.  

\begin{figure}[!htb]\centering
  \includegraphics[width=0.7\linewidth]{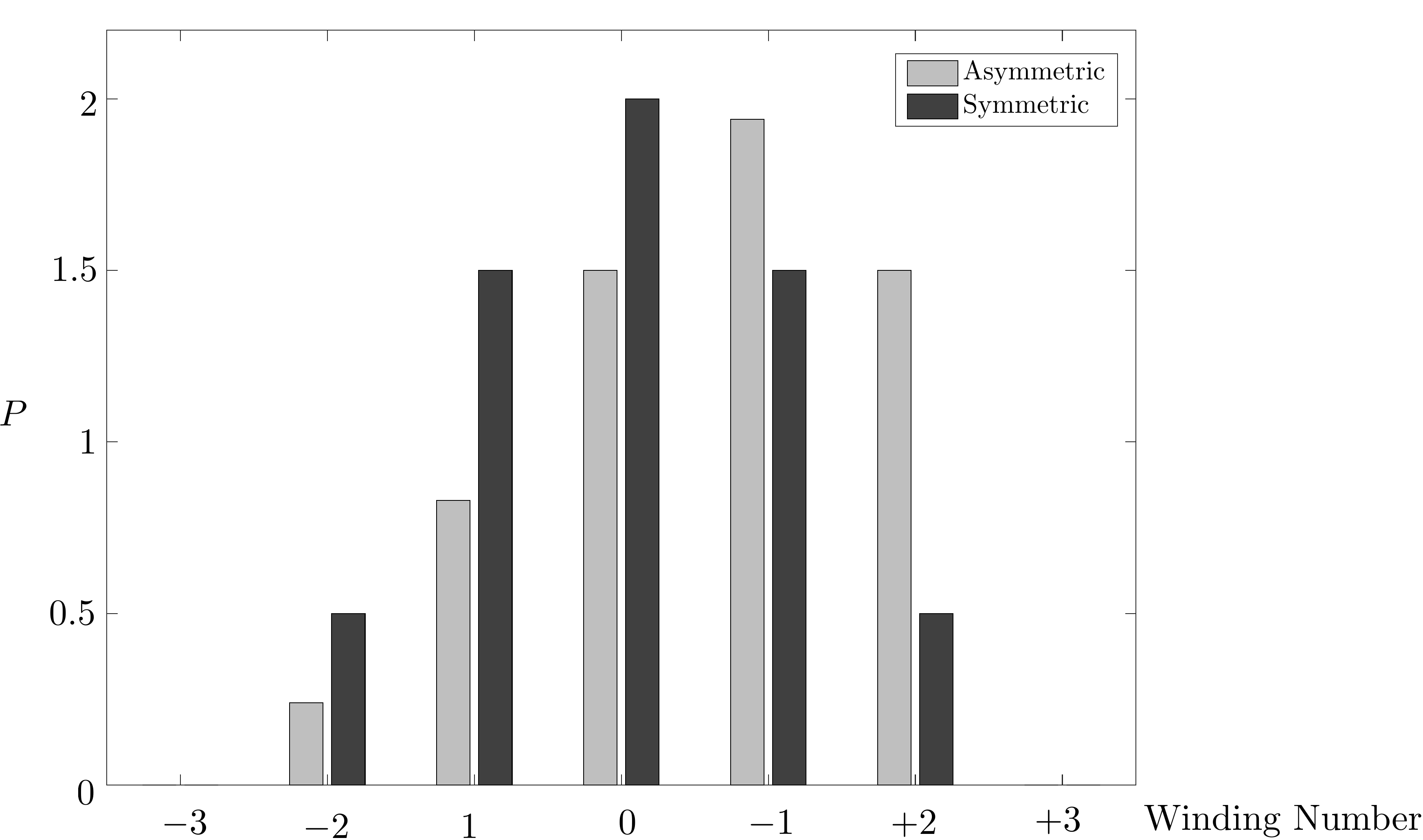}
  \caption{Solutions of active power flow equation with different winding
    numbers for symmetric and asymmetric power profiles shown in
    Figure~\ref{fig:inject}. For the symmetric power profile (left image in
    Figure~\ref{fig:inject}) the largest power transmission capacity is for winding
    number $0$. However, for the asymmetric power profile (middle image in
    Figure~\ref{fig:inject}) the largest power transmission capacity is for winding
    number~$-1$.}
	\label{fig:pc-ring-symmetry}
\end{figure}
\paragraph{Summary evaluation}
The winding number $0$ does not necessarily carry the maximum PTC of a
network. As illustrated in Figure~\ref{fig:pc-ring-symmetry}, the winding
number $-1$ carries the largest network PTC for the asymmetric supply/demand
vector $\pactive=P[\vectorzeros[2],-1,\vectorzeros[8],+1]^{\top}$. For the
asymmetric case, more power can flow along the left path when the winding
number is $-1$ than when the winding number is $0$, due to the thermal
constraint~\eqref{eq:thermal_constraints}.  As a
result, when the winding number is $-1$, the flow along the left path of
the ring is able to better relieve the capacity for the right
path.\oprocend

\subsection{Loop flows in IEEE RTS 24 testcase}

The ongoing shift form fossil-fueled power generation to renewable energy resources is leading to large changes
in the supply/demand structure of the power grids. One of the related
issues is the change to undesirable operating points, where large
powers are flowing over the network. In this part, we study the
existence of undesirable operating points for the modified IEEE RTS 24 testcase. The IEEE
RTS 24 is a portion of the larger IEEE RTS 96 testcase which is designed to
study reliability of the power
networks~\cite{CG-PW-PA-RA-MB-RB-QC-CF-SH-SK-WL-RM-DP-NR-DR-AS-MS-CS:99}. This
testcase can be described by a connected, undirected graph $G$ with
$24$ buses and $34$ branches. The nodal admittance matrix is denoted
by $Y\in\complex^{24\times 24}$. The set of nodes are
partitioned into load buses $\mathcal{V}_1$ and generator buses
$\mathcal{V}_2$. The power demand (resp. power supply) at node
$i\in\mathcal{V}_1$ (resp. $\mathcal{V}_2$) is denoted by the $i$th element
of $\pactive$. The nominal parameters for the test cases can be found
in~\cite{CG-PW-PA-RA-MB-RB-QC-CF-SH-SK-WL-RM-DP-NR-DR-AS-MS-CS:99} and is
shown in the second column of the Table~\eqref{tab:nom-vs-new-power-profile}. We first modify the
branches in IEEE RTS 24 to be lossless without shunt
admittances. Since our theoretical results assume that all nodes are $PV$ nodes, MATPOWER's solver
is used to compute the voltage magnitudes at every node~\cite{RDZ-CEMS-RJT:11}. Then the edge weights of $G$ are set to
$a_{ij}=a_{ji}=V_{i}V_{j}\operatorname{Im}(Y_{ij})>0$. A cycle basis for $G$ is $\Sigma=\{\sigma_1,\ldots,\sigma_{11}\}$ where, for every
$i\in\{1,\ldots,11\}$, the cycle $\sigma_i$ is given in Table~\ref{tab:cycle_basis}.
\begin{center}
\begin{table}[htb] \centering
\begin{tabular}{|c|c|}
\hline
\multicolumn{2}{|c|}{Cycle basis for IEEE RTS 24}\\
\hline
\rowcolor{Gray}
$\sigma_1 =(2,1,3,9,4,2)$ & $\sigma_2=(5,1,3,9,8,10,5)$\\

$\sigma_3 =(10,6,2,4,9,8,10)$ & $\sigma_4 =(11,10,8,9,11)$\\
\rowcolor{Gray}
$\sigma_5 =(12,10,8,9,12)$ & $\sigma_6 =(13,11,9,12,23,13)$\\

$\sigma_7 =(13,12,23,13)$ &  $\sigma_8 =(16,15,24,3,9,11,14,16)$\\
\rowcolor{Gray}
$\sigma_9 =(21,15,24,3,9,11,14,16,17,22,21)$ & $\sigma_{10} =(21,18,17,22,21)$\\

$\sigma_{11}=(23,20,19,16,14,11,9,12,23)$ & \\
\hline
\end{tabular}
\caption{A cycle basis $\Sigma$ for the IEEE RTS 24 testcase}\label{tab:cycle_basis} 
\end{table}\vspace{-0.5cm}
\end{center}

We also modify the nominal power supply/demand of the IEEE RTS 24
testcase. The goal is to increase the penetration of renewable energy units
and remove some of the synchronous generators. Our modification in the
power supply/demand of the IEEE RTS 24 testcase is illustrated in
Figure~\ref{fig:IEEERTS24}.  The modified supply/demand vector is denoted
by $\pactive^{\mathrm{mod}}$ and is given in the third column of
Table~\ref{tab:nom-vs-new-power-profile}.

Using the Algorithm~\ref{alg:active_solver}, we study the active
power flow equation~\eqref{eq:active-power-flow} and the thermal
constraints~\eqref{eq:thermal_constraints} with maximum power angle
$\gamma=1.5\;\; \mathrm{rad}$. First, we observe that, for the nominal
power supply/demand vector of IEEE RTS 24, there exists no solution
for this problem with a nonzero winding vector. However, for the modified supply/demand vector
$\pactive^{\mathrm{mod}}$, there exists exactly one solution associated to the winding vector $\mathbf{u}=\vect{0}_{11}$
and one solution associated to the winding vector $\mathbf{u} =
[\vectorzeros[10]^{\top},-1]$. We then examine the computational
efficiency of the projection iteration~\eqref{eq:seq_converge} for checking the
existence/finding solutions of the active power flow
equations~\eqref{eq:active-power-flow}. We assume that the cycle basis $\Sigma$
illustrated in Table~\ref{tab:cycle_basis} is given. For every winding
vector $\mathbf{u}\in \Img(\mathbf{w}_{\Sigma})$, we focus on the
$\mathbf{u}$-winding balance equations~\eqref{eq:winding-form}. Then $\subscr{t}{fsolve}$ is the
computational time for solving~\eqref{eq:winding-form} using MATLAB's fsolve and
$\subscr{t}{sequence}$ is the computational time for solving~\eqref{eq:winding-form}
using the projection iteration~\eqref{eq:seq_converge}. For
$\mathbf{u}=\vect{0}_{11}$ and $\mathbf{u} =
[\vectorzeros[10]^{\top},-1]$, the computational times $\subscr{t}{sequence}$ and $\subscr{t}{fsolve}$ are compared in Table~\eqref{tab:computational-complexity}.\footnote{The computer
  specifications for these simulations are as follows: Processor Intel Core i5 @ 1.6 GHZ CPU and 4 GB RAM.} 
\begin{center}
	\begin{table}[htb] \centering
		\begin{tabular}{|c|c|c|c|c|c|c|}
			\hline
			\multicolumn{7}{|c|}{IEEE RTS 24} \\ \hline
			Node &$\pactive^{\mathrm{nom}}\ (MW)$ &
                                                          $\pactive^{\mathrm{mod}}\
                                                              (MW)$
               & &  Node &$\pactive^{\mathrm{nom}}\ (MW)$ &
                                                          $\pactive^{\mathrm{mod}}
                                                          \ (MW)$\\ \hline
			$\begin{matrix}1 \\ 2 \\ 3 \\ 4 \\ 5 \\ 6 \\ 7 \\ 8 \\ 9 \\ 10 \\ 11 \\ 12 \end{matrix}$ 
				&$\begin{matrix} 64.00 \\ 75.00 \\ -180.00 \\ -74.00 \\ -71.00 \\ -136.00 \\ 115.00 \\ -171.00 \\ -175.00 \\ -195.00 \\ 00.00 \\ 00.00 \end{matrix}$
				&$\begin{matrix}8.40 \\ 9.27 \\
                                  -268.48 \\ -99.14 \\ -79.96 \\
                                  -68.63 \\ 63.65 \\ -142.41 \\
                                  -245.21 \\ 95.83 \\ 100.00 \\
                                  00.00 \end{matrix}$ & &
$\begin{matrix}13 \\ 14 \\ 15 \\ 16 \\ 17 \\ 18 \\ 19 \\ 20 \\ 21 \\ 22 \\ 23 \\
                  24 \end{matrix}$ & $\begin{matrix}-129.00 \\ -194.00
                  \\ -102.00 \\ 55.00 \\ 00.00 \\ 67.00 \\ -181.00
                  \\ -128.00 \\ 400.00 \\ 300.00 \\ 660.00 \\
                  00.00 \end{matrix}$ & $\begin{matrix}-193.50 \\
                  -143.39 \\ -153.00 \\ 00.00 \\ 00.00 \\ 00.00 \\
                  26.57 \\ 100.00 \\ 00.00 \\ 00.00 \\ 990.00 \\
                  00.00 \end{matrix}$ \\ \hline
		\end{tabular} 
\caption{Nominal supply/demand vector $\pactive^\mathrm{nom}$ versus modified
  supply/demand vector $\pactive^{\mathrm{mod}}$.}
		\label{tab:nom-vs-new-power-profile}
	\end{table}
\end{center}
\begin{figure}[!htb]\centering
  \includegraphics[width=\linewidth]{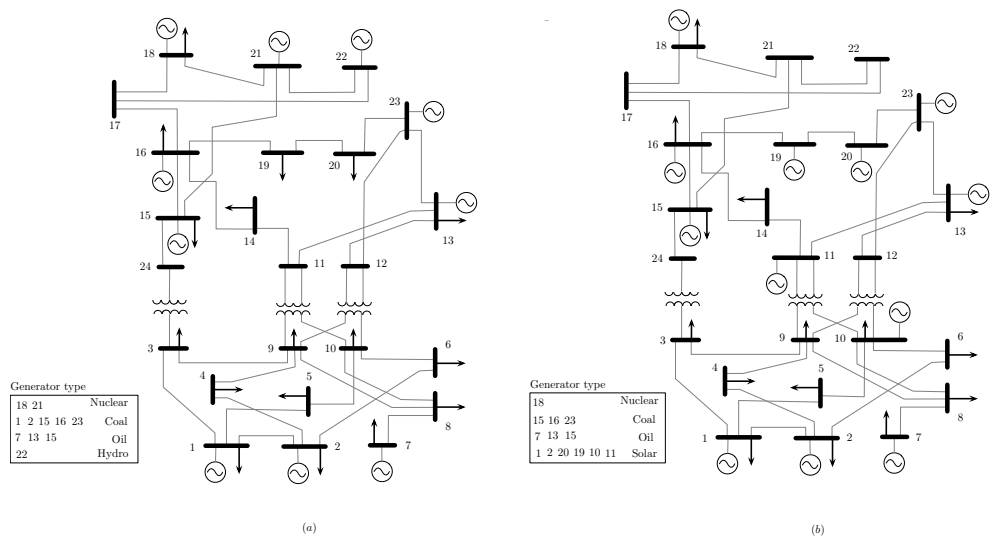}
  \caption{(a) shows the original IEEE RTS 24 testcase with generator types
    and (b) shows the modified IEEE RTS 24 testcase. The modifications
    in power supply/demand vector in Figure (b) are: i) shutting down
    the hydro generator at node 22, the thermal generators at node 1
    and 2, and the nuclear generator at node 21 ii) adding solar energy generations at
    nodes 1,2, 10, 11, 19, 20, 23.}
  \label{fig:IEEERTS24}
\end{figure}
\begin{center}
	\begin{table}[htb] 
		\centering
		\begin{tabular}{|c|c|}
			\hline
				 Winding vector, $\mathbf{u}$
                  &$\subscr{t}{sequence}/\subscr{t}{fsolve}$ \\ \hline
				\rowcolor{Gray}
				$[\vectorzeros[11]^{\top}]$ & 0.2463 \\
				$[\vectorzeros[10]^{\top},-1]$  & 0.0788\\ \hline
		\end{tabular}
\caption{Computation time for the
  projection iteration~\eqref{eq:seq_converge} for $\gamma=1.5$ is denoted by
  $\subscr{t}{sequence}$ and for MATLAB's \emph{fsolve} is denoted by $\subscr{t}{fsolve}$. The values given in the table are the ratio $\subscr{t}{sequence}/\subscr{t}{fsolve}$, averaged over $5$ trials. The computations use a tolerance of $10^{-6}$.}
		\label{tab:computational-complexity}
	\end{table}
\end{center}
\paragraph{Summary evaluation} 
We used the Algorithm~\ref{alg:active_solver} to check for the existence/find all
solutions of the active power flow
equations~\eqref{eq:active-power-flow} for the nominal and modified
IEEE RTS 24 testcase. Table~\ref{tab:computational-complexity} shows that, for the
nominal and modified IEEE RTS 24 testcase, the projection iteration~\eqref{eq:seq_converge}
not only converges to the loop flow solutions but also is faster than the
MATLAB's \textit{fsolve} for checking the existence/computing the
solutions of the active power flow equations. \oprocend

\myclearpage
\section{Conclusion}
In this paper, we have introduced two classes of network problems on the
$n$-torus---flow networks and elastic networks---and we developed a
rigorous framework to study the multiple solutions of these problems.  We
extended Kirchoff's voltage law to networks with phase-valued (instead of
real-valued) nodal variables, and we showed how this law induces a
partition of the $n$-torus into winding cells.  We demonstrated that these
winding cells localize solutions of the flow and elastic network problems,
since each cell contains at most one solution.  In order to compute the
solution in each winding cell (or determine that no solution exists), we
proposed the projection iteration, a novel contraction mapping.  Finally,
we presented several numerical experiments, which investigate the notion of
flow capacity and flow congestion in different test cases, and we verified
the accuracy and efficiency of our methods.

Much work remains on the connection of solutions of flow and elastic
networks with other phenomena in network systems.  For power systems
specifically, we have already exploited the winding partition to derive
sufficient conditions for transient stability in AC grids
\cite{KDS-SJ-FB:18f}.  But this work assumes lossless AC grids with
constant voltage magnitudes, and it is important to investigate power flows
in more realistic scenarios.  See~\cite{JWSP:17a,JCB-TC-LD:18} for recent
work in this direction.  It would also be interesting to compare the
performance of our Flow Network Solver~\ref{alg:active_solver} to
state-of-the-art power flow solvers in the literature.  Other numerical
directions include applying our flow network solver to study collective
motion in engineering networks and the performance of associative memory
networks.  In a more theoretical direction, it would be valuable to apply
the framework and analysis of this paper to study more general coupled
oscillator networks, such as FitzHugh\textendash{}Nagumo and
Hodgkin\textendash{}Huxley
models~\cite{MD-JG-BK-CK-HO-MW:12,GSM-NC:01,AF-GD-RS:14}.  In particular,
we envision that the winding partition may be useful in developing analytic
conditions for synchronization in coupled oscillator networks, which is one
of the central problems in this area of
research~\cite{REM-SHS:05,NC-MWS:08,FD-MC-FB:11v-pnas}.


\myclearpage
\bibliographystyle{plainurl}
\bibliography{alias,Main,FB}

\myclearpage
\appendix

\section{Two useful lemmas}\label{app:proofs+lemmas}

{\color{black} }

\begin{lemma}[Properties of the cycle-edge matrix]\label{thm:shift_property}
  Given a cyclic connected undirected graph $G$ with cycle basis
  $\Sigma=\{\sigma_1,\ldots,\sigma_{m-n+1}\}$, the following statements
  hold:
\begin{enumerate}
\item\label{p1:rank} the matrix ${C}_{\Sigma}$ is of rank
  $m-n+1$, 
\item\label{p2:ker} $\Ker({C}_{\Sigma}) =\Img(B^{\top})$,
\item\label{p3:integer-basis} the system of
  equations $C_\Sigma \mathbf{z} = \mathbf{v}$, for
  $\mathbf{v}\in \Img(\mathbf{w}_{\Sigma})$, has an integer solution.
\end{enumerate}
\end{lemma}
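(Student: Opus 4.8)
The plan is to treat parts~\ref{p1:rank} and~\ref{p2:ker} as direct consequences of the definitions and to concentrate the real work on the integrality statement in part~\ref{p3:integer-basis}. For part~\ref{p1:rank}, I would note that the rows of ${C}_{\Sigma}$ are the signed cycle vectors $v_{\sigma_1}^{\top},\ldots,v_{\sigma_{m-n+1}}^{\top}$, which by definition of a cycle basis form a basis of the cycle space $\Ker(B)$ and are thus linearly independent; hence $\operatorname{rank}({C}_{\Sigma})=m-n+1$. For part~\ref{p2:ker}, I would use that the kernel of a matrix is the orthogonal complement of its row space: since the row space of ${C}_{\Sigma}$ is $\operatorname{span}\{v_{\sigma_1},\ldots,v_{\sigma_{m-n+1}}\}=\Ker(B)$, we obtain $\Ker({C}_{\Sigma})=\Ker(B)^{\perp}=\Img(B^{\top})$, the last equality being the standard identity from the fundamental theorem of linear algebra. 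The dimensions agree, since $m-(m-n+1)=n-1=\operatorname{rank}(B)$ for the connected graph $G$.

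The content of the lemma is part~\ref{p3:integer-basis}. The first step is to record the identity $\mathbf{w}_{\Sigma}(\theta)=\tfrac{1}{2\pi}\,{C}_{\Sigma}(B^{\top}\theta)$ for $\theta\in\torus_0^n$, which follows from Definition~\ref{def:wind} once one observes that $w_{\sigma_i}(\theta)=\tfrac{1}{2\pi}\,v_{\sigma_i}^{\top}(B^{\top}\theta)$. Now fix $\mathbf{v}\in\Img(\mathbf{w}_{\Sigma})$ and write $\mathbf{v}=\mathbf{w}_{\Sigma}(\theta)$. The idea is to reduce to a fundamental cycle basis, where the arithmetic is transparent. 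Choose a spanning tree of $G$ and let $\Sigma_0$ be the associated fundamental cycle basis; reordering edges so the $m-n+1$ non-tree edges come first, the cycle-edge matrix has block form ${C}_{\Sigma_0}=[\,D\mid M\,]$ with $D$ a diagonal matrix of $\pm1$ entries, because each fundamental cycle contains exactly one non-tree edge. Since winding numbers are integers by Theorem~\ref{thm:wind_prop}\ref{p1:wind_integer_bound}, the vector
\begin{align*}
  \mathbf{z}_0 = \begin{bmatrix} D^{-1}\mathbf{w}_{\Sigma_0}(\theta) \\ \vect{0}_{n-1}\end{bmatrix} \in \integer^m
\end{align*}
is an integer solution of ${C}_{\Sigma_0}\mathbf{z}_0 = \mathbf{w}_{\Sigma_0}(\theta)$ (note $D^{-1}=D$).

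Finally, I would transfer this solution to the arbitrary basis $\Sigma$. Since the rows of ${C}_{\Sigma}$ and of ${C}_{\Sigma_0}$ are two bases of the same space $\Ker(B)$, there is a unique invertible $T\in\real^{(m-n+1)\times(m-n+1)}$ with ${C}_{\Sigma}=T{C}_{\Sigma_0}$; the identity above then yields $\mathbf{w}_{\Sigma}(\theta)=T\,\mathbf{w}_{\Sigma_0}(\theta)$ with the \emph{same} $T$. Hence ${C}_{\Sigma}\mathbf{z}_0 = T{C}_{\Sigma_0}\mathbf{z}_0 = T\,\mathbf{w}_{\Sigma_0}(\theta) = \mathbf{w}_{\Sigma}(\theta) = \mathbf{v}$, so the integer vector $\mathbf{z}_0$ already solves ${C}_{\Sigma}\mathbf{z}=\mathbf{v}$. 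I expect this transfer step to be the main obstacle: one must avoid trying to prove the stronger claim that ${C}_{\Sigma}$ maps $\integer^m$ onto all of $\integer^{m-n+1}$, which is false for non-integral cycle bases (where $T^{-1}$ fails to be an integer matrix). The resolution is that the target $\mathbf{v}$ is itself the $T$-image of an integer vector, so it automatically lands in the sublattice $T\integer^{m-n+1}$ reachable by ${C}_{\Sigma}$, rendering the non-integrality of $T^{-1}$ harmless.
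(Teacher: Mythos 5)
Your proposal is correct, and for part~\ref{p3:integer-basis} it takes a genuinely different route from the paper. Parts~\ref{p1:rank} and~\ref{p2:ker} are essentially the paper's argument (you use the orthogonal-complement description of the kernel where the paper checks the inclusion $\Img(B^{\top})\subseteq\Ker(C_\Sigma)$ by hand and finishes with rank--nullity; same content). For part~\ref{p3:integer-basis}, the paper passes to an \emph{integral} cycle basis $\Sigma'$ from the cycle-basis literature, uses that the square submatrix of $C_{\Sigma'}$ on non-tree edges has determinant $\pm1$, and then runs a Smith-normal-form argument to get an integer solution of $C_{\Sigma'}\mathbf{z}=T\mathbf{u}$ before pulling it back through the change of basis. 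You instead use a \emph{fundamental} (spanning-tree) cycle basis $\Sigma_0$, whose cycle-edge matrix visibly has block form $[\,D\mid M\,]$ with $D=D^{-1}$ diagonal $\pm1$, so an integer solution of the $\Sigma_0$-system can be written down by inspection once one knows winding numbers are integers; the decisive extra ingredient is the identity $\mathbf{w}_{\Sigma}(\theta)=\tfrac{1}{2\pi}C_{\Sigma}(B^{\top}\theta)$, which guarantees that the unique invertible $T$ with $C_{\Sigma}=TC_{\Sigma_0}$ also satisfies $\mathbf{w}_{\Sigma}(\theta)=T\,\mathbf{w}_{\Sigma_0}(\theta)$, so the integer vector $\mathbf{z}_0$ solves the $\Sigma$-system for the given right-hand side without any integrality of $T$ or $T^{-1}$. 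What your approach buys is a self-contained, more elementary proof that avoids the Smith normal form and the external results on integral cycle bases (and sidesteps the paper's implicit claim that the change-of-basis matrix to an integral basis is itself integer); what it gives up is nothing relevant here, since the lemma only asserts solvability for $\mathbf{v}\in\Img(\mathbf{w}_{\Sigma})$, which is exactly the case your argument covers.
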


\begin{proof}
Regarding part~\ref{p1:rank}, note that rank of the matrix $C_\Sigma$ is equal
to the number of linearly independent rows in $C_{\Sigma}$. By definition, the set
$\{\sigma_1,\ldots,\sigma_{m-n+1}\}$ consists of linearly independent
vectors in $\real^m$ and thus rank of $C_{\Sigma}$ is equal to $m-n+1$.

Regarding part~\ref{p2:ker}, we first show that
$\Img(B^{\top})\subseteq \Ker(C_{\Sigma})$. Suppose that
$\mathbf{y}\in \Img(B^{\top})$. Then there exists $\mathbf{x}\in
\real^n$ such that $\mathbf{y} = B^{\top}\mathbf{x}$. Thus, for every
$i\in \{1,\ldots,m-n+1\}$, we get 
\begin{align*}
v_{\sigma_i}^{\top} \mathbf{y} = v_{\sigma_i}^{\top} B^{\top}\mathbf{x} =
  (B v_{\sigma_i})^{\top}\mathbf{x}= 0,
\end{align*}
where the last equality holds since $v_{\sigma}\in
\Ker(B)$, for every cycle $\sigma$. This means that
$C_{\Sigma}\mathbf{y}=0$ or equivalently $\mathbf{y}\in
\Ker(C_{\Sigma})$. Therefore, $\Img(B^{\top})\subseteq
\Ker(C_{\Sigma})$. In turn, using the result in part~\ref{p1:rank},
rank of the matrix $C_{\Sigma}$ is $m-n+1$. Thus, by the rank-nullity theorem, we have 
\begin{align*}
\mathrm{dim}(\Ker(C_{\Sigma})) = m - (m-n+1) = n-1 = \mathrm{dim}(\Img(B^{\top})).
\end{align*}
As a result, we have $\Img(B^{\top}) = \Ker(C_{\Sigma})$.

Regarding part~\ref{p3:integer-basis}, suppose that $\Sigma'$
is an integral cycle basis for $G$ (for a definition and existence of an integral cycle
basis see~\cite{TK-CL-KM-DM-RR-TU-KZ:09}). Since $\Sigma$ is a cycle
basis for $G$, there exists an invertible integer matrix $T \in
\real^{(m-n+1)\times (m-n+1)}$ such that 
\begin{align*}
v_{\sigma'_i} = \sum_{j=1}^{n} t_{ji} v_{\sigma_j}. 
\end{align*}
This implies that $C_{\Sigma'} = T C_{\Sigma}$. We first show that the
system of linear equations $C_{\Sigma'}\mathbf{z} = T\mathbf{u}$
has an integer solution. Note that rank of the matrix
$C_{\Sigma'}$ is $m-n+1$. Let $\mathcal{T}$ be a spanning tree in $G$ and without loss of generality,
assume that the first $m-n+1$ column of $C_{\Sigma'}$ are
associated with the edges that are not in the spanning tree $\mathcal{T}$. Then,
we define the matrix $\widehat{C}_{\Sigma'} \in \real^{(m-n+1)\times
  (m-n+1)}$ as follows:
\begin{align*}
\widehat{C}_{\Sigma'} = \begin{bmatrix}\left(C_{\Sigma'}\right)_{1} &
  \ldots & \left(C_{\Sigma'}\right)_{m-n+1}\end{bmatrix}.
\end{align*}
where $\left(C_{\Sigma'}\right)_{i}$ the $i$th column of the matrix
$C_{\Sigma'}$. Since $\Sigma'$ is an integral base, by~\cite[Theorem
3.4]{TK-CL-KM-DM-RR-TU-KZ:09}, we have $|\det(\widehat{C}_{\Sigma'})| =
1$. Since the matrix $\widehat{C}_{\Sigma'}$ has integer entries,
by~\cite[Theorem 1]{FL:96}, there exists matrices $U,V\in \integer^{(m-n+1)\times (m-n+1)}$ with $|\det(U)|
= |\det(V)| =1$ such that
\begin{align*}
  U\widehat{C}_{\Sigma'}V = B = \diag(b_1,\ldots,b_{m-n+1}),
\end{align*}
where $b_i$ are positive integers and $b_i | b_{i+1}$, for every
$i\in\{1,\ldots,m-n\}$. Note that $|\det(\widehat{C}_{\Sigma'})| = 1$
implies that $|\det(B)| = 1$ which in turn implies that $|b_i| =1$, for every $i\in
\{1,\ldots,m-n+1\}$. Therefore, by~\cite[Proposition 2]{FL:96}, for every $\mathbf{v}\in
\integer^{(m-n+1)}$, there exists an integer solution $\mathbf{x}\in
\integer^{(m-n+1)}$ such that $\widehat{C}_{\Sigma'}(\mathbf{x}) = T\mathbf{u}$. 
Then it is straightforward to see that the integer vector
$\mathbf{z}\in \integer^{m}$ defined by $\mathbf{z} = \begin{bmatrix} \mathbf{x}\\ \vect{0}_{n-1}\end{bmatrix}$
is a solution for the system of equations $C_{\Sigma'}\mathbf{z} =
T\mathbf{u}$. This means that $TC_{\Sigma}\mathbf{z} =
T\mathbf{u}$ and since $T$ is invertible, we have
$C_{\Sigma}\mathbf{z} = \mathbf{u}$. This completes the proof of Lemma~\ref{thm:shift_property}. 
\end{proof}

{\color{black}
  \begin{lemma}[Properties of $D$-weighted cycle
    projection]\label{thm:cycle-projection}
  Let $G$ be an undirected weighted connected graph with incidence matrix $B$
  and diagonal weight matrix $\mathcal{A}\in \real^{m\times m}$. Let
  $D\in \real^{m\times m}$ be a positive definite diagonal matrix, and
  $\mathcal{P}_{D}$ be the $D$-weighted cycle projection defined
  in~\eqref{eq:cycle-projection}. Then the following statements hold:
  \begin{enumerate}
  \item\label{p1:cyc-eigen} $\mathcal{P}_{D}$ is idempotent; 
  \item\label{p2:cyc-eigen} the eigenvalues of $\mathcal{P}_{D}$ are $0$
    and $1$ with algebraic (geometric) multiplicity $n-1$ and $m-n+1$,
    respectively;
    \item\label{p3:ImBT} $\Ker(\mathcal{P}_{D}D\mathcal{A}) = \Img(B^{\top})$. 
    \end{enumerate}
  \end{lemma}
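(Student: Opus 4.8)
The plan is to treat $M := B D \mcA \Bt$ as a weighted graph Laplacian and exploit the standard pseudoinverse identities together with the connectivity of $G$. Since $D\mcA$ is a positive-definite diagonal matrix, $M$ is symmetric positive-semidefinite with $\Ker(M) = \spn(\vect{1}_n)$ and $\mathrm{rank}(M) = n-1$, because $G$ is connected. I will repeatedly use that $M^\dagger M = M M^\dagger$ is the orthogonal projection onto $\Img(M)$, that $I_n - M^\dagger M$ is the orthogonal projection onto $\Ker(M)$, and the defining relations $M M^\dagger M = M$ and $M^\dagger M M^\dagger = M^\dagger$. I will also use the elementary identity $\Bt \vect{1}_n = \vect{0}_m$.

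For part~\ref{p1:cyc-eigen}, writing $\mathcal{P}_D = I_m - N$ with $N := D\mcA \Bt M^\dagger B$, idempotency reduces to checking $N^2 = N$. Expanding and grouping the middle factor as $B D\mcA \Bt = M$, one finds $N^2 = D\mcA \Bt M^\dagger (B D\mcA \Bt) M^\dagger B = D\mcA \Bt M^\dagger M M^\dagger B$, and the identity $M^\dagger M M^\dagger = M^\dagger$ collapses this to $N$, whence $\mathcal{P}_D^2 = I_m - 2N + N = \mathcal{P}_D$. For part~\ref{p2:cyc-eigen}, idempotency forces every eigenvalue into $\{0,1\}$ and makes $\mathcal{P}_D$ diagonalizable, so algebraic and geometric multiplicities coincide, and the multiplicity of the eigenvalue $1$ equals $\mathrm{rank}(\mathcal{P}_D) = \mathrm{tr}(\mathcal{P}_D)$. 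Using the cyclic property of the trace, $\mathrm{tr}(N) = \mathrm{tr}(M^\dagger B D\mcA \Bt) = \mathrm{tr}(M^\dagger M) = \mathrm{rank}(M) = n-1$, so $\mathrm{tr}(\mathcal{P}_D) = m - (n-1) = m-n+1$; the eigenvalue $0$ then has multiplicity $m - (m-n+1) = n-1$.

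For part~\ref{p3:ImBT}, I will prove one inclusion directly and close the argument by matching dimensions. For $\Img(\Bt) \subseteq \Ker(\mathcal{P}_D D\mcA)$, take $v = \Bt x$ and compute $\mathcal{P}_D D\mcA \Bt x = D\mcA \Bt (I_n - M^\dagger M) x$; since $(I_n - M^\dagger M)x \in \Ker(M) = \spn(\vect{1}_n)$ and $\Bt \vect{1}_n = \vect{0}_m$, this vanishes. For the reverse inclusion I avoid a direct computation: because $D\mcA$ is invertible, $\Ker(\mathcal{P}_D D\mcA) = (D\mcA)^{-1}\Ker(\mathcal{P}_D)$, and part~\ref{p2:cyc-eigen} gives $\dim \Ker(\mathcal{P}_D) = n-1$, so $\dim \Ker(\mathcal{P}_D D\mcA) = n-1 = \mathrm{rank}(\Bt) = \dim \Img(\Bt)$. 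A subspace contained in another of the same finite dimension equals it, yielding $\Ker(\mathcal{P}_D D\mcA) = \Img(\Bt)$.

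The computations are routine; the only points requiring care are the bookkeeping of pseudoinverse identities (applying $M^\dagger M M^\dagger = M^\dagger$ to the correct factor) and, in part~\ref{p3:ImBT}, recognizing $I_n - M^\dagger M$ as the orthogonal projector onto $\Ker(M) = \spn(\vect{1}_n)$, which is precisely where connectivity of $G$ enters decisively. Because the dimension-counting shortcut in part~\ref{p3:ImBT} relies on part~\ref{p2:cyc-eigen}, the three parts should be established in the order stated.
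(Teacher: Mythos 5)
Your proof is correct, and it is more self-contained than the paper's. For parts~(i) and~(ii) the paper simply invokes an external result (\cite[Theorem~4]{SJ-FB:16h}, after observing that $BD\mathcal{A}B^{\top}$ is the Laplacian of an auxiliary connected graph $G'$ with weight matrix $D\mathcal{A}$), whereas you establish idempotency by a direct pseudoinverse computation ($N^2=N$ via $M^{\dagger}MM^{\dagger}=M^{\dagger}$) and obtain the multiplicities from $\mathrm{rank}(\mathcal{P}_D)=\mathrm{tr}(\mathcal{P}_D)=m-\mathrm{tr}(M^{\dagger}M)=m-n+1$; both steps are sound, and the trace argument is a clean way to avoid citing spectral facts about oblique projections. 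For part~(iii) the forward inclusion is essentially identical in spirit: the paper uses the identity $(BD\mathcal{A}B^{\top})^{\dagger}BD\mathcal{A}B^{\top}=I_n-\tfrac{1}{n}\vect{1}_n\vect{1}_n^{\top}$ together with $B^{\top}\vect{1}_n=\vect{0}_m$, while you use the equivalent observation that $I_n-M^{\dagger}M$ projects onto $\Ker(M)=\spn(\vect{1}_n)$ (connectivity entering in exactly the same place). For the reverse inclusion you diverge: the paper argues directly that $\mathcal{P}_D D\mathcal{A}\alpha=\vect{0}_m$ and invertibility of $D\mathcal{A}$ force $\alpha=B^{\top}(BD\mathcal{A}B^{\top})^{\dagger}BD\mathcal{A}\alpha\in\Img(B^{\top})$, whereas you count dimensions using part~(ii). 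Your shortcut is valid but makes part~(iii) logically dependent on part~(ii), which you correctly flag; the paper's version of~(iii) stands on its own. Both approaches are rigorous; yours has the advantage of not relying on the cited reference, at the cost of slightly more bookkeeping.
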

  \begin{proof}
    Regarding part~\ref{p1:cyc-eigen}, define the graph $G'$ with the same
    node set, the same edge set as $G$, and with the diagonal weight matrix
    $D\mathcal{A}$. Clearly, $G$ is connected if and only if $G'$ is
    connected. Moreover, by~\cite[Theorem 4]{SJ-FB:16h}, $\mathcal{P}_{D}$
    is an oblique projection onto $\Ker(BD\mathcal{A})$ parallel to
    $\Img(B^{\top})$ and, therefore, is idempotent. Regarding
    part~\ref{p2:cyc-eigen}, the result follows from~\cite[Theorem
      4]{SJ-FB:16h}.  Regarding part~\ref{p3:ImBT}, suppose that $\alpha\in
    \Img(B^{\top})$. This means that there exists $\xi\in \real^{n}$ such
    that $B^{\top}\xi = \alpha$. As a result,
    \begin{align*}
      \mathcal{P}_{D} D\mathcal{A}\alpha & = D\mathcal{A}\alpha -
      D\mathcal{A} B^{\top} (B D\mathcal{A} B^{\top})^{\dagger}B
      D\mathcal{A} \alpha =  D\mathcal{A}\alpha - D\mathcal{A} B^{\top} (B D\mathcal{A} B^{\top})^{\dagger}B
      D\mathcal{A} B^{\top}\xi \\ & = D\mathcal{A}\alpha - D\mathcal{A}
      B^{\top} (I_n-\tfrac{1}{n}\vect{1}_n\vect{1}^{\top}_n)\xi,
    \end{align*}
    where, for the last equality, we used the equality $(B D\mathcal{A}
    B^{\top})^{\dagger}B D\mathcal{A} B^{\top} =
    I_n-\tfrac{1}{n}\vect{1}_n\vect{1}^{\top}_n$. This equality holds by
    the identity in \cite[Lemma~6.12(iii)]{FB:20} and the fact that the
    graph $G'$ is connected with the Laplacian matrix $B D \mathcal{A}
    B^{\top}$~\cite[Lemma~9.1]{FB:20}. Note that $B^{\top}\vect{1}_n =
    \vect{0}_m$~\cite[Section 9.1]{FB:20}. Thus, we get
      \begin{align*}
      \mathcal{P}_{D} D\mathcal{A}\alpha = D\mathcal{A}\alpha - D\mathcal{A}
      B^{\top} (I_n-\tfrac{1}{n}\vect{1}_n\vect{1}^{\top}_n)\xi =  D\mathcal{A}\alpha - D\mathcal{A}
      B^{\top}\xi = D\mathcal{A}\alpha - D\mathcal{A}\alpha = \vect{0}_m.
    \end{align*}
    Therefore, $\alpha \in \Ker(\mathcal{P}_{D}D\mathcal{A})$ and, in turn,
    $\Img(B^{\top})\subseteq \Ker(\mathcal{P}_{D}D\mathcal{A})$. Now
    suppose that $\alpha\in \Ker(\mathcal{P}_{D}D\mathcal{A})$. Therefore,
    \begin{align*}
      \vect{0}_m = \mathcal{P}_{D} D\mathcal{A}\alpha = D\mathcal{A}\alpha -
      D\mathcal{A} B^{\top} (B D\mathcal{A} B^{\top})^{\dagger}B
      D\mathcal{A} \alpha. 
    \end{align*}
    Note that $\mathcal{A}$ and $D$ are invertible. Thus, $\alpha =
    B^{\top} (B D\mathcal{A} B^{\top})^{\dagger}B D\mathcal{A}
    \alpha$. This implies that $\alpha \in \Img(B^{\top})$ and therefore
    $\Ker(\mathcal{P}_{D}D\mathcal{A})\subseteq \Img(B^{\top})$. This
    completes the proof of part~\ref{p3:ImBT}.
    \end{proof}}

\section{Basis-independent winding map}\label{app:basis-indep}

One can define a basis-independent winding map on $\torus^n_0$. Let $\Ker'(B)$ be the dual
  space of the cycle space $\Ker(B)$ and let
  $\Sigma'=\{v'_{\sigma_1},\ldots,v'_{\sigma_{m-n+1}}\}$ be the dual basis on
  $\Ker'(B)$, associated to the basis
  $\Sigma=\{v_{\sigma_1},\ldots,v_{\sigma_{m-n+1}}\}$ on $\Ker(B)$, that is,
  for every $i,j\in \{1.\ldots,m-n+1\}$, we have $v'_{\sigma_i}(v_{\sigma_j})=\delta_{ij}$.  Define the
  winding map $\map{\mathbf{w}}{\torus_0^n}{\Ker'(B)}$ by:
  \begin{align}\label{eq:wind_coor_ind}
    \mathbf{w}(\theta) = \tfrac{1}{2\pi}\sum_{i=1}^{m-n+1}
    (v_{\sigma_i}^{\top}(B^{\top}\theta)) v'_{\sigma_i}, 
  \end{align}
  {\color{black}where the vector $(B^{\top}\theta)$ is defined in
    equation~\eqref{def:BTtheta}.} The winding map $\mathbf{w}_\Sigma$ in
   Definition~\ref{def:wind}\ref{def:wind_covector} is the representation of the map
   $\mathbf{w}$ in equation~\eqref{eq:wind_coor_ind} in the basis
   $\Sigma'=\{v'_{\sigma_1},\ldots,v'_{\sigma_{m-n+1}}\}$ for
   $\Ker'(B)$.

\section{Proofs of results in Section~\ref{sec:problems}}
   
\subsection{Proof of Theorem~\ref{thm:equivalence}}

\begin{proof}
Before we proceed with the proof, we state a useful observation. Consider
the potential energy function
\begin{align*}
  \mcH(\theta) = \sum_{(i,j)\in \mathcal{E}} a_{ij}H_e(\theta_i-\theta_j)
  = \sum_{(i,j)\in \mathcal{E}} a_{ij}H_e(\subscr{d}{cc}(\theta_i,\theta_j))
\end{align*}
We fix $e=(i,j)\in\mathcal{E}$ and we show that $H_e$ is differentiable at
$\theta$. Note that $H_e$ is twice differentiable. Since, $\theta\in
\torus^n$ satisfies the angle constraint~\eqref{eq:f-constraints}, the
counterclockwise difference $\subscr{d}{cc}$ is a differentiable function
at $\theta\in\torus^n$. This implies that $H_e$ is differentiable at
$\theta$. Let $\Sigma$ be a cycle basis for graph $G$. Since $\theta\in \torus^n$ satisfies the angle
constraint~\eqref{eq:f-constraints}, by Theorem~\ref{thm:partition}\ref{p2:union}, there exists
$\mathbf{u}\in \Img(\mathbf{w}_{\Sigma})$ such that
$\theta\in\Omega^G_{\mathbf{u}}$. Now, using Theorem~\ref{thm:linerization+correspondence}, there
exists a {\color{black} bijection} between $[\theta]\in
\Omega^G_{\mathbf{u}}/\torus^1$ and $\mathbf{x}\in P_{\mathbf{u}}$ such that {\color{black}
$(B^{\top}\theta) = B^{\top}\mathbf{x} + 2\pi
C^{\dagger}_{\Sigma}\mathbf{u}$, where the vector $(B^{\top}\theta)$ is defined by 
  equation~\eqref{def:BTtheta}}. This implies
that, for $e=(i,j)\in\mathcal{E}$, 
\begin{align}\label{eq:semiimportant}
   \frac{\partial}{\partial\theta_i}\subscr{d}{cc}(\theta_i,\theta_j)
  =  \frac{\partial}{\partial\theta_i}(B^{\top}\theta)_{e} =
  \frac{\partial}{\partial x_i}(B^{\top}\mathbf{x} + 2\pi
  C^{\dagger}_{\Sigma}\mathbf{u})_{e}  = +1.
  \end{align}

Therefore, for every $i\in \{1,\ldots,n\}$,
\begin{align}\label{eq:important}
  \frac{\partial \mcH}{\partial \theta_i} =\sum_{(k,j)\in\mathcal{E}} a_{kj}\frac{\partial}{\partial
  \theta_i}H_e(\subscr{d}{cc}(\theta_k,\theta_j)) &= \sum_{(k,j)\in\mathcal{E}} a_{kj}
  \frac{d
    H_e}{d\alpha}\left(\subscr{d}{cc}(\theta_k,\theta_j)\right)\frac{\partial}{\partial\theta_i}\subscr{d}{cc}(\theta_k,\theta_j)
  \nonumber\\ &=
  \sum_{j=1}^{n}
  a_{ij}
  h_e(\subscr{d}{cc}(\theta_i,\theta_j))
  = \sum_{j=1}^{n}
  a_{ij}
  h_e(\theta_i-\theta_j).
\end{align}
where for the last equality, we used the fact that
$\frac{dH_e}{d\alpha}(\alpha) = h_e(\alpha)$, for every $e\in
\mathcal{E}$. Now we go back to the proof of the theorem.

Regarding~\ref{p1:flownet} $\implies$ \ref{p2:elasticnet}, if $(f,\theta)$
is a solution to the flow network problem~\eqref{eq:elastic_form}, then
using~\eqref{eq:f-KCL}, for every $i\in \{1,\ldots,n\}$,
\begin{align*}
  (\pactive)_i =\sum_{j=1}^{n}f_{(i,j)}= \sum_{j=1}^{n} a_{ij}h_e(\theta_i-\theta_j).
\end{align*}
Using the equality~\eqref{eq:important}, we obtain $\pactive =
\nabla_{\theta} \mcH(\theta)$. The constraint~\eqref{eq:force-constraints}
is the same as the constraint~\eqref{eq:f-constraints}. Therefore, $\theta$
is a solution for the elastic network problem~\eqref{eq:elastic_form}.

Regarding~\ref{p2:elasticnet} $\implies$ \ref{p1:flownet}, suppose
$\theta$ is a solution for the elastic network problem~\eqref{eq:flow_form}. For
every $e=(i,j)\in\mathcal{E}$, define $f_e$ by
\begin{align*}
  f_e =a_{ij} h_e(\theta_i-\theta_j).
\end{align*}
Then note that, for every $i\in \{1,\ldots,n\}$,
\begin{align*}
  (Bf)_i = \sum_{j=1}^{n}f_{(i,j)} = \sum_{j=1}^n a_{ij}
  h_e(\theta_i-\theta_j) = \frac{\partial \mcH}{\partial
  \theta_i}(\theta) = (\nabla_{\theta} \mcH(\theta))_i=(\pactive)_i,
\end{align*}
where the second last equality is because of equation~\eqref{eq:important}. The
constraint~\eqref{eq:f-constraints} is the same as the constraint~\eqref{eq:force-constraints}. Therefore,
$(f,\theta)$ is a solution for the flow network problem~\eqref{eq:flow_form}. 
\end{proof}

\subsection{Proof of Theorem~\ref{thm:acyclic}}

\begin{proof}
First note that since $G$ is connected and acyclic, we have $\Ker(B^{\top})=\{\vect{0}_{n-1}\}$. This implies that
$f=\mathcal{A}B^{\top}L^{\dagger}\pactive$ is the unique solution of
the flow balance equation~\eqref{eq:f-KCL}. 

Regarding \ref{p1:acyclic} $\Rightarrow$ \ref{p2:acyclic}, note that,
for every $e\in \mathcal{E}$, $h_e$ is an odd monotone function and
{\color{black}$\left|(B^{\top}L^{\dagger}\pactive)_e\right|\le
  |h_e(\gamma)|$}. This implies that there exists a unique $v\in \real^{n-1}$ with
$\|v\|_{\infty}\le \gamma$ such that
$h_e(v_e)=(B^{\top}L^{\dagger}\pactive)_e$, for every $e\in
\mathcal{E}$. Note that the graph $G$ is connected and
acyclic. We start from an arbitrary node $i$ in $G$ and
assign an arbitrary phase angle $\theta_i\in \torus^1$ to
the node $i$. Then, for every node $j$ that is a neighbor of $i$, we assign
$\theta_j\in \torus^1$ such that $\theta_i-\theta_j = v_e$, where
$e=(i,j)\in \mathcal{E}$. Now, we can repeat this process for every
neighbor of the node $i$ and continue to assign the phase angles to
the nodes of $G$. Since $G$ is connected, using this procedure, one would eventually assign
a phase angle to every node of the graph. Additionally, since $G$ is
acyclic, every node will get a unique phase angle. This implies that there exists a unique $\theta\in \torus^n$, modulo rotations, such that
\begin{align*}
v_e = \theta_i-\theta_j,\qquad\mbox{ for all } e=(i,j)\in \mathcal{E}.
\end{align*}
Thus, $(f,\theta)$ is a solution for the flow network
problem~\eqref{eq:flow_form}.

Regarding \ref{p2:acyclic} $\Rightarrow$ \ref{p1:acyclic}, if $(f,\theta)$ is a solution of the flow network
problem~\eqref{eq:flow_form}, then we have $f_e = a_{ij}h_e(\theta_i-\theta_j)$,
for every $e=(i,j)\in \mathcal{E}$ and therefore, {\color{black}
\begin{align*}
  \left|(B^{\top}L^{\dagger}\pactive)_e\right| =
  \left|h_e(\theta_i-\theta_j)\right|\le |h_e(\gamma)|,\qquad\mbox{ for all }
  e=(i,j)\in \mathcal{E},
\end{align*}}
where for the last inequality we used the fact that
$|\theta_i-\theta_j|\le \gamma$.
\end{proof}

\section{Proofs of results in Section~\ref{sec:winding-partition}}
It is worth mentioning that the proofs in this section are completely
independent of the results in Section~\ref{thm:equivalence}. Therefore, we
use some of these results for proving Theorem~\ref{thm:equivalence}.

\subsection{Proof of Theorem~\ref{thm:wind_prop}}

\begin{proof}
  {\color{black}
  We start by noting that, if $\sigma$ is a cycle in
  graph $G$ and $\theta\in \torus^n_{0}$, then  $w_{\sigma}(\theta) =
  v^{\top}_{\sigma}(B^{\top}\theta)$, where the vector $(B^{\top}\theta)$ is defined in
    equation~\eqref{def:BTtheta}.} Regarding
  part~\ref{p1:wind_integer_bound}, we use induction to show that
  $w_{\sigma}(\theta)$ is an integer. We start with $n_{\sigma} =
  3$. Consider the $3$-cycle $\sigma=(1,2,3,1)$.  Suppose that nodes
  $1,2,3$ are contained in an arc of length less than or equal to
  $\pi$ as shown in Figure~\ref{fig:triangle-winding} (left). Then we
  denote $d_{cc}(\theta_1,\theta_2) = \alpha$ and
  $d_{cc}(\theta_2,\theta_3)=\beta$. It is then clear that
  $\alpha+\beta < \pi$. But the counterclockwise arc from node $3$ to
  node $1$ has length larger than $\pi$. As a result, by definition of
  the counterclockwise difference, we have
  $d_{cc}(\theta_3) = -\alpha-\beta$. This means that
  \begin{align*}
    w_{\sigma}(\theta) = d_{cc}(\theta_1,\theta_2) +
    d_{cc}(\theta_2,\theta_3)+ d_{cc}(\theta_3,\theta_1) = \alpha +
    \beta -\alpha - \beta = 0. 
  \end{align*}
  Similar argument can be used to show that in Figure~\ref{fig:triangle-winding} (middle) and
  Figur~\ref{fig:triangle-winding} (right) the winding number of $\sigma$ is $+1$ and $-1$,
  respectively. Now suppose that, for $k\in \mathbb{Z}_{\ge 0}$, $w_{\eta}(\theta)$ is an integer for every cycle
 $\eta$ with $n_{\eta}\le k$. We show that every cycle with length
 $k+1$ has an integer winding number. Consider the cycle $\sigma =
 (1,2,\ldots,k+1,1)$. Define the cycles $\sigma'=(1,2,3,1)$ and $\sigma''
=(1,3,\ldots,k+1,1)$. Then a straightforward calculation shows that
$v_{\sigma} = v_{\sigma'}+v_{\sigma''}$. As a result, we have 
\begin{align*}
w_{\sigma}(\theta) = v^{\top}_{\sigma}(B^{\top}\theta)
  =v^{\top}_{\sigma'}(B^{\top}\theta) + v^{\top}_{\sigma''}(B^{\top}\theta) =
  w_{\sigma'}(\theta) + w_{\sigma''}(\theta). 
\end{align*}
Since both $\sigma'$ and $\sigma''$ has length less than or equal to
$k$, we have $w_{\sigma'}(\theta),w_{\sigma''}(\theta)\in
\mathbb{Z}$. This implies that $w_{\sigma}(\theta)\in \mathbb{Z}$. 

 Now suppose that $\sigma=(1,\ldots, n_{\sigma})$. Note that, for every $\alpha,\beta\in
\Scircle$ such that $|\alpha-\beta|\le \gamma$ we have
$|\subscr{d}{cc}(\alpha,\beta)|\le \gamma$. Thus
\begin{align*}
  |w_{\sigma}(\theta)| =\tfrac{1}{2\pi}
  \left|\sum_{i=1}^{n_{\sigma}}\subscr{d}{cc}(\theta_{i},\theta_{i+1})
  \right| \le \tfrac{1}{2\pi}
  \sum_{i=1}^{n_{\sigma}}|\subscr{d}{cc}(\theta_{i},\theta_{i+1})| \le
  \tfrac{1}{2\pi}n_{\sigma}\gamma(\theta).
\end{align*}
Since $w_{\sigma}(\theta)$ is an integer, we get
$|w_{\sigma}(\theta)|\le\left\lfloor{\frac{\gamma
      n_\sigma}{2\pi}}\right\rfloor$. For the second inequality, note that $|\subscr{d}{cc}(\alpha,\beta)|< \pi$. Thus
\begin{align*}
\frac{\gamma(\theta) n_{\sigma}}{2\pi} < \frac{\ n_{\sigma}}{2}.
\end{align*}
This implies that $\left\lfloor\tfrac{\gamma(\theta) n_{\sigma}}{2\pi}
\right\rfloor< \tfrac{\ n_{\sigma}}{2}$. Since $\left\lfloor\tfrac{\gamma(\theta) n_{\sigma}}{2\pi}
\right\rfloor$ is an integer, we have 
$
\left\lfloor\frac{\gamma(\theta) n_{\sigma}}{2\pi} \right\rfloor\le \left\lceil\frac{\ n_{\sigma}}{2}\right\rceil -1
$. Regarding part~\ref{p2:wind_piece_constant}, first note that
part~\ref{p1:wind_integer_bound} implies $|w_{\sigma_i}| \le
\left\lceil{\frac{n_\sigma}{2}}\right\rceil -1$, for every $i\in
\{1,\ldots,m-n+1\}$. In turn, this implies that
\begin{align*}
      \Img(\mathbf{w}_\Sigma) \subseteq
      \Bigsetdef{[u_1,\ldots,u_{m-n+1}]^{\top}}{u_i\in \mathbb{Z},
        \ |u_i|\le \lceil{{n_{\sigma_i}}/{2}}\rceil -1,\text{ for }
        i\in\{1,\ldots,m-n+1\}}, 
\end{align*} 
and therefore the winding map $\mathbf{w}_\Sigma$ has a finite
range. Moreover, the counterclockwise angle difference map
$\map{d_{\mathrm{cc}}}{\torus}{[-\pi,\pi)}$ is piecewise continuous on
  $\torus$. Thus $\map{\mathbf{w}_\Sigma}{\torus^n_0}{\integer^{m-n+1}}$ is
  a piecewise continuous map with a finite range. Therefore, the winding
  map $\mathbf{w}_\Sigma$ is piecewise constant and this completes the
  proof of part~\ref{p2:wind_piece_constant}.
\end{proof}

\subsection{Proof of Theorem~\ref{thm:partition}}
\begin{proof}
Regarding part~\ref{p2:union}, it is clear
that, for every $\mathbf{u}\in \Img(\mathbf{w}_\Sigma)$, we have
$\Omega^G_{\mathbf{u}}\subset \torus_0^n$. This implies that
$\bigcup_{\mathbf{u}\in\Img(\mathbf{w}_\Sigma)}\Omega^G_{\mathbf{u}}\subseteq
\torus_0^n$. Moreover, for every $\theta\in \torus_0^n$, we have
$\theta\in \Omega^G_{\mathbf{w}_{\Sigma}(\theta)}$. Therefore, we have
$\bigcup_{\mathbf{u}\in\Img(\mathbf{w}_\Sigma)}\Omega^G_{\mathbf{u}}=\torus_0^n$. Finally,
by taking closure of both side of this equality and noting that
$\closure(\torus_0^n) = \torus^n$, we get 
\begin{align*}
\bigcup_{\mathbf{u}\in\Img(\mathbf{w}_\Sigma)}\closure(\Omega^G_{\mathbf{u}})=\torus^n.
\end{align*}
Regarding part~\ref{p3:intersection}, suppose that for some $\mathbf{u}\ne
\mathbf{v}$, we have $\theta\in
\Omega^G_{\mathbf{u}}\cap\Omega^G_{\mathbf{v}}$. Then, $\mathbf{w}_\Sigma
(\theta)=\mathbf{u}=\mathbf{v}$, which is a contradiction. Therefore, we
have $\Omega^G_{\mathbf{u}}\cap \Omega^G_{\mathbf{v}}=\emptyset$.
\end{proof}

\subsection{Proof of Theorem~\ref{thm:linerization+correspondence}}

\begin{proof}
  Regarding part~\ref{p1:diff}, fix $\theta\in \Omega^G_{\mathbf{u}}$. By
  definition of the winding number, it is easy to see that, for every cycle
  $\sigma$, we have $w_{\sigma}(\theta) = \tfrac{1}{2\pi}
  v^{\top}_{\sigma}(B^{\top}\theta)$, {\color{black} where the vector
    $(B^{\top}\theta)$ is defined in equation~\eqref{def:BTtheta}.}
  Applying this formula to every cycle in the cycle basis $\Sigma$, we get
  $\mathbf{u} = \mathbf{w}_{\Sigma}(\theta) = \tfrac{1}{2\pi}
  C_{\Sigma}(B^{\top}\theta)$. Multiplying both side of this equality by
  $C^{\dagger}_{\Sigma}$, we get $2\pi C^{\dagger}_{\Sigma}\mathbf{u} =
  C^{\dagger}_{\Sigma}C_{\Sigma}(B^{\top}\theta)$. {\color{black}Note that,
    by properties of the Moore\textendash{}Penrose inverse, we have
    $C^{\dagger}_{\Sigma}C_{\Sigma}C^{\dagger}_{\Sigma} =
    C^{\dagger}_{\Sigma}$. This implies that
\begin{align}\label{eq:winding_vector_C}
C^{\dagger}_{\Sigma}C_{\Sigma}\left((B^{\top}\theta)-2\pi
  C^{\dagger}_{\Sigma}\mathbf{u}\right) = \vect{0}_m
\end{align}
On the other hand, by properties of the Moore\textendash{}Penrose inverse, we have 
$C_{\Sigma}C_{\Sigma}^{\dagger}C_{\Sigma} = C_{\Sigma}$. Now suppose that
$\alpha \in \Ker(C_{\Sigma}^{\dagger}C_{\Sigma})$. This means that
$C_{\Sigma}^{\dagger}C_{\Sigma}\alpha = \vect{0}_m$. Multiplying both
side of this equality by $C_{\Sigma}$, we get $C_{\Sigma}\alpha = \vect{0}_m$. This
implies that $\alpha \in \Ker(C_{\Sigma})$. Therefore, we can deduce
that  $\Ker(C^{\dagger}_{\Sigma} C_{\Sigma})\subseteq
\Ker(C_{\Sigma})$. Moreover it is easy to show that $\Ker(C_{\Sigma})\subseteq
\Ker(C^{\dagger}_{\Sigma} C_{\Sigma})$. Therefore, we get $\Ker(C_{\Sigma}) = \Ker(C_{\Sigma}^{\dagger}C_{\Sigma})$. In turn, by Lemma~\ref{thm:shift_property}\ref{p2:ker}, we have $\Ker({C}_{\Sigma}) =
\Img(B^{\top})$. This implies that
$\Ker(C_{\Sigma}^{\dagger}C_{\Sigma})=\Img(B^{\top})$ and thus,
using the equation~\eqref{eq:winding_vector_C}, there exists a unique $\mathbf{x}\in
\vect{1}_n^{\perp}$ such that $(B^{\top}\theta) - 2\pi
C^{\dagger}_{\Sigma}\mathbf{u}= B^{\top}\mathbf{x}$. In other
words, there exists $x\in \vect{1}_n^{\perp}$ such that
$(B^{\top}\theta) = B^{\top}\mathbf{x}+ 2\pi
C^{\dagger}_{\Sigma}\mathbf{u}$}. This completes the proof
of~\ref{p1:diff}. Regarding part~\ref{p1.5:connec-comp}, we define the map
  $\map{\iota}{\Omega^G_{\mathbf{u}}/\torus^1}{P_{\mathbf{u}}}$
  by $\iota([\theta]) = \mathbf{x}$, where $\mathbf{x}\in \vect{1}_n^{\perp}$ is such that {\color{black} $(B^{\top}\theta) =
  B^{\top}\mathbf{x} + 2\pi C^{\dagger}_{\Sigma}\mathbf{u}$}. We show
  that $\iota$ is a {\color{black}bijection} between
  $\Omega^G_{\mathbf{u}}/\torus^1$ and $P_{\mathbf{u}}$. Consider the linear equations 
\begin{align}\label{eq:linear-winding}
C_{\Sigma}\mathbf{z} = \mathbf{u}.
\end{align}
By Lemma~\ref{thm:shift_property}\ref{p3:integer-basis}, the system of
linear equations~\eqref{eq:linear-winding} has an integer solution
$\mathbf{z}$. By Lemma~\ref{thm:shift_property}\ref{p1:rank}, the matrix $C_{\Sigma}$ is of rank
$m-n+1$ and thus it has linearly independent rows. This implies that
$C_{\Sigma}C_{\Sigma}^{\dagger} = I_{m-n+1}$. Therefore,  we have $C_{\Sigma}C_{\Sigma}^{\dagger}\mathbf{u} =
\mathbf{u}$ and in turn $C^{\dagger}_{\Sigma}\mathbf{u}$ is another
solution for~\eqref{eq:linear-winding}. Since both
$\mathbf{z}$ and $C_{\Sigma}^{\dagger}\mathbf{u}$ are solutions for~\eqref{eq:linear-winding}, we have
$\mathbf{z} - C^{\dagger }\mathbf{u}\in \Ker(C_{\Sigma})$. Using
Theorem~\ref{thm:shift_property}\ref{p2:ker}, we have $\Ker(C_{\Sigma}) =
\Img(B^{\top})$ and there exists $\alpha\in \vect{1}^{\perp}_n$ such that
$\mathbf{z} - C^{\dagger}_{\Sigma}\mathbf{u} = B^{\top}\alpha$.
Now, we define the open set $D =\{\mathbf{x}\in \vect{1}_n^{\perp}\mid \|B^{\top}\mathbf{x} + 2\pi
C^{\dagger}_{\Sigma}\mathbf{u}\|_{\infty} < \pi \}$. We first show
that the map $\iota$ is injective. Suppose that $[\theta_1],[\theta_2]\in
\Omega^G_{\mathbf{u}}/\Scircle$ are such that $\iota([\theta_1]) =
\iota([\theta_2])=\mathbf{x}$. This implies that {\color{black}$(B^{\top}\theta_1) =
(B^{\top}\theta_2) = B^{\top}\mathbf{x} + 2\pi C^{\dagger}_{\Sigma}\mathbf{u}$} and as
a result, we get $\theta_1 = \mathrm{rot}_s(\theta_2)$, for some $s
\in [-\pi,\pi)$. This completes the proof of the fact that $\iota$ is
injective. Now we show that $\iota$ is surjective. Let $\mathbf{x}\in
D$. Then we define $\theta\in \Scircle$ by
\begin{align*}
\theta=\mathrm{mod}(\mathbf{x} -2\pi \alpha, 2\pi).
\end{align*}
Suppose that $e = (i,j)\in
\mathcal{E}$. Then we have 
\begin{align*}
\left(B^{\top}\theta\right)_e = \subscr{d}{cc}(\theta_i,\theta_j) = (x_i
  - 2\pi \alpha_i) -
  (x_j - 2\pi \alpha_j) + 2\pi z_i, 
\end{align*}
where the second equality is because of the fact that 
\begin{align*}
\left|(x_i - 2\pi \alpha_i)-(x_j - 2\pi \alpha_j) + 2\pi z_i\right| \le
  \|B^{\top}\mathbf{x} - 2\pi \alpha + 2\pi\mathbf{z}\|_{\infty} = \|B^{\top}\mathbf{x} + 2\pi C^{\dagger}_{\Sigma}\mathbf{u}\|_{\infty} < \pi.
\end{align*}
Therefore, we have {\color{black}$(B^{\top}\theta ) = B^{\top}(\mathbf{x} -
  2\pi \alpha) + 2\pi \mathbf{z} = B^{\top}\mathbf{x} + 2\pi
  C^{\dagger}_{\Sigma} \mathbf{u}$.} This implies that $\iota([\theta]) =
\mathbf{x}$ and completes the proof of bijection of
$\iota$. {\color{black}Now we show that
  $\map{\iota}{\Omega^G_{\mathbf{u}}/\torus^1}{P_{\mathbf{u}}}$ is a
  continuous map. Let us define the map
  $\map{\xi}{\Omega^G_{\mathbf{u}}}{P_{\mathbf{u}}}$ by $\xi(\theta) =
  \iota([\theta])$, for every $\theta \in \Omega^G_{\mathbf{u}}$. Then,
  by~\cite[Theorem 22.2]{JM:00}, the map $\iota$ is continuous if and only
  if the map $\xi$ is continuous. Suppose that $\phi,\psi\in
  \Omega^G_{\mathbf{u}}$ and $y = \iota([\phi])$ and $z =
  \iota([\psi])$. Note that, the graph $G$ is connected. Thus,
  by~\cite[Lemma 6.12(iii)]{FB:20}, we have $(BB^{\top})^{\dagger}BB^{\top}
  = I_n - \frac{1}{n}\vect{1}_n\vect{1}_{n}^{\top}$. As a result, we have
\begin{align}\label{eq:inequality-Btheta}
  \|y- z\|\le \|(BB^{\top})^{\dagger}B\|\|B^{\top}y - B^{\top}z\|= \|(BB^{\top})^{\dagger}B\|\|(B^{\top}\phi) - (B^{\top}\psi)\|.
\end{align}
On the other hand, since $\iota$ is a bijection, we have
$\left\|(B^{\top}\theta)\right\|_{\infty}<\pi$, for every $\theta\in
\Omega^{G}_{\mathbf{u}}$. As a result, the map $\theta \mapsto
(B^{\top}\theta)$ is continuous on $\Omega^{G}_{\mathbf{u}}$. This together
with the inequality~\eqref{eq:inequality-Btheta} imply that the map $\xi:
\theta \mapsto x$ is continuous and thus $\iota$ is continuous. Moreover,
$P_{\mathbf{u}}$ is a Hausdorff space. Therefore, by~\cite[Theorem
  26.6]{JM:00}, the map $\iota$ is a homeomorphism on any compact subset of
$\Omega^{G}_{\mathbf{u}}/\torus^1$.}\end{proof}

\section{Proofs of results in Section~\ref{sec:at-most-uniqueness}}

\subsection{Proof of Theorem~\ref{thm:at_most_unique}}\label{app:atmostuniqueness}

\begin{proof}
  Suppose that $(f,\phi)$ and $(g,\psi)$ are two solutions for the flow
  network problem~\eqref{eq:flow_form} with
  $\mathbf{w}_{\Sigma}(\phi)=\mathbf{w}_{\Sigma}(\psi)=\mathbf{u}$. Then,
  by Theorem~\ref{thm:equiv-kur-flow}, $f$ and $g$ satisfies the
  $\mathbf{u}$ winding balance equation~\eqref{eq:winding-form}. Thus, we
  have $f,g\in \Fsd$ and
  \begin{align*}
    T_{\mathbf{u}}(f) = f - \Hmin \prjcyc
    (h_{\gamma}^{-1}(\mathcal{A}^{-1}f) - 2\pi
    C^{\dagger}_{\Sigma}\mathbf{u}) = f
  \end{align*}
  where the last equality holds because $f$
  satisfies~\eqref{eq:winding-balance}. Similarly, one can show that $T_{\mathbf{u}}(g)
  = g$. Thus, by Theorem~\ref{thm:convergence_ifexists}\ref{p1:converge}, we get $f=g$. By the
  equivalence of parts~\ref{p1:check} and~\ref{p2:exists_f} in
  Theorem~\ref{thm:convergence_ifexists}, we get that $\phi =
  \psi$. As a result, there should exists at most one solution for the
  flow network problem~\ref{eq:flow_form} with the phase angle in the winding cell
  $\Omega^G_{\mathbf{u}}$. 
\end{proof}

\subsection{Proof of Corollary~\ref{cor:graphs-short-cycles}}

\begin{proof}
Regarding part~\ref{p1:shortbasis}, the result is already proved in
Theorem~\ref{thm:at_most_unique}. {\color{black}Regarding part~\ref{p2:shortbasis}, suppose that $(f,\theta)$ is a solution of the
flow network problem~\eqref{eq:flow_form} such that $\theta\in
\Omega^G_{\mathbf{u}}$. Then, for every $\sigma\in \Sigma$,
\begin{align*}
  |w_{\sigma}(\theta)| =
  \tfrac{1}{2\pi}\left|\sum_{i=1}^{n_{\sigma}}\subscr{d}{cc}(\theta_i,\theta_{i+1})\right|
  \le
  \tfrac{1}{2\pi}\sum_{i=1}^{n_{\sigma}}\left|\subscr{d}{cc}(\theta_i,\theta_{i+1})\right|
  \le \tfrac{1}{2\pi} n_{\sigma}\gamma, 
\end{align*}
where for the last equality we used 
$\left|\subscr{d}{cc}(\theta_i,\theta_{i+1})\right|=|\theta_i-\theta_j|
\le \gamma$. Since $w_{\sigma}(\theta)$ is an integer, we get
\begin{align}\label{eq:upperbound}
 |w_{\sigma}(\theta)| \le
  \left\lfloor\frac{n_{\sigma}\gamma}{2\pi}\right\rfloor. 
\end{align}
Moreover, we know that $n_{\sigma}\le k$. This implies
that $|w_{\sigma}(\theta)|\le \tfrac{1}{2\pi} \gamma\gamma$. As a result, we
should have $\|\mathbf{u}\|_{\infty}\le
\left\lfloor\frac{k\gamma}{2\pi}\right\rfloor$. This implies that, if $\|\mathbf{u}\|_{\infty}>
\left\lfloor\frac{k\gamma}{2\pi}\right\rfloor$, we have
$\mathbf{u}\not\in \Img(\mathbf{w}_{\Sigma})$ and thus there is no solution $(f,\theta)$
for the flow network problem~\eqref{eq:flow_form} such that $\theta\in
\Omega^G_{\mathbf{u}}$. Regarding part~\ref{p3:upperbound}, using the
inequality~\eqref{eq:upperbound}, we get $\Img(\mathbf{w}_{\Sigma})
\le \prod_{i=1}^{m-n+1}|w_{\sigma}(\theta)| \le \prod_{i=1}^{m-n+1}\left\lfloor\frac{n_{\sigma_i}\gamma}{2\pi}\right\rfloor$.}
\end{proof}

\subsection{Proof of Theorem~\ref{thm:prop-power-flow}}

\begin{proof}
{\color{black} Recall that, for $\theta\in \torus^n$, the vector $(B^{\top}\theta)$ is defined
    by equation~\eqref{def:BTtheta}.} Regarding part~\ref{p1:decomposition}, note that
  $\Img(\mathcal{A}B^{\top})\oplus\Ker(B) = \real^m$. Therefore, there exists a unique
  decomposition $f = \supscr{f}{cut} + \supscr{f}{cyc}$, where
  $\supscr{f}{cut}\in \Img(\mathcal{A}B^{\top})$ and $\supscr{f}{cyc}\in
  \Ker(B)$. Additionally, the {\color{black}edge} vector $f\in \real^m$ satisfies the
  flow balance equation~\eqref{eq:f-KCL}, that is $Bf =\pactive$. Note
  that $\supscr{f}{cyc}\in \Ker(B)$ and this implies that $B\supscr{f}{cut}
  = \pactive$. Moreover, $\subscr{f}{cut} \in \Img(\mathcal{A}B^{\top})$ and therefore
  there exists $x\in \vect{1}_n^{\perp}$ such that $\supscr{f}{cut} =
  \mathcal{A}B^{\top}x$. This implies that $B\mathcal{A} B^{\top} x = L x = \pactive $. Finally, one
  can compute $\supscr{f}{cut} = \mathcal{A}B^{\top}x = \mathcal{A}B^{\top}L^{\dagger}\pactive$.  This completes the proof of
  part~\ref{p1:decomposition}. Regarding part \ref{p3:f-unique}, note that, we have $\pactive = Bf
  = B g$. This implies that $B(f-g)=0$ and in turn
  $f-g\in \Ker(B)$. Regarding part \ref{p4:f-unique-to-u}, given $\phi=\psi$, it is trivial to see that $f=g$.
  Now suppose that, for $\phi,\psi$, we have $f=g$. This implies that,
  for every $e=(i,j)\in \mathcal{E}$, we have 
  $h_e(\phi_i-\phi_j) = h_e(\psi_i-\psi_j)$. Since $h_e$ is
  monotone on the interval $[-\gamma,\gamma]$, it is invertible on this
  interval. This implies that $\phi_i-\phi_j =
  \psi_i-\psi_j$, for every $(i,j)\in \mathcal{E}$. This means that {\color{black}$(B^{\top}\phi)=(B^{\top}\psi)$}. By Theorem~\ref{thm:linerization+correspondence}\ref{p1:diff}, there exists $\mathbf{x},\mathbf{y}\in
  \vect{1}^{\perp}_n$ such that {\color{black}
  \begin{align*}
    (B^{\top}\phi) = B^{\top}\mathbf{x} + 2\pi C^{\dagger}_{\Sigma}(\mathbf{w}_{\Sigma}(\phi)),\\
    (B^{\top}\psi) = B^{\top}\mathbf{y} + 2\pi C^{\dagger}_{\Sigma}(\mathbf{w}_{\Sigma}(\psi))
  \end{align*}}
  We multiply both side of the above equations by $C_{\Sigma}$. Note
  that, by 
  Lemma~\ref{thm:shift_property}\ref{p2:ker} we have
  $\Ker(C_{\Sigma})=\Img(B^{\top})$. This implies that
  \begin{align*}
    C_{\Sigma}C^{\dagger}_{\Sigma}(\mathbf{w}_{\Sigma}(\psi)-\mathbf{w}_{\Sigma}(\phi))
    = \vect{0}_{m-n+1}.
  \end{align*}
  By Lemma~\ref{thm:shift_property}\ref{p1:rank}, the matrix
  $C_{\Sigma}$ is of rank $m-n+1$ and therefore $C_{\Sigma}$ has
  linearly independent rows. Thus, $C_{\Sigma}C^{\dagger}_{\Sigma}= I_{m-n+1}$ and this implies that
  $\mathbf{w}_{\Sigma}(\psi)=\mathbf{w}_{\Sigma}(\phi)$. Now suppose
  that $\mathbf{w}_{\Sigma}(\psi)=\mathbf{w}_{\Sigma}(\phi)$, the
  $(f,\phi)$ and $(g,\psi)$ are two solutions for the flow network
  problem~\eqref{eq:flow_form} with the property that $\phi,\psi\in
  \Omega^G_{\mathbf{u}}$. Thus, by Theorem~\eqref{thm:at_most_unique}, we have $f=g$ and
  $\phi=\psi$ modulo rotations. Finally, if $\phi = \mathrm{rot}_s(\psi)$ for
  some $s\in [-\pi,\pi)$, then by equation~\eqref{eq:f-physics}, it is clear that
  $f=g$. Regarding part~\ref{p5:flow_comparison}, we start by introducing the
  function $h:\real^m\to \real^m$ defined by
  \begin{align*}
    (h(y))_{e} = h_e(y_e),\qquad\mbox{for every } e\in \mathcal{E}.
    \end{align*}
    By the angle
    constraint~\eqref{eq:f-constraints}, for every $e\in \mathcal{E}$,
    we have $|\theta_i-\theta_j|\le \gamma$ and $|\psi_i-\psi_j|\le
    \gamma$. Note also that, for every $e\in \mathcal{E}$, 
    the flow $h_e$ is a strictly increasing on the interval
    $[-\gamma,\gamma]$. This implies that
    \begin{align*}
      a_{ij}\left((\phi_i-\phi_j) -
      (\psi_i-\psi_j)\right)\left(h_e(\phi_i-\phi_j)-h_e(\psi_i-\psi_j)\right)
      \ge 0. 
      \end{align*}
      Summing the above equation over all $e=(i,j)\in \mathcal{E}$, we get
    \begin{align}\label{eq:inequality_good}
      \left(B^{\top}\phi -
      B^{\top}\psi\right)^{\top}\mathcal{A}\left(h(B^{\top}\phi) - h(B^{\top}\psi)\right) \ge 0.
    \end{align}
    Moreover, by Theorem~\ref{thm:partition}\ref{p1:diff}, there exists $\mathbf{x},\mathbf{x}'\in
    \vect{1}_n^{\perp}$ such that {\color{black}
    \begin{align}\label{eq:toomanylabel}
      (B^{\top}\phi) = B^{\top}\mathbf{x} + 2\pi
      C_{\Sigma}^{\dagger}w_{\sigma}(\phi),\qquad
      (B^{\top}\psi) = B^{\top}\mathbf{x}' + 2\pi C_{\Sigma}^{\dagger}w_{\sigma}(\psi).
    \end{align}}
    Replacing~\eqref{eq:toomanylabel} into~\eqref{eq:inequality_good}, we get 
    \begin{align}
      \left((B^{\top}\phi) - (B^{\top}\psi)\right)^{\top}&  \mcA
                                               (h(B^{\top}\phi) - h(B^{\top}\phi)) \nonumber \\
                                             & = \left(B^{\top}(\mathbf{x}-\mathbf{x}') + 2\pi
                                               C^{\dagger}_{\Sigma}(w_{\sigma}(\phi)-w_{\sigma}(\psi))\right)^{\top}
                                               \mcA  \left(h(B^{\top}\phi) - h(B^{\top}\phi)\right) \nonumber
      \\
                                             & = \left(B^{\top}(\mathbf{x}-\mathbf{x}')\right)^{\top}
                                               \mcA  \left(h(B^{\top}\phi) - h(B^{\top}\phi)\right) \label{eq:zero-term}
      \\
                                             &  \phantom{=}\enspace + \left(2\pi
                                               C^{\dagger}_{\Sigma}(w_{\sigma}(\phi)-w_{\sigma}(\psi))\right)^{\top}  \mcA
                                               \left(h(B^{\top}\phi) - h(B^{\top}\phi)\right). \label{eq:nonzero-term}
    \end{align}
    Note that the term~\eqref{eq:zero-term} is 
    \begin{equation*}
      \left(B^{\top}(\mathbf{x}-\mathbf{x}')\right)^{\top}
      \mcA \left(h(B^{\top}\phi) - h(B^{\top}\phi)\right)
      = (\mathbf{x} - \mathbf{x}')^{\top} B\mcA
      \left(h(B^{\top}\phi) - h(B^{\top}\phi)\right). 
    \end{equation*}
    Since $(f,\phi)$ and $(g,\psi)$ are solutions for the flow network problem~\eqref{eq:flow_form}, we get
    \begin{align*}
      B\mcA \left(h(B^{\top}\phi) - h(B^{\top}\phi)\right) =
      B (f - g) = \pactive-\pactive = \vect{0}_n. 
    \end{align*}
    Therefore, the term~\eqref{eq:zero-term} is equal to zero. Moreover,
    since $\sigma$ is the only cycle for $G$, we have $C_{\Sigma}^{\dagger} =
    \frac{1}{n}v_{\sigma}$. Therefore, the term~\eqref{eq:nonzero-term} can be written as 
    \begin{align*}
      \left(2\pi C^{\dagger}_{\Sigma}(w_{\sigma}(\phi)-w_{\sigma}(\psi))\right)^{\top}  &\mcA
                                                                                          \left(h(B^{\top}\phi) - h(B^{\top}\phi)\right)
      \\ & = \frac{1}{n}(w_{\sigma}(\phi)-w_{\sigma}(\psi)) v_{\sigma}^{\top} \mcA
           \left(h(B^{\top}\phi) - h(B^{\top}\phi)\right)
      \\  &= \frac{1}{n}(w_{\sigma}(\phi)-w_{\sigma}(\psi)) \left(v_{\sigma}^{\top}f -v_{\sigma}^{\top}g\right).
    \end{align*}
    Therefore, the inequality~\eqref{eq:inequality_good} can be written as
    $\frac{1}{n}(w_{\sigma}(\phi)-w_{\sigma}(\psi))
    \left(v_{\sigma}^{\top}f -v_{\sigma}^{\top}g\right)\ge
    0$. This completes the proof of part~\ref{p5:flow_comparison}.
  \end{proof}

\section{Proofs of results in Section~\ref{sec:complete-solver}}
It is worth mentioning that the proofs in this section are independent of
the results in Section~\ref{sec:at-most-uniqueness}. Therefore, we use some
of the results in this section to prove Theorem~\ref{thm:at_most_unique}.

  \subsection{Proof of Theorem~\ref{thm:equiv-kur-flow}}
  
  \begin{proof}
    {\color{black} Recall that, for $\theta\in \torus^n$, the vector $(B^{\top}\theta)$ is defined
    by equation~\eqref{def:BTtheta}.} Regarding~\ref{p1:kur_equi} $\Longrightarrow$ \ref{p2:loop_flow_equi},
    Suppose that $(f,\theta)$ is a solution of
    flow network problem~\eqref{eq:flow_form} with the property that $\theta \in
    \Omega^G_{\mathbf{u}}$. First note, for every $e=(i,j)\in
    \mathcal{E}$, we have $f_e =a_{ij} h_e(\theta_i-\theta_j)$ and thus $a_{ij}^{-1}f_e = h_e(\theta_i-\theta_j)$. Using the
    angle constraint~\eqref{eq:f-constraints}, we have
    $|\theta_i-\theta_j|\in [-\gamma,\gamma]$, for every $e=(i,j)\in
    \mathcal{E}$. This implies that {\color{black}
    \begin{align}\label{eq:1}
      |f_e|\le a_{ij}|h_e(\gamma)|,\qquad\mbox{for } e=(i,j)\in \mathcal{E}.
    \end{align}}
    On the other hand, for every $y\in [-\gamma,\gamma]$ and every
    $e=(i,j)\in \mathcal{E}$, we have $h_e(y)
    = (h_{\gamma})_e(y)$. This means that $f_e =a_{ij}
    h_e(\theta_i-\theta_j) =
    a_{ij}(h_{\gamma})_e(\theta_i-\theta_j)$. Since $(h_{\gamma})_e$ is
    monotone on $\real$, it is invertible and $\theta_i-\theta_j=
    (h_{\gamma})_e^{-1}(a_{ij}^{-1}f_e)$. In the vector form, we get {\color{black}
    \begin{align}\label{eq:angle}
      (B^{\top}\theta)= h_{\gamma}^{-1}(\mcA^{-1}f).
    \end{align}}
    Using Theorem~\ref{thm:linerization+correspondence}\ref{p1:diff}, there exists $\mathbf{x}\in
    \vect{1}_n^{\perp}$ such that {\color{black}$(B^{\top}\theta) = B^{\top}\mathbf{x} + 2\pi
    C^{\dagger}_{\Sigma}\mathbf{u}$}. Plugging in the equation~\eqref{eq:angle},
    we get 
    \begin{align}\label{eq:angle2}
      h_{\gamma}^{-1}(\mcA^{-1}f) - 2\pi C^{\dagger}_{\Sigma}\mathbf{u} = B^{\top}\mathbf{x}.
    \end{align} 
    Multiplying both side of equation~\eqref{eq:angle2} by $\prjcyc$ and
    noting the fact that $\prjcyc \Hmin \mcA B^{\top}\alpha = \vect{0}_m$, we get
    \begin{align}\label{eq:2}
      \prjcyc \Hmin \mcA \left(h_{\gamma}^{-1}(\mcA^{-1}f) - 2\pi
      C^{\dagger}_{\Sigma}\mathbf{u}\right)=\vect{0}_m.
    \end{align}
    Combining inequality~\eqref{eq:1} with the equality~\eqref{eq:2},
    we deduce that $f$ is a solution of the $\mathbf{u}$-winding balance
    equation~\eqref{eq:winding-form}. Regarding~\ref{p2:loop_flow_equi} $\Longrightarrow$~\ref{p1:kur_equi},
    suppose that $f\in \real^m$ is a solution for the $\mathbf{u}$-winding balance
    equation~\eqref{eq:winding-form}. Note that $\Ker(\prjcyc \Hmin \mcA) =
    \Img(B^{\top})$. Thus, by equality~\eqref{eq:winding-equations},
    there exists $\mathbf{x}\in \vect{1}_n^{\perp}$ such that
    \begin{align*}
      h_{\gamma}^{-1}(\mcA^{-1}f) = B^{\top}\mathbf{x} +  2\pi
      C^{\dagger}_{\Sigma}\mathbf{u}.
    \end{align*}
    Note that, by the constraint~\eqref{eq:winding-flow-constraint}, for every $e=(i,j)\in
    \mathcal{E}$, we have {\color{black}
    \begin{align*}
      \left|a_{ij} f_e \right|\le |h_e(\gamma)|=\max_{y\in [-\gamma,\gamma]}h_e(y). 
    \end{align*}}
    Since, for each $e\in \mathcal{E}$, $(h_{\gamma})_e$ is monotone on $\real$ and
    $(h_{\gamma})_e(y) = h_e(y)$, for every $y\in [-\gamma,\gamma]$,
    we get that $\left\|h^{-1}_{\gamma}(\mathcal{A}^{-1}f)\right\|_{\infty} \le
    \gamma$. As a result, we have
    \begin{align}\label{eq:inequality}
      \left\|B^{\top}\mathbf{x} +  2\pi C^{\dagger}_{\Sigma}\mathbf{u}\right\|_{\infty}\le \gamma
    \end{align}
    {\color{black}This means that $\mathbf{x}\in P_{\mathbf{u}}$}. Now, by Theorem~\ref{thm:linerization+correspondence}\ref{p1.5:connec-comp} there exists
    $\theta\in \Omega^G_{\mathbf{u}}$ such that {\color{black}$(B^{\top}\theta) = B^{\top}\mathbf{x} +  2\pi
    C^{\dagger}_{\Sigma}\mathbf{u}$ and thus $h_{\gamma}^{-1}(\mcA^{-1}f) = (B^{\top}\theta)$}. On the other hand, for every $y\in [-\gamma,\gamma]$ and every
    $e=(i,j)\in \mathcal{E}$, we have $h_e(y)
    = (h_{\gamma})_e(y)$. As a result, for
    every $e=(i,j)\in \mathcal{E}$,
    \begin{align}\label{eq:goodgood}
      f_e = a_{ij} h_e(\theta_i-\theta_j).
    \end{align}
    Moreover, the inequality~\eqref{eq:inequality} implies that 
    \begin{align}\label{eq:theta-inequality}
  |\theta_i-\theta_j|\le \gamma. 
    \end{align}
    The equations~\eqref{eq:goodgood},~\eqref{eq:winding-balance},
    and~\eqref{eq:theta-inequality} imply that
    $(f,\theta)$ is a solution for the flow network problem. Finally, we
    show that $\theta$ is the unique phase
    angle vector in $\Omega^G_{\mathbf{u}}$ for which $(f,\theta)$ is a solution of flow network
    problem~\eqref{eq:flow_form}. Suppose that $(f,\phi)$ is another
    solution for flow network problem~\eqref{eq:flow_form} with $\phi\in
    \Omega^G_{\mathbf{u}}$. For every $e=(i,j)\in \mathcal{E}$, we have $f_e=a_{ij}h_e(\theta_i-\theta_j) = a_{ij}
    h_e(\phi_i-\phi_j)$. Moreover,  $|\theta_i-\theta_j|\le \gamma$
    and $|\phi_i-\phi_j|\le \gamma$ and $h_e$ is monotone on
    $[-\gamma,\gamma]$. This implies that $\theta_i-\theta_j =
    \phi_i-\phi_j$, for every $(i,j)\in \mathcal{E}$. In the vector
    form, this leads to {\color{black}$(B^{\top}\theta) = (B^{\top}\phi)$}. Note that
    $\theta,\phi\in \Omega^G_{\mathbf{u}}$. Thus, there exists $\mathbf{x},\mathbf{y}\in
    P_{\mathbf{u}}$ such that the {\color{black}bijection} in
    Theorem~\ref{thm:linerization+correspondence}\ref{p1.5:connec-comp} maps $\theta$ to $\mathbf{x}$ and maps $\phi$
    to $\mathbf{y}$. Using Theorem~\ref{thm:linerization+correspondence}\ref{p1:diff}, we get  {\color{black}
    \begin{align*}
      (B^{\top}\theta) = B^{\top}\mathbf{x} + 2\pi
      C_{\Sigma}^{\dagger}\mathbf{u},\qquad (B^{\top}\phi) = B^{\top}\mathbf{y} + 2\pi C_{\Sigma}^{\dagger}\mathbf{u}.
      \end{align*}}Since {\color{black}$(B^{\top}\theta) = (B^{\top}\phi)$}, we get that
$B^{\top}(\mathbf{x}-\mathbf{y})=\vect{0}_m$. Since $G$ is strongly
connected, $\Ker(B^{\top})=\mathrm{span}\{\vect{1}_n\}$. Since both $\mathbf{x}$
and $\mathbf{y}$ are in $\vect{1}_n^{\perp}$, this implies that
$\mathbf{x}=\mathbf{y}$ and as a result $\theta = \phi$, modulo rotations.
\end{proof}

\subsection{Proof of Theorem~\ref{thm:convergence_ifexists}. }

\begin{proof}
Regarding parts~\ref{p1:converge} and~\ref{p2:speed}, note that, for every $\eta_1,\eta_2\in\Fsd$, 
\begin{align*}
T_{\mathbf{u}}(\eta_1) - T_{\mathbf{u}}(\eta_2) = \eta_1 - \eta_2 +
\prjcyc\Hmin \mcA\left(h_{\gamma}^{-1} (\eta_1) - h_{\gamma}^{-1} (\eta_2)\right).
\end{align*}
Since $\eta_1,\eta_2\in \Fsd$, we get $\eta_1-\eta_2\in \Ker(B)$. This
implies that $\eta_1-\eta_2 = \prjcyc (\eta_1-\eta_2)$. As a result, 
\begin{align*}
T_{\mathbf{u}}(\eta_1) - T_{\mathbf{u}}(\eta_2) = \prjcyc
\left(\eta_1 - \eta_2 - \Hmin \mcA\left(h_{\gamma}^{-1} (\eta_1) - h_{\gamma}^{-1} (\eta_2)\right)\right).
\end{align*}
For the brevity of notation, we introduce the positive diagonal matrix
$D\in \real^{m\times m}$ by $D = \Hmin\mcA$. By the Mean Value Inequality~\cite[Proposition
2.4.7]{RA-JEM-TSR:88}, 
\begin{align*}
\|T_{\mathbf{u}}(\eta_1) - T_{\mathbf{u}}(\eta_2)\|_{D} \le \sup_{x\in 
\real^m}\left\|\prjcyc\left(I_m - \Hmin\nabla h_{\gamma}^{-1}(x)\right)\right\|_{D}\|\eta_1-\eta_2\|_{D}. 
\end{align*} 
Using the fact that $\|\mathbf{x}\|_{D} =
\|D^{\frac{1}{2}}\mathbf{x}\|_{2}$, we get  
\begin{align*}
\left\|\prjcyc\left(I_m -\Hmin\nabla h_{\gamma}^{-1}(x)\right)\right\|_{D} =\left\|D^{\frac{1}{2}}\prjcyc\left(I_m -\Hmin\nabla h_{\gamma}^{-1}(x)\right) D^{-\frac{1}{2}}\right\|_2.  
\end{align*} 
Note that, by triangle inequality, we have
\begin{align*}
  \big\|D^{\frac{1}{2}}  \prjcyc\big(I_m - \Hmin\nabla h_{\gamma}^{-1}(x)\big) D^{-\frac{1}{2}}\big\|_2 
  & \le \left\|D^{\frac{1}{2}}\prjcyc
    D^{-\frac{1}{2}}\right\|_2\left\|D^{\frac{1}{2}}\left(I_m -
    \Hmin\nabla h_{\gamma}^{-1}(x)\right) D^{-\frac{1}{2}}\right\|_2 \\
  & = \left\|D^{\frac{1}{2}}\left(I_m - \Hmin\nabla h_{\gamma}^{-1}(x)\right)D^{-\frac{1}{2}}\right\|_2,
\end{align*}
where in the last inequality we used the fact that
$D^{\frac{1}{2}}\prjcyc
  D^{-\frac{1}{2}}$ is a symmetric idempotent matrix and
  therefore, its $2$-norm is equal to $1$. Moreover, for every $x\in
\real^m$, $\nabla h^{-1}_{\gamma}(x)$ is a diagonal matrix such that, for every
$i\in \{1,\ldots,m\}$, we have 
\begin{align*}
\left|(\nabla h^{-1}_{\gamma}(x))_{ii}\right| \le (\Hmax^{-1})_{ii},\qquad\mbox{ for all } x\in \real^m
\end{align*}
Additionally, by~\cite[Theorem 5.6.36]{RAH-CRJ:12}, we have 
\begin{align*}
\left\|D^{\frac{1}{2}}\left(I_m - \Hmin\nabla h_{\gamma}^{-1}(x)\right)D^{-\frac{1}{2}}\right\|_2 = \left\|I_m - \Hmin\nabla h_{\gamma}^{-1}(x)\right\|_{\infty} = \max_i\left\{\left|1 - (\Hmin)_{ii}\left(\nabla
  h^{-1}_{\gamma}(x)\right)_{ii}\right|\right\}. 
\end{align*}
Note that, for every $x\in \real^m$ and every $i\in\{1,\ldots,m\}$, we have 
\begin{align*}
\left|1 - (\Hmin)_{ii}\left(\nabla h^{-1}_{\gamma}(x)\right)_{ii}\right|\le \left\|I_m - \Hmin \Hmax^{-1}\right\|_{\infty}.
\end{align*}
This implies that 
\begin{align*}
\sup_{x\in \real^m} \left\|D^{\frac{1}{2}}\left(I_m - \Hmin\nabla
  h^{-1}_{\gamma}(x)\right)D^{-\frac{1}{2}}\right\|_2 = \sup_{x\in\real^m}\left\|I_m - \Hmin\nabla
  h_{\gamma}^{-1}(x)\right\|_{\infty}\le \left\|I_m - \Hmin \Hmax^{-1}\right\|_{\infty}. 
\end{align*}
As a result, we get 
\begin{align}\label{eq:contraction}
\|T_{\mathbf{u}}(\eta_1) - T_{\mathbf{u}}(\eta_2)\|_{D} \le 
\left\|I_m - \Hmin \Hmax^{-1}\right\|_{\infty}\|\eta_1-\eta_2\|_{D}. 
\end{align} 
Thus, $T_{\mathbf{u}}:\Fsd\to \Fsd$ is a contraction mapping with respect to
the norm $\|.\|_{\mcA}$ on the vector space $\Fsd$. Therefore, by the
Banach Fixed-point Theorem, there exists a unique $f^*\in \Fsd$ 
such that
$f^* = T_{\mathbf{u}}(f^*)$. This completes the proof of 
part~\ref{p1:converge}. Regarding part~\ref{p2:speed}, using the contraction property of
$T_{\mathbf{u}}$ in equation~\eqref{eq:contraction}, for every $k\in \mathbb{N}$,
we get
\begin{align*}
\|T_{\mathbf{u}}^{(k+1)}(f^{(0)}) - T_{\mathbf{u}}^{(k)}(f^{(0)})\|_{D} \le 
\left\|I_m - \Hmin \Hmax^{-1}\right\|_{\infty}\|T_{\mathbf{u}}^{(k)}(f^{(0)}) - T_{\mathbf{u}}^{(k-1)}(f^{(0)})\|_{D}.
\end{align*}
This implies that, for every $k\in \mathbb{N}$, we have 
\begin{align*}
\|f^{(k+1)} - f^{(k)}\|_{D} \le 
\left\|I_m - \Hmin \Hmax^{-1}\right\|_{\infty}^{k}\|T_{\mathbf{u}}(f^{(0)}) - f^{(0)}\|_{D} .
\end{align*}
Now we show the equivalence of the statements~\ref{p1:check}
and~\ref{p2:exists_f}. 

\ref{p1:check} $\Longrightarrow$ \ref{p2:exists_f}: 
Since $f^*_{\mathbf{u}}$ is the limit point of the converging
sequence $\{T^k_{\mathbf{u}}\}_{k\in \mathbb{N}}$,  we have
$T_{\mathbf{u}}(f^*_{\mathbf{u}}) = f^*_{\mathbf{u}}$. Since $\Hmin\in
\real^{m\times m}$ is an invertible diagonal matrix, we get 
\begin{align*}
\prjcyc\Hmin \mcA (h_{\gamma}^{-1}(\mathcal{A}^{-1}f^*_{\mathbf{u}}) - 2\pi
  C^{\dagger}_{\Sigma}\mathbf{u}) = \vect{0}_m
\end{align*}
Adding the condition that, for every $e=(i,j)\in \mathcal{E}$, we have
{\color{black}$\left|(f^*_{\mathbf{u}})_e\right|\le a_{ij}|h_e(\gamma)|$}, we deduce that
$f^*_{\mathbf{u}}$ satisfies $\mathbf{u}$-winding balance
equation~\eqref{eq:winding-form}. Now we show that $f^*_{\mathbf{u}}$ is
the unique solution of the $\mathbf{u}$-winding balance
equation~\eqref{eq:winding-form}. Suppose that there exists $g\in \real^m$
such that $g$ satisfies the $\mathbf{u}$-winding balance
equation~\eqref{eq:winding-form}. In this case, by
equation~\eqref{eq:winding-balance}, we have that $g\in \subscr{F}{sd}$ and
\begin{align*}
  T_{\mathbf{u}}(g) = g - \prjcyc \Hmin \mcA (h^{-1}_{\gamma}(\mathcal{A}^{-1}g)-2\pi
  C_{\Sigma}^{\dagger}\mathbf{u}) = g,
\end{align*}
where the last equality holds because of~\eqref{eq:winding-equations}. As a result $g$ is
a fixed-point for the map $T_{\mathbf{u}}$. Thus, by part~\ref{p1:converge}, we
should have $g = \lim_{k\to\infty} T^k_{\mathbf{u}}(g) =f^*_{\mathbf{u}}$.

\ref{p2:exists_f} $\Longrightarrow$ \ref{p1:check}: Suppose that
$f^*_{\mathbf{u}}$ is a solution for the $\mathbf{u}$-winding
balance equation~\eqref{eq:winding-form}. It should satisfies the flow
constraint condition~\eqref{eq:winding-flow-constraint}. This implies that
{\color{black}$\left|(f^*_{\mathbf{u}})_e\right|\le a _{ij}|h_e(\gamma)|$}, for every
$e=(i,j)\in \mathcal{E}$. Regarding \ref{p2:exists_f} $\iff$ \ref{p3:exists_theta}: the
proof follows from Theorem~\ref{thm:equiv-kur-flow} Moreover, if
$(f^*_{\mathbf{u}},\theta^*)$ is the unique solution of the flow
network problem~\eqref{eq:flow_form} with $\theta^*\in
\Omega^G_{\mathbf{u}}$, then, by the proof of Theorem~\ref{thm:equiv-kur-flow},
there exists $\mathbf{x}\in \vect{1}_n^{\perp}$ such that
$B^{\top}\mathbf{x} + 2\pi C_{\Sigma}^{\dagger}\mathbf{u} =
h_{\gamma}^{-1}(\mathcal{A}f^*_{\mathbf{u}})$. This implies that 
\begin{align*}
  \mathbf{x} = L^{\dagger}B\mathcal{A}(h_{\gamma}^{-1}(\mathcal{A}^{-1}f^*_{\mathbf{u}})-2\pi
  C^{\dagger}_{\Sigma}\mathbf{u}).
\end{align*}
By Theorem~\ref{thm:linerization+correspondence}\ref{p1.5:connec-comp}, we can identify $\mathbf{x}$ and $\theta^*$
and thus $\theta^* = L^{\dagger}B\mathcal{A}(h_{\gamma}^{-1}(\mathcal{A}^{-1}f^*_{\mathbf{u}})-2\pi
  C^{\dagger}_{\Sigma}\mathbf{u})$. 
\end{proof}

\subsection{Proof of Theorem~\ref{thm:compcomp}}\label{app:flop}

As a first step in the proof of Theorem~\ref{thm:compcomp}, we prove the following useful lemma. 
\begin{lemma}\label{lem:graph-comp}
Consider an undirected weighted connected graph $G$ with a cycle basis $\Sigma$. Then, for every
  $\mathbf{u}\in \Img(\mathbf{w}_{\Sigma})$, the system of linear
  equations 
\begin{align}\label{eq:shift-equations}
C_{\Sigma}\mathbf{z} = \mathbf{u}, 
\end{align}
can be solved in $\mathcal{O}(nm)$. 
\end{lemma}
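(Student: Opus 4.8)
The plan is to use a spanning tree of $G$ to bring $C_\Sigma$ into a block form for which the system decouples and becomes trivial to solve. First I would compute a spanning tree $\mathcal{T}$ of the connected graph $G$ (by breadth- or depth-first search) in $\mathcal{O}(m)$ time. There are exactly $r:=m-n+1$ non-tree edges and $n-1$ tree edges; after reordering the edges so that the non-tree edges come first, I would write $C_\Sigma=[\,C_N \mid C_T\,]$, where $C_N\in\real^{r\times r}$ collects the non-tree columns and $C_T\in\real^{r\times(n-1)}$ the tree columns. Searching for a solution whose tree components vanish, $\mathbf{z}=(\mathbf{z}_N^\top,\vect{0}_{n-1}^\top)^\top$, reduces the problem to the square system $C_N\mathbf{z}_N=\mathbf{u}$.

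The enabling structural fact is that $C_N$ is invertible. To see this, consider the linear map $\rho:\Ker(B)\to\real^{r}$ sending a cycle vector to its restriction on the non-tree coordinates. If some $v\in\Ker(B)$ vanishes on every non-tree edge, then $v$ is supported on the forest $\mathcal{T}$; but the support of a nonzero element of $\Ker(B)$ must contain a cycle, and $\mathcal{T}$ contains none, so $v=\vect{0}_m$. Hence $\rho$ is injective, and since $\dim\Ker(B)=r$ it is an isomorphism. The rows of $C_N$ are the images under $\rho$ of the basis $\{v_{\sigma_1},\dots,v_{\sigma_r}\}$ of $\Ker(B)$, hence a basis of $\real^{r}$, so $C_N$ is invertible and $C_N\mathbf{z}_N=\mathbf{u}$ has a unique solution.

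To carry out the solve within $\mathcal{O}(nm)$, rather than paying the $\mathcal{O}(r^3)$ cost of a dense factorization of $C_N$, I would perform the computation in the fundamental cycle basis $\Sigma^{f}$ induced by $\mathcal{T}$: adding each non-tree edge $e_i$ to $\mathcal{T}$ creates a unique cycle $\widetilde{\sigma}_i$, and, with orientations chosen so that $\widetilde{\sigma}_i$ traverses $e_i$ positively, $C_{\Sigma^{f}}=[\,I_r \mid M\,]$ because $e_i$ lies on no other fundamental cycle. For this basis the system $C_{\Sigma^{f}}\mathbf{z}=\mathbf{v}$ is solved by inspection as $\mathbf{z}=(\mathbf{v}^\top,\vect{0}^\top)^\top$, and assembling $M$ only requires tracing, for each of the $r$ non-tree edges, the tree path between its endpoints, each of length at most $n$; this costs $\mathcal{O}(rn)=\mathcal{O}(nm)$. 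Since $C_\Sigma=C_N\,C_{\Sigma^{f}}$, a general basis is handled by first re-expressing $\mathbf{u}$ in the fundamental basis; moreover, because the winding partition of $\torus^n$ is independent of the chosen cycle basis (as noted in the comments following Theorem~\ref{thm:linerization+correspondence}), one may simply run the whole computation with $\Sigma^{f}$, for which the solve is immediate.

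The main obstacle is precisely this complexity point rather than existence: a solution (indeed an integer one) is already guaranteed by Lemma~\ref{thm:shift_property}\ref{p3:integer-basis}, but its constructive proof proceeds through a Smith-normal-form argument that is far too expensive. The work therefore lies in showing that the spanning-tree/fundamental-cycle structure collapses the linear algebra to a trivial back-substitution, so that the bottleneck is the $\mathcal{O}(nm)$ construction of the fundamental cycles, not any matrix inversion.
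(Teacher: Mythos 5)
Your overall route is the same as the paper's: fix a spanning tree, pass to the fundamental cycle basis $\Sigma^{f}$ whose cycle--edge matrix carries an identity block on the non-tree columns, and read off a solution supported on the non-tree edges. Your argument that $C_N$ is invertible (restriction of $\Ker(B)$ to the non-tree coordinates is an isomorphism onto $\real^{r}$, $r=m-n+1$) is a clean addition that the paper leaves implicit. The genuine gap is in the complexity accounting for the \emph{given} basis $\Sigma$. The factorization $C_\Sigma=C_N\,C_{\Sigma^{f}}$ correctly reduces $C_\Sigma\mathbf{z}=\mathbf{u}$ to $C_{\Sigma^{f}}\mathbf{z}=C_N^{-1}\mathbf{u}$, but computing $C_N^{-1}\mathbf{u}$ is exactly the $r\times r$ solve $C_N\mathbf{z}_N=\mathbf{u}$ that you set out to avoid; you give no $\mathcal{O}(nm)$ procedure for it, and a generic factorization costs up to $\mathcal{O}(r^{3})$, which exceeds $\mathcal{O}(nm)$ whenever $r$ is large (e.g.\ $r=\Theta(n^{2})$ for dense graphs). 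The paper closes precisely this step by introducing the change-of-basis matrix $R$ with $C_{\Sigma'}=RC_\Sigma$ and asserting that each row of $R$ has at most $n$ nonzero entries, so that forming $R\mathbf{u}$ costs $\mathcal{O}(nm)$; whatever one thinks of how that sparsity is justified, your write-up needs some analogue of it and currently has none.

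Your fallback --- ``just run the whole computation with $\Sigma^{f}$'' --- does not rescue the lemma. The statement is for an arbitrary prescribed $\Sigma$, and in the Flow Network Solver the basis is deliberately a minimum cycle basis because the number of candidate winding vectors enumerated in the for-loop is $\prod_{i}\left\lfloor \gamma n_{\sigma_i}/2\pi\right\rfloor$; replacing $\Sigma$ by a fundamental basis, whose cycles can have length $\Theta(n)$, would inflate that count and break the run-time bound of Theorem~\ref{thm:compcomp}. Basis-independence of the winding \emph{partition} does not help, because the vector $\mathbf{u}$ you are handed is expressed in $\Sigma$-coordinates, and converting it to $\Sigma^{f}$-coordinates is again the map $\mathbf{u}\mapsto C_N^{-1}\mathbf{u}$ whose cost is the issue.
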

\begin{proof}
Let $\mathcal{T}$ be a spanning tree in $G$ and, without loss of
generality, assume that $\mathcal{E}-\mathcal{E}_{\mathcal{T}} =
\{e_1,\ldots, e_{m-n+1}\}$. Then, for every $i\in\{1,\ldots,m-n+1\}$,
we define the cycle $\sigma'_i$ by $\sigma'_i = (i_1,\ldots,i_k,i_1)$, 
where $e_i = (i_1,i_k)$ and $(i_1,\ldots,i_k)$ is the unique simple path
in the spanning tree $\mathcal{T}$ between nodes $i_1$ and $i_k$. Then
one can easily show that $\Sigma' =
\{\sigma'_1,\ldots,\sigma'_{m-n+1}\}$ is a cycle basis for $G$. Since
$\Sigma$ is also a cycle basis for $G$, there exists an invertible matrix
$R=\{r_{ij}\}$ such that 
\begin{align}\label{eq:basis-relations}
v_{\sigma'_i} = \sum_{j=1}^{m-n+1} r_{ji} v_{\sigma_{j}}
\end{align}
Moreover, each cycle in $G$ has at most $n$ edges, it is easy to check that, at
most $n$ entries in each rows of $R$ are non-zero. Using
equations~\eqref{eq:basis-relations}, one can deduce that $C_{\Sigma'}
= R C_{\Sigma}$. This implies that $\mathbf{z}$ is a solution for equations~\eqref{eq:shift-equations} if and only if it is a solution
for $C_{\Sigma'}\mathbf{z} = R \mathbf{u}$. Note that $\mathbf{z} =
[\vect{0}_{n-1}, R \mathbf{u}]^{\top}$  is a solution to
$C_{\Sigma'}\mathbf{z} = R \mathbf{u}$ and therefore a solution to equations~\eqref{eq:shift-equations}. Thus, by computing
$R\mathbf{u}$, one can find the solution to the linear systems of
equations~\eqref{eq:shift-equations}. Note $R\in
\real^{(m-n+1)\times (m-n+1)}$ and in each row of $R$ there is at most
$n$ non-zero element. Thus, the computational complexity of finding
$R\mathbf{u}$ and finding a solution for~\eqref{eq:shift-equations} is $\mathcal{O}(nm)$. 
\end{proof}
Now we go back to the proof of Theorem~\ref{thm:compcomp}. 
\begin{proof}
{\color{black}Regarding part~\ref{p1:numberiter}, since we have
  $|w_{\sigma_i}|\le \left\lfloor{\frac{\gamma
        n_{\sigma_i}}{2\pi}}\right\rfloor$, for every
  $i\in \{1,\ldots,m-n+1\}$. Thus, the total number of times that the
  for-loop at step~\algostep{2} is executed is
  $\prod_{i=1}^{m-n+1}\left\lfloor{\frac{\gamma
        n_{\sigma_i}}{2\pi}}\right\rfloor$.} Regarding part~\ref{p2:compeachiter}, the computational time of the
iterations~\eqref{eq:seq_converge} to check for the existence and/or compute the solution of the flow network problem~\eqref{eq:flow_form} is 
\begin{align}\label{eq:computational}
\left(\mbox{Run time for each iterations}\right) \times \left(\mbox{the
  number of iterations}\right). 
\end{align}
If $\rho\in \real_{>0}$ is the tolerance of the numerical method,
then, using Theorem~\ref{thm:convergence_ifexists}\ref{p2:speed}, the number of iterations is bounded above by 
\begin{align*}
\frac{\log(\rho^{-1})-\log\left(\norm{T_{\mathbf{u}}(f^{(0)})-f^{(0)}}{\Hmin\mathcal{A}}\right)}{\log(\|I_m-\Hmin\Hmax^{-1}\|_{\infty})},
\end{align*} 
which is independent of $n$ and $m$. Therefore, the number of
iterations is {\color{black}$\mathcal{O}(\rho^{-1})$}. Now we investigate the computational
time for each iteration. For the iteration step $k+1$, we need to
compute the terms $f^{(k)}$, $\Hmin\prjcyc
h^{-1}_{\gamma}(\mathcal{A}^{-1}f^{(k)})$, and $\Hmin\prjcyc
C^{\dagger}_{\Sigma}\mathbf{u}$ and then add them together. In what
follows, we analyze the computational complexity of each of these terms. 

\begin{enumerate}
\item  The term $f^{(k)}$ has already been computed from step $k$.

\item\label{eq:cc-0} Let $D\in \real^m$ be a diagonal matrix. then, for every $\eta\in \real^m$, the computational time for
    $D \eta$ is $\mathcal{O}(m)$. 

\item\label{eq:cc-1} Note that the functions $h_{\gamma}$ and $h_{\gamma}^{-1}$ can be computed offline
  and therefore computing $h_{\gamma}^{-1}(\mathcal{A}^{-1}f^{(k)})$ can be done in $\mathcal{O}(m)$.

\item\label{eq:cc-2} For every
$\eta \in \real^m$, we have
\begin{align*}
\prjcyc\eta = \eta -
\Hmin \mathcal{A} B^{\top}(B\Hmin\mcA B^{\top})^{\dagger}B\eta
\end{align*}
In order to compute $\Hmin \mcA B^{\top}(B\Hmin \mcA B^{\top})^{\dagger}B\eta$, we consider the following
equality
\begin{align*}
B^{\top}(B\Hmin\mcA B^{\top})^{\dagger}B\eta = B^{\top}\mathbf{y},
\end{align*}
where $\mathbf{y}$ is the solution to the linear equations
$B\Hmin\mcA B^{\top}\mathbf{y} = B\eta$. Thus computing
$B^{\top}(B\Hmin\mcA B^{\top})^{\dagger}B\eta$ is equivalent to computing $\mathbf{y}$ and then multiplying it by $B^{\top}$. Using the $LU$ decomposition method, one can compute $\mathbf{y}$ in
$\mathcal{O}(n^3)$ time~\cite[\S 3.2]{GHG-CFvL:89}. Moreover, each row in matrix $B^{\top}$ has
exactly $2$ nonzero element. Thus, given $\mathbf{y}$, the term $B^{\top}\mathbf{y}$ can be computed
in $\mathcal{O}(n)$ in time. Therefore, the computational time for
computing the term $\prjcyc\eta$ is in $\mathcal{O}(n^3)$ in time.

\item By parts~\ref{eq:cc-0},~\ref{eq:cc-1}, and~\ref{eq:cc-2}, the computational time for the
  term $\prjcyc \Hmin \mcA h^{-1}_{\gamma}(\mathcal{A}^{-1}f^{(k)})$ is $\mathcal{O}(n^3)$. 
  
\item Since $\Ker(C_{\Sigma})=\Img(B^{\top}) = \Ker(\prjcyc \Hmin \mcA)$,
  then we have
  \begin{align*}
    2\pi \prjcyc\Hmin\mcA C^{\dagger}_{\Sigma}\mathbf{u} = 2\pi
    \prjcyc\Hmin\mcA\mathbf{z},
  \end{align*}
  where $\mathbf{z}$ is the solution of the
  linear system of equations $C_{\Sigma}\mathbf{z} = \mathbf{u}$. By
  Lemma~\ref{lem:graph-comp} $,\mathbf{z}$ can be computed in $\mathcal{O}(mn)$. By parts~\ref{eq:cc-0},~\ref{eq:cc-1},
  and~\ref{eq:cc-2}, $2\pi \prjcyc\Hmin\mcA\mathbf{z}$ can be computed in $\mathcal{O}(n^3)$. 
\end{enumerate}
Therefore, each iteration of the projection iteration can be computed in
at most $\mathcal{O}(n^3)$ and, by equation~\eqref{eq:computational},
the projection iteration~\eqref{eq:seq_converge} can check the
existence/compute the solutions of the flow network problem~\eqref{eq:flow_form} in
$\mathcal{O}(n^3)\times \mathcal{O}(\log(\rho^{-1})) =
\mathcal{O}(\log(\rho^{-1}) n^3)$ time. {\color{black}Regarding part~\ref{p3:compiter}, note that the run
  time of the for-loop in steps~\algostep{2}-\algostep{11} is given by
\begin{align}\label{eq:computational2}
\left(\mbox{Run time for one execution of for-loop}\right) \times \left(\mbox{the
  number of for-loops}\right). 
\end{align}
By part~\ref{p1:numberiter}, the run time for one execution of the
for-loop is $\mathcal{O}(\log(\rho^{-1}) n^3)$. Also, by
part~\ref{p2:compeachiter}, the number of time the for-loop is invoked
is $\prod_{i=1}^{m-n+1}\left\lfloor{\frac{\gamma n_{\sigma_i}}{2\pi}}\right\rfloor$. This means that the run
time for in steps~\algostep{2}-\algostep{11} is
\begin{align*}
\mathcal{O}(\log(\rho^{-1}) n^3) \mathcal{O}\left(\prod_{i=1}^{m-n+1}\left\lfloor{\frac{\gamma
        n_{\sigma_i}}{2\pi}}\right\rfloor\right) =
  \mathcal{O}\left(\log(\rho^{-1}) n^3\left(\tfrac{\gamma}{2\pi}\right)^{m-n+1}
  n_{\sigma_1}\ldots n_{\sigma_{m-n+1}}\right),
\end{align*}
where the last equality holds because $\frac{\gamma
        n_{\sigma_i}}{2\pi} - 1\le \left\lfloor{\frac{\gamma
        n_{\sigma_i}}{2\pi}}\right\rfloor \le \frac{\gamma
        n_{\sigma_i}}{2\pi}$, for every $i\in
  \{1,\ldots,m-n+1\}$. Regarding part~\ref{p4:comptotal}, suppose that
  $\Sigma = (\sigma_1,\ldots,\sigma_{m-n+1})$ is the cycle basis obtained from modified Horton
  algorithm in~\cite{KM-DM:09}. Then the length of $\Sigma$ is bounded above by
  $3(n-1)(n-2)/2$~\cite[Theorem 6]{JDH:87}. Therefore, we have 
\begin{align*}
  n_{\sigma_1}\ldots n_{\sigma_{m-n+1}}\le
  \left(\frac{n_{\sigma_1}+\ldots+n_{\sigma_{m-n+1}}}{m-n+1}\right)^{m-n+1} \le \left(\frac{3(n-1)(n-2)}{2(m-n+1)}\right)^{m-n+1}.
\end{align*}
As a result, by part~\ref{p3:compiter}, the run time of the for-loop
in steps~\algostep{2}-\algostep{11} is $\mathcal{O}\left(\log(\rho^{-1}) n^3\left(\frac{3(n-1)(n-2)}{m-n+1}\right)^{m-n+1}\right)$. Moreover, the run
time of the modified Horton algorithm in~\cite{KM-DM:09} is
$\mathcal{O}(m^2n/\log(n) + n^2m)$ (see
Remark~\ref{rem:cyclebasis}). Thus the result easily follows. }
\end{proof}


\myclearpage

\end{document}